\newtheoremstyle{my theoremstyle}
{1.0em}                    % Space above
    {1.0em}                    % Space below
    {\itshape}                   % Body font
    {}                           % Indent amount
    {\scshape}                   % Theorem head font
    {.}                          % Punctuation after theorem head
    {.5em}                       % Space after theorem head
    {}  % Theorem head spec (can be left empty, meaning ‘normal’)
\newtheoremstyle{dfn}
{1.0em}                    % Space above
    {1.0em}                    % Space below
    {}                   % Body font
    {}                           % Indent amount
    {\scshape}                   % Theorem head font
    {.}                          % Punctuation after theorem head
    {.5em}                       % Space after theorem head
    {}  % Theorem head spec (can be left empty, meaning ‘normal’)
\theoremstyle{my theoremstyle}
   \newtheorem{thm}{Theorem}[section]
   \newtheorem*{thm*}{Theorem}
   \newtheorem{introthm}{Theorem}
   \newtheorem{lem}[thm]{Lemma}
   \newtheorem{prop}[thm]{Proposition}
\theoremstyle{dfn}
   \newtheorem{dfn}[thm]{Definition}
\theoremstyle{remark}   
   \newtheorem{rem}[thm]{{\scshape Remark}}
\renewcommand{\1}{\mathbbm{1}}
\newcommand{\C}{\mathbb{C}}
\newcommand{\Q}{\mathbb{Q}}
\newcommand{\Z}{\mathbb{Z}}
\newcommand{\D}{\Delta}
\renewcommand{\S}{\mathfrak{S}}
\renewcommand{\P}{\mathbb{P}}
\renewcommand{\a}{\alpha}
\renewcommand{\b}{\beta}
\renewcommand{\c}{\gamma}
\newcommand{\e}{\varepsilon}
\renewcommand{\d}{\delta}
\renewcommand{\k}{\kappa}
\renewcommand{\l}{\lambda}
\newcommand{\khat}{\widehat{\k^*}}
\newcommand{\p}{\varphi}
\newcommand{\ol}[1]{\overline{#1}}
\newcommand{\hFF}[5]{{}_{#1}F_{#2}\left({#3\atop#4};#5\right)}
\newcommand{\hF}[3]{F\left({#1\atop#2};#3\right)}
\newcommand{\FA}[4]{F_A^{(#1)}\left({#2\atop #3};#4\right)}
\newcommand{\FB}[4]{F_B^{(#1)}\left({#2\atop #3};#4\right)}
\newcommand{\FC}[4]{F_C^{(#1)}\left({#2\atop #3};#4\right)}
\newcommand{\FD}[4]{F_D^{(#1)}\left({#2\atop #3};#4\right)}
\newcommand{\bmat}[1]{\begin{pmatrix} #1 \end{pmatrix}}
\newcommand{\mat}[1]{\begin{matrix} #1 \end{matrix}}
\begin{document}
\title[Symmetry of hypergeometric functions over finite fields]{Symmetry of hypergeometric functions over finite fields and geometric interpretation}
\author{Akio Nakagawa}
\date{\today}
\email{akio.nakagawa.math@icloud.com}
\address{Faculty of Engineering Department of General Education, Chiba Institute of Technology, 2-1-1 Shibazono,Narashino, Chiba, 275-0023, JAPAN}
\keywords{Hypergeometric functions over finite fields; Character sum; Rational points; Fermat hypersurface, Artin-Schreier curve}
\subjclass{11G25, 11L05, 11T24, 14G05, 33C90}

\maketitle
\begin{abstract}
We begin by defining general (confluent) hypergeometric functions over finite fields and obtaining a finite field analogue of a classical symmetry found in their complex counterparts.
We give a geometric proof for the symmetry by constructing isomorphisms between certain algebraic varieties. 
The numbers of rational points on these varieties are hypergeometric functions over finite fields.
\end{abstract}
\section{Introduction}
Over $\C$, hypergeometric function ${}_mF_n(\l)$ is defined by the power series
$$ \hFF{m}{n}{a_1, \dots,a_m}{b_1,\dots,b_n}{\l} := \sum_{k=0}^\infty \dfrac{\prod_{i=1}^m (a_i)_k}{(1)_k \prod_{i=1}^n (b_i)_k }\l^k,$$
where $a_i,b_i \in \C$ ($b_i \not\in\Z_{\leq0}$) are parameters and $(a)_k$ is the Pochhammer symbol.
In particular, Gauss' hypergeometric function ${}_2F_1(\l)$ and Kummer's hypergeometric function ${}_1F_1(\l)$ are the most well-studied hypergeometric functions.
As multi-variable generalizations of them, Appell-Lauricella hypergeometric functions and Humbert's hypergeometric functions have also been studied (cf. \cite{Slater,Humbert}).
Hypergeometric functions have integral representations (cf. \cite{Slater, K.-S.}).
By extending the representations, the general hypergeometric function $\Phi_\D(\chi;z)$ was defined (cf. \cite{K-H-T}).
Here, $z \in M(d,n;\C)$ is a $d \times n$ matrix, $\D=(N_1,\dots,N_l)$ is a partition of $n$ and $\chi$ is a character of a group $H_\D^\C$, which is isomorphic to $\prod_{i=1}^l (\C^* \times \C^{N_i-1})$.
Gauss', Kummer's, Appell-Lauricella and Humbert's hypergeometric functions can be derived as particular cases.
We call {\it the confluent type} for ${}_mF_n$-functions with $n \neq m-1$, Humbert's functions and general hypergeometric functions with $\Delta \neq (1, \dots, 1)$.
Kimura-Koitabashi \cite{K-K} gave a group $W_\D^\C \subset GL_n(\C)$ of a symmetry for $\Phi_\D(\chi;z)$, and obtained the symmetry $\Phi_\D(\chi {}^tw;z) = \Phi_\D(\chi;zw)$, where $w \in W_\D^\C$ and $\chi{}^tw$ is a suitable character.
Certain transformation formulas (e.g. Pfaff's formula, Euler's formula) for the classical hypergeometric functions can be induced by the symmetry.

Let $q$ be a power of a prime $p$, and let $\k$ be a finite field with $q$ elements.
Over $\k$, analogues of ${}_mF_n$ and Appell-Lauricella functions have been defined and discussed by several authors (e.g. \cite{BCM,CK,FST,FLRST,Greene,He,He2,Katz,Koblitz,Ma,Mc,Otsubo,TB,TSB}).
In this paper, we use Otsubo's definition (see Subsection \ref{subsec. recall for HGF} for the definition, and see \cite[Remark 2.13]{Otsubo} for the relation with other definitions). 
Gel'fand-Graev-Retakh \cite{GGR} defined a finite field analogue of the general hypergeometric function for $\D=(1,\dots,1)$, and obtained its symmetry.
Otsubo's definition is the most convenient to our treatment of confluent hypergeometric functions over $\k$.
Recently, there are some works on confluent hypergeometric functions over $\k$ (\cite{Otsubo, Otsubo-Senoue, N2}).
We will generalize the result of Gel'fand-Graev-Retakh to general partitions (see Section 3).

Through the integral representations, for ${}_{m}F_{m-1}$-functions and Appell-Lauricella functions over $\C$ with $\Q$-parameters, it is known that each of the functions is essentially a complex period of certain hypersurfaces (e.g. \cite{Koblitz, Archinard}).
On the other hand, for the confluent type, it seems that there is no such simple relations because their integral representations have the exponential function.
Over $\k$, the ${}_m F_{m-1}$ and Appell-Lauricella functions can be expressed as the numbers of $\k$-rational points on the hypersurfaces (e.g. \cite{Koblitz,N,Asakura-Otsubo}).
In this paper, we extend these results over $\k$ to both general and confluent hypergeometric functions (see Section 4).
Furthermore, we will give a geometric interpretation for the symmetry and certain transformation formulas of hypergeometric functions over $\k$ (see Section 5).

%Through the integral representations and their finite field analogues, for certain ${}_{m}F_{m-1}$-functions and Appell-Lauricella functions over $\C$ (resp. over $\k$), it is known that each of the functions is essentially a complex period (resp. the number of $\k$-rational points) of certain hypersurfaces (e.g. \cite{Koblitz,Archinard,N, Asakura-Otsubo}).
%On the other hand, for the confluent type, it seems that there is no such simple relations over $\C$ because their integral representations have the exponential function.
%However, over $\k$, we will obtain a relation between confluent type functions and the numbers of rational points on certain algebraic varieties over $\k$ (see Section 4).
%Furthermore, we will give a geometric interpretation for the symmetry and certain transformation formulas over $\k$ (see Section 5).

The aims of the first half of this paper are to define the general hypergeometric function $\Phi_\D(\chi;z)$ over $\k$, and to obtain its symmetry for general $\D$.
More precisely, we define an analogous group $H_\D\cong \prod_{i=1}^l (\k^* \times \k^{N_i-1})$ (see Subsection \ref{subsec. of Def. and Prop.}), and for $z \in M(d,n;\k)$, define the general hypergeometric function $\Phi_\D(\chi;z)$ over $\k$ (see Definition \ref{def. of gen. HGF}), where $\chi$ is a character of $H_\D$.
Furthermore, we define an analogous group $W_\D \subset GL_n(\k)$ of the symmetry, and obtain the following analogous symmetry (see Theorem \ref{sym of Phi}).
\begin{introthm}\label{eq. intro-sym}
For $w \in W_\D$,
 $$\Phi_\D(\chi{}^tw;z) = \Phi_\D(\chi;zw). $$
 Here, see Subsection \ref{subsec. of Def. and Prop.} for the definition of $\chi {}^tw$. 
\end{introthm}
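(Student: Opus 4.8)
The plan is to prove the identity directly from the character-sum definition of $\Phi_\D(\chi;z)$ by a change of summation variable that exactly compensates for the right-multiplication $z \mapsto zw$, at the cost of twisting $\chi$ to $\chi{}^tw$. Writing schematically $\Phi_\D(\chi;z) = \sum_u \chi\big(\ell(u;z)\big)$, where $u$ runs over the relevant finite index set and $\ell(\,\cdot\,;z)$ packages the linear forms built from the columns of $z$ (so that $\chi$ is evaluated through its multiplicative components on the $\k^*$-factors and its additive components on the $\k^{N_i-1}$-factors of $H_\D$), substituting $zw$ for $z$ reorganizes these forms. Since the whole identity lives over a finite set, the main mechanism is that a suitable invertible, $w$-dependent change of variable $u \mapsto u'$ is a bijection of the index set, hence preserves the sum, and the residual discrepancy between the transformed summand and the original is precisely the twist recorded by $\chi{}^tw$. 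This is the finite-field replacement for the integral-plus-quasi-invariance argument of Kimura-Koitabashi over $\C$.

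First I would reduce to generators. Because $W_\D$ is a group, transpose is an anti-homomorphism (${}^t(w_1w_2)={}^tw_2\,{}^tw_1$), and the twist $\chi \mapsto \chi M$ is associative in $M$, the symmetry is multiplicative in $w$: granting it for $w_1$ and $w_2$ yields it for $w_1w_2$ via $\Phi_\D(\chi; z w_1 w_2) = \Phi_\D(\chi {}^t w_2; z w_1) = \Phi_\D((\chi {}^t w_2){}^t w_1; z) = \Phi_\D(\chi {}^t(w_1 w_2); z)$, where the last step uses the compatibility $(\chi{}^tw_2){}^tw_1 = \chi{}^t(w_1w_2)$. It therefore suffices to treat a generating set of $W_\D$, which splits into three natural families: diagonal scalings of the columns, permutations of columns within a single block $N_i$, and the confluent shift matrices coming from the additive factors $\k^{N_i-1}$.

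For the scaling and permutation generators the verification is the finite-field analogue of the complex computation: rescaling a column of $z$ is undone by rescaling the corresponding summation coordinate, producing exactly a multiplicative-character factor matching the relevant $\k^*$-component of $\chi{}^tw$; permuting columns within a block is undone by permuting summation coordinates, and the character components are permuted correspondingly. In both cases bijectivity of the change of variable (on $\k^*$, respectively on the coordinate set) is immediate, and the twist reproduces $\chi{}^tw$ by inspection.

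The hard part will be the confluent shift generators, where the relevant factor of $H_\D$ is additive and the corresponding component of $\chi$ is an additive (Artin-Schreier) character $\psi$. Right-multiplication by such a $w$ adds one column of $z$ into another, so in the sum one must translate an additive summation variable and invoke $\psi(x+y) = \psi(x)\psi(y)$ rather than the multiplicative homogeneity used above. The delicate points are \emph{(i)} checking that these translations leave the index set invariant, so the substitution remains a bijection, and \emph{(ii)} bookkeeping the mixed interaction between the multiplicative components on the $\k^*$-factors and the additive components on the $\k^{N_i-1}$-factors, which is exactly what must assemble into the single twist $\chi{}^tw$. Confirming that this bookkeeping reproduces $\chi{}^tw$ for every confluent generator, and that these generators together with the scalings and permutations exhaust $W_\D$, is the crux of the argument.
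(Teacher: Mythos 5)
Your overall plan---prove the identity by direct manipulation of the defining character sum rather than geometrically---is legitimate, and the paper itself acknowledges this route exists (it states after Theorem \ref{sym of Phi} that a direct computation is possible, but instead deduces the symmetry from Theorems \ref{N of X_D(z)-1} and \ref{N of X_D(z)} \ref{N of X_D(z)-4}: $\Phi_\D(\chi{}^tw;z)=N(X_{\D,z};\chi{}^tw)=N(X_{\D,zw};\chi)=\Phi_\D(\chi;zw)$, via the explicit isomorphism $f_w\colon X_{\D,z}\to X_{\D,zw}$ and its equivariance). However, your proposed mechanism is wrong at its core. The sum is $\Phi_\D(\chi;z)=\sum_{s\in\k^d}\chi(sz)$, and the only summation variable is $s\in\k^d$, which indexes \emph{rows}; right multiplication $z\mapsto zw$ satisfies $s(zw)=(sz)w$ and therefore requires \emph{no} change of summation variable at all. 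Changes of variable $s\mapsto sg^{-1}$ are the mechanism for the $GL_d(\k)$-invariance of Proposition \ref{action of GL and H on Phi} \ref{action of GL and H on Phi-GL}, not for the $W_\D$-symmetry. In particular there is no ``additive summation variable'' to translate in the confluent case: the additive characters are evaluated at $\theta_j(sz^{(i)})$, fixed polynomial functions of $s$, so your ``delicate point (i)'' (invariance of the index set under translations) is a non-issue, while the actual crux goes unmentioned. The correct proof is summand-by-summand: one shows the pointwise identity $(\chi{}^tw)(sz)=\chi(szw)$ for every $s$, whose engine is the polynomial compatibility \eqref{theta mu = theta(mu)}, $(x_0,\theta_1(x),\dots,\theta_{m-1}(x))\mu(y)=(x_0,\theta_1(x'),\dots,\theta_{m-1}(x'))$ with $x'=x\mu(y)$ (together with \eqref{additivity of theta} and $\mu(a)\mu(b)=\mu(c)$, which also justifies your reduction $(\chi{}^tw_2){}^tw_1=\chi{}^t(w_1w_2)$), plus a mere reindexing of the finite product over blocks for the factors $\widetilde P_\sigma$.

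Your analysis of the ``scaling'' generators would moreover produce a conclusion contradicting the statement. You claim rescaling yields ``a multiplicative-character factor matching the relevant $\k^*$-component of $\chi{}^tw$,'' but by definition $(\a,a){}^t\mu(c)=(\a,a{}^t\mu(c)')$: the multiplicative component $\a$ is \emph{never} twisted. This is forced by the structure of $W_\D$: the first column of every $\mu(c)$ is ${}^t(1,0,\dots,0)$, so the leading entry $sz_0^{(i)}$ (on which $\a$ is evaluated) is unchanged, while the diagonal scalings $1,c_1,\dots,c_1^{m-1}$ act only through the weighted homogeneity of the $\theta_j$, twisting the additive parameters by $a\mapsto a{}^t\mu(c)'$. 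Genuine column rescalings by arbitrary constants live in $H_\D$, not $W_\D$, and give the covariance $\Phi_\D(\chi;zh)=\chi(h)\Phi_\D(\chi;z)$ of Proposition \ref{action of GL and H on Phi} \ref{actino of GL and H on Phi-H} instead. So the missing idea is precisely identity \eqref{theta mu = theta(mu)} making the symmetry pointwise in $s$; without it, none of your three generator computations goes through as described.
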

Gauss', Kummer's, Appell-Lauricella and Humbert's functions over $\k$ are equivalent to the function $\Phi_\D(\chi;z)$ with suitable $d,n$ and $\D$ (see Subsection \ref{particular cases}).
Thus, by Theorem \ref{eq. intro-sym}, many transformation formulas can be derived for these functions (see Remarks \ref{rem of Phi(1,1,1,1)}, \ref{rem of Phi(1,1,2)} and \ref{rem of Phi(1,1,...,1)}).
For example, when $d=2, n=4$ and $\D=(1,1,2)$, the function $\Phi_\D(\chi;z)$ is equivalent to Kummer's function $\hFF{1}{1}{\a}{\b}{\l}_\psi$ over $\k$, where $\a,\b$ (resp. $\psi$) are multiplicative (resp. non-trivial additive) characters of $\k$ and $\l \in \k^*$. 
Then, Theorem \ref{eq. intro-sym} induces the following formula due to Otsubo \cite[Theorem 6.1 (i)]{Otsubo}:
\begin{equation}
  \psi(\l)\hFF{1}{1}{\a^{-1}\b}{\b}{\l}_\psi =\hFF{1}{1}{\a}{\b}{-\l}_\psi. \label{Kummer prod}
\end{equation}
%Our definition and the symmetry are generalizations of  hypergeometric functions associated with Grassmannians and their symmetry due to Gel'fand-Graev-Retakh \cite[Section 11]{GGR} (they considered only the case $\D = (1,1,\dots,1)$).

The aim of the second half is to upgrade the symmetry and transformation formulas for hypergeometric functions over $\k$ to isomorphisms among algebraic varieties.
The symmetry and the transformation formulas will be restored by comparing the numbers of $\k$-rational points on the varieties through the isomorphisms.
%There are three steps.
%The first step is to define algebraic varieties over $\k$ whose the numbers of $\k$-rational points are hypergeometric functions over $\k$ including the confluent type.
%Here, we will define the varieties based on the definition of the hypersurfaces due to Asakura-Otsubo.
%To include multi-variable functions and the confluent type functions, we will use Fermat hypersurfaces and Artin-Schreier curves. 
%The second step is to construct isomorphisms among the suitable algebraic varieties, and the third step is to compare the numbers of $\k$-rational points on the varieties using the isomorphisms.
First, we define an affine algebraic variety $X_{\D,z}$ over $\k$ (see Subsection \ref{subsection of X_D,z}), where $z \in M(d,n;\k)$.
There is a natural group action of $H_\Delta$ on the variety $X_{\D,z}$ (see Subsection \ref{subsection of X_D,z}).
Thus, the number $\# X_{\D,z}(\k)$ of $\k$-rational points on $X_{\D,z}$ decomposes into $\chi$-components $N(X_{\D,z};\chi)$ for characters $\chi$ of $H_\D$.
The following is a result, and Theorem \ref{eq. intro-sym} can be derived by (i) and (iii).
\begin{introthm}[Theorems \ref{N of X_D(z)-1}, \ref{isom fw} and \ref{N of X_D(z)} \ref{N of X_D(z)-4}]\label{thmA}$ $
\begin{enumerate}
\item For a character $\chi$ of $H_\D$, we have
$$ N(X_{\D,z};\chi) = \Phi_\D(\chi;z).$$
\item For $w \in W_\D$, we have an explicit isomorphism
$$  X_{\D,z} \longrightarrow X_{\D,zw}. $$
\item For a character $\chi $ of $H_\D$ and $w \in W_\D$, we have
$$ N(X_{\D,z}; \chi{}^t w) = N(X_{\D,zw};\chi). $$
\end{enumerate}
\end{introthm}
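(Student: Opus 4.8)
The plan is to establish the three statements in order, since (i) is a character-sum computation, (ii) is the geometric core, and (iii) follows by combining them.

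For (i), I would begin from the defining equations of $X_{\D,z}$ given in Subsection \ref{subsection of X_D,z} and write the point count $\# X_{\D,z}(\k)$ as a sum over the base variables of the number of solutions in the auxiliary coordinates. These auxiliary coordinates are Fermat-type variables $u_i$ (satisfying equations built from the linear forms attached to the columns of $z$) in the non-confluent directions, together with an Artin-Schreier variable in each confluent direction. Decomposing $\# X_{\D,z}(\k) = \sum_\chi N(X_{\D,z};\chi)$ according to the $H_\D$-action of Subsection \ref{subsection of X_D,z} and extracting a single isotypic component by orthogonality of characters, the solution count in each fiber collapses: summing $\chi$ over the $H_\D$-orbit produces exactly the product of multiplicative characters of the linear forms attached to $z$, together with the additive character in the confluent variables, that appears in the character sum defining $\Phi_\D(\chi;z)$ in Definition \ref{def. of gen. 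HGF}. Matching the two sums term by term yields $N(X_{\D,z};\chi)=\Phi_\D(\chi;z)$.

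For (ii), I would use that $W_\D \subset GL_n(\k)$ is generated by a short list of elementary transformations (block permutations, scalings, and the confluent shift matrices inherited from the Kimura-Koitabashi group $W_\D^\C$). It then suffices to produce the isomorphism on each generator $w$ and check compatibility with the relations. Right multiplication $z \mapsto zw$ sends the linear forms attached to $z$ to those attached to $zw$; the morphism $X_{\D,z}\to X_{\D,zw}$ is the corresponding monomial change in the Fermat coordinates $u_i$ together with an affine change in the base variables (and, in the confluent directions, a compatible shift of the Artin-Schreier variable) chosen so that the defining equations are carried to one another. That this is an isomorphism is seen by writing down the inverse, which is the analogous morphism attached to $w^{-1}\in W_\D$.

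For (iii), the key point is that the isomorphism $f_w$ of (ii) is not $H_\D$-equivariant, but intertwines the two actions through a twist: $f_w(h\cdot x) = ({}^tw\cdot h)\cdot f_w(x)$ for $h\in H_\D$, where the action of ${}^tw$ on $H_\D$ induces on characters precisely the operation $\chi \mapsto \chi{}^tw$ defined in Subsection \ref{subsec. of Def. and Prop.}. Granting this, pulling the $\chi$-isotypic decomposition on $X_{\D,zw}$ back along $f_w$ identifies its $\chi$-component with the $\chi{}^tw$-component on $X_{\D,z}$, which gives $N(X_{\D,z};\chi{}^tw) = N(X_{\D,zw};\chi)$. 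Combined with (i), evaluating both sides as $\Phi_\D(\chi{}^tw;z)$ and $\Phi_\D(\chi;zw)$ recovers the symmetry of Theorem \ref{eq. intro-sym}. The main obstacle I expect lies in (ii) and (iii) for the confluent generators: in the Fermat directions the point count reduces to the standard relation with Jacobi sums and the equivariance twist is a clean transpose, but in the confluent directions the Artin-Schreier variable interacts additively with the shift transformations, so both the construction of $f_w$ and the verification that the twist is exactly $\chi \mapsto \chi{}^tw$ — on the nose, rather than up to an unwanted automorphism — require careful bookkeeping.
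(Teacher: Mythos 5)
Your proposal is correct and follows essentially the same route as the paper: for (i) the paper computes, fiberwise over $s\in\k^d$, the twisted counts $\Lambda_g$ (equivalently, the class $\widetilde\iota([sz])\in G_\D$ of each fiber as a torsor) and pairs with $\chi$; for (ii) it builds $f_w$ exactly as you plan, by splitting $w$ into a block permutation $\widetilde P_\sigma$ and a $W(n_i)^{p_i}$-part and taking the inverse from $w^{-1}$; for (iii) it proves the intertwining $f_w\circ g=\pi_w(g)\circ f_w$ and invokes Lemma \ref{Comparison of N by isom}, then verifies $\pi_w^*\chi'\circ\widetilde\iota=\chi\,{}^tw$ on the nose, exactly the twist you describe. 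Two precisions are worth recording. In (i), literal counting of $\k$-points cannot work (over $\k$ one has $u^q-u=0$, so the Artin--Schreier classes are invisible); the argument must run through the twisted counts $\Lambda_g$ over $\ol\k$, which is what the paper's definition of $N(X;\chi)$ encodes and evidently what you intend. In (ii), the actual $f_w$ is simpler than your anticipated shape: it fixes the base variables $s$, permutes the Fermat coordinates $t_{(i,j)}$, and acts $\k$-linearly on the Artin--Schreier coordinates by $u'_{(i,j)}=u_{(i,\sigma_i(j))}\mu(c_{\sigma_i(j)})'$, using the identity \eqref{theta mu = theta(mu)}; there is no affine change of $s$ and, importantly, no additive shift of the Artin--Schreier variables. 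Additive shifts $u\mapsto u+r(\theta_j(h))$ belong to the $H_\D$-action $R_h$ of Theorem \ref{isom between X_D} \ref{isom Rh}, and they force base extension to $\k_{pN}$ and produce the factor $\chi(h)$ in Theorem \ref{N of X_D(z)} \ref{N of X_D(z)-3}; it is precisely because $f_w$ is defined over $\k$, with no shifts, that part (iii) holds with no extraneous factor, which settles the ``unwanted automorphism'' worry you raise at the end.
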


Secondly, we focus on ${}_mF_n$, Appell-Lauricella and Humbert's functions over $\k$.
For each $F(\l)$ of these functions, we define an affine algebraic variety $X_{F,\l}$ (see Subsections \ref{subsec. of var. for one-variable functions}--\ref{subsec. of var. for Humbert's functions}) using Fermat hypersurfaces and Artin-Schreier curves.
For the variety $X_{F,\l}$, a finite abelian group $G$ acts on it, and each $\chi$-component $N(X_{F,\l};\chi)$ of $\# X_{F,\l}(\k)$ is equal to the corresponding function $F(\l)$, where $\chi$ is a character of $G$ (see Theorems \ref{N of X_mn}, \ref{N of X_Lauricella} and \ref{N of X_Humbert}).
Let $F(\l)$ be one of Gauss', Kummer's, Appell-Lauricella and Humbert's functions, and let $X_{F,\l}$ and $\D$ be the corresponding affine variety and partition, respectively.
For $w \in W_\D$, suppose that Theorem \ref{eq. intro-sym} induces a formula between $F(\l)$ and $F(\l_w)$.
Then, as a fact, the isomorphism in Theorem \ref{thmA} (ii) induces an isomorphism 
$$ X_{F,\l} \otimes \k' \longrightarrow X_{F,\l_w} \otimes \k',$$
where $\k'$ is degree $q-1$ (or $p(q-1)$) extension of $\k$.
Our final result is to write explicitly the isomorphism (see Subsections \ref{subsec. of X22}--\ref{subsec. of X_A}).
The formula between $F(\l)$ and $F(\l_w)$ can be restored by comparing $N(X_{F,\l}; \chi_w)$ and $N(X_{F,\l_w};\chi)$, where $\chi$ is a character of $G$ and $\chi_w$ is a suitable character.

As examples, we take Gauss' and Kummer's cases, which correspond to $\D=(1,1,1,1)$ and $\D'=(1,1,2)$, respectively, in this introduction.
Define varieties ${}_2X_{2,\l}$ and ${}_1X_{2,\l}$ corresponding to Gauss' and Kummer's cases respectively, by the equations
$$ {}_2X_{2,\l} \colon \begin{cases} x_i^{q-1}+y_i^{q-1} = 1 & (i =1,2)\\ \l (x_1x_2)^{q-1} =( y_1y_2)^{q-1}\\ x_1x_2y_1y_2\neq 0, \end{cases} \quad 
{}_1X_{2,\l}\colon \begin{cases} x^{q-1} + y^{q-1} = 1 \\ t^q - t = z^{q-1} \\ \l x^{q-1} = (yz)^{q-1} \\ xyz \neq 0. \end{cases} $$
The groups $G:= (\k^*)^4$ and $G' := (\k^*)^3 \times \k$ act on ${}_2X_{2,\l}$ and ${}_1X_{2,\l}$, respectively.
As a fact, the groups $W_\D, W_{\D'} \subset GL_4(\k)$ are isomorphic to $\S_4$ and $\S_2\times \k^*$, respectively. 
Here $\S_n$ is the symmetric group of degree $n$.
The following theorems are the results.

\begin{introthm}[Theorems \ref{N of X_mn} and \ref{isom between X22 and X22-sigma}]
Let $\sigma \in W_\D$. 
\begin{enumerate}
\item For a character $\chi=(\a, \b, \c, \e)$ of $G$, if $\a\c\neq \e$, we have
$$ N({}_2X_{2,\l}; \chi) = -j(\a,\c)\hFF{2}{1}{\a,\b}{\c^{-1}}{\l},$$
where $\e$ is the trivial multiplicative character and $j(\a,\c)$ is the Jacobi sum.

\item
For a suitable $\l_\sigma \in \k^*$, we have the isomorphism
$$ {}_2X_{2,\l} \otimes \k' \rightarrow {}_2X_{2,\l_\sigma}\otimes \k'\, ;\, (x_1, x_2,y_1,y_2) \mapsto \sqrt[N]{d_\sigma}\big( (x_1, x_2,y_1,y_2) *Q_\sigma \big).$$

\item For a character $\chi $ of $G$, we have
$$ \chi(d_\sigma) N({}_2X_{2,\l};\chi * {}^t Q_\sigma) = N({}_2X_{2,\l_\sigma};\chi).$$
\end{enumerate}
\end{introthm}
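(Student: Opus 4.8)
The plan is to handle the three parts in turn, leaning on the general machinery already in place. For part (i) I would specialize the general point-count Theorem \ref{N of X_mn} to the case $m=n=2$. That theorem already expresses each $\chi$-component of $\#\,{}_2X_{2,\l}(\k)$ as a Jacobi-sum multiple of a ${}_2F_1$-value, so the only work is bookkeeping: matching the four entries of $\chi=(\a,\b,\c,\e)$ to the four parts of $\D=(1,1,1,1)$ and to the upper and lower parameters of $\hFF{2}{1}{\a,\b}{\c^{-1}}{\l}$, and identifying the normalizing Jacobi sum as $j(\a,\c)$. I would check that the hypothesis $\a\c\neq\e$ is exactly what keeps $j(\a,\c)$ in its generic (non-degenerate) range, so that the stated identity holds and the degenerate characters are excluded precisely by this condition.

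For part (ii) I would first realize the Fermat model ${}_2X_{2,\l}$ as a model of the abstract variety $X_{\D,z}$ for a suitable $z=z(\l)\in M(2,4;\k)$, so that the $G=(\k^*)^4$-action corresponds to the $H_\D$-action and $W_\D\cong\S_4$ acts through the $GL_4$-action of Theorem \ref{thmA} (ii) (i.e. \ref{isom fw}). The abstract isomorphism $X_{\D,z}\to X_{\D,z\sigma}$ then has to be transported to the Fermat coordinates and written explicitly in the form $(x_1,x_2,y_1,y_2)\mapsto \sqrt[N]{d_\sigma}\big((x_1,x_2,y_1,y_2)*Q_\sigma\big)$, where $Q_\sigma$ records how $\sigma$ shuffles and recombines the coordinates, $\l_\sigma$ is the induced parameter value, and $d_\sigma$ is the scalar needed to repair the normalizations. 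Since $\S_4$ is generated by adjacent transpositions, it suffices to produce $Q_\sigma$, $d_\sigma$, $\l_\sigma$ for those generators and then check multiplicativity.

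The main obstacle I expect is the explicit verification in (ii) that this map genuinely lands in ${}_2X_{2,\l_\sigma}$: one must confirm that the Fermat normalizations $x_i^{q-1}+y_i^{q-1}=1$ and the relation $\l(x_1x_2)^{q-1}=(y_1y_2)^{q-1}$ are all preserved, with $\l$ sent correctly to $\l_\sigma$. Preserving the right-hand side after a coordinate recombination forces the scaling factor to be an $N$-th root of $d_\sigma$ rather than $d_\sigma$ itself, which is why the map exists only after the base change $\otimes\,\k'$ to the degree $q-1$ extension where $\sqrt[N]{d_\sigma}$ lives; tracking which root is chosen and its interaction with Frobenius is the delicate bookkeeping. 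Finally, for part (iii) I would note that the isomorphism of (ii) is $G$-equivariant up to the twist $({}^tQ_\sigma, d_\sigma)$: the recombination $*Q_\sigma$ conjugates the $G$-action so that the $\chi$-isotypic part of the target matches the $(\chi*{}^tQ_\sigma)$-isotypic part of the source, while the overall scaling by $\sqrt[N]{d_\sigma}$ contributes the factor $\chi(d_\sigma)$; comparing the $\k$-rational $\chi$-components on both sides then yields $\chi(d_\sigma)\,N({}_2X_{2,\l};\chi*{}^tQ_\sigma)=N({}_2X_{2,\l_\sigma};\chi)$. Alternatively, this identity can be read off directly from part (i) together with the symmetry of Theorem \ref{eq. intro-sym} and Theorem \ref{thmA} (iii), which provides a useful cross-check on the explicit data $Q_\sigma$, $d_\sigma$, $\l_\sigma$.
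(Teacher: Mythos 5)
Your proposal is correct and follows essentially the same route as the paper: (i) is the specialization of the character-sum computation in Theorem \ref{N of X_mn} (proved via Proposition \ref{iteration}), (ii) transports the abstract isomorphism $f_\sigma$ of Theorem \ref{isom fw} into Fermat coordinates via a monomial matrix $Q_\sigma$ and an $N$-th-root correction over $\k_N$, and (iii) is exactly the paper's argument via the twisted $G\times{\rm Gal}$-equivariance $h_\sigma\circ(g,e)=\pi_\sigma(g,e)\circ h_\sigma$, Lemma \ref{Comparison of N by isom}, \eqref{N of base change} and $K_{d_\sigma}(F)=d_\sigma$, which produces the factor $\chi(d_\sigma)$ just as you say. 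The one organizational caveat is that ${}_2X_{2,\l}$ is not literally a model of $X_{\D,z}$ (their dimensions differ): the paper instead decomposes ${}_2X_{2,\l}\otimes\k_N$ into components ${}_2X_{2,\l}^\tau$, identifies each with an intermediate projective variety $X_x$ that receives the $W_\D$-action from $X_{\D,z}$ through the projection $\p$, and thereby obtains the closed formula $Q_\sigma=(\theta_0 P_\sigma M+T)\theta^{-1}$ with multiplicativity for all $\sigma$ at once (Lemma \ref{lem of Qsigma}) rather than generator by generator; note also that your cross-check for (iii) via Theorems A and B recovers the identity only for non-degenerate characters.
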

Here, see Subsection \ref{subsec. of X22} for $\l_\sigma \in \k^*$, $d_\sigma \in G$ and $Q_\sigma \in GL_4(\k)$, and see the beginning of Section 5 for the definition of $ *\, Q_\sigma$.

\begin{introthm}[Theorems \ref{N of X_mn} and \ref{N of X12}]
Let $w=(\sigma,c) \in W_{\D'}$, where $\sigma \in \S_2$ and $c \in \k^*$.
\begin{enumerate}
\item For a character $\chi = (\a,\b,\e,\psi)$ of $G'$, if $\a\b \neq \e$, we have
\begin{equation*}
N({}_1X_{2,\l};\chi) = -j(\a,\b)\hFF{1}{1}{\a}{\b^{-1}}{\l}_\psi. \label{eq. in Intro-2} 
\end{equation*}

\item 
For $\l_w := {\rm sgn}(\sigma) c \l$, we have explicitly an isomorphism $($by the similar manner to Theorem C$)$
$${}_1X_{2,\l} \otimes \k'\longrightarrow {}_1X_{2,\l_w} \otimes \k'.$$

\item  For a character $\chi=(\a_1,\a_2,\a_3,\psi)$ of $G'$, 
$$ \chi(e_w) N({}_1X_{2,\l}; \chi_w) = N({}_1X_{2,\l_w};\chi).$$
Here, let $\psi_c(x) = \psi(cx)$ and 
$$(\chi_w,\, e_w) = \begin{cases} \big( (\a_1,\a_2,\a_3,\psi_c),\,  (1,1,c, 0) \big) & ( \sigma = {\rm id}) \\ \big( ((\a_1\a_2\a_3)^{-1}, \a_2,\a_3, \psi_c),\,  (1,-1,c, c\l) \big) & (\sigma \neq {\rm id}).\end{cases}
$$
\end{enumerate}
\end{introthm}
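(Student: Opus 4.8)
The plan is to derive part~(1) from the general point-count theorem, to build the isomorphism in part~(2) by specializing the generic isomorphism of Theorem~\ref{isom fw}, and to obtain part~(3) by tracking the equivariance of that isomorphism. For part~(1) I would specialize Theorem~\ref{N of X_mn} to the case ${}_1F_1$. This amounts to recognizing ${}_1X_{2,\l}$ as the Fermat--Artin-Schreier variety attached to Kummer's function: the Fermat equation $x^{q-1}+y^{q-1}=1$ produces the Jacobi sum $j(\a,\b)$, the Artin--Schreier curve $t^q-t=z^{q-1}$ carries the additive character $\psi$ (the confluent factor), and the relation $\l x^{q-1}=(yz)^{q-1}$ supplies the argument $\l$. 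I would check that $G'=(\k^*)^3\times\k$ (scaling $x,y,z$ and translating $t$) preserves every defining equation --- each involves only $(q-1)$-th powers, hence is $\k^*$-invariant, while the shift $t\mapsto t+a$ is harmless because $a^q=a$ for $a\in\k$ --- and that the hypothesis $\a\b\neq\e$ is exactly the non-degeneracy required by Theorem~\ref{N of X_mn}.

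For part~(2) I would start from the explicit isomorphism $X_{\D',z}\to X_{\D',zw}$ of Theorem~\ref{isom fw}, for the matrix $z$ encoding $\l$ and $w=(\sigma,c)\in W_{\D'}\cong\S_2\times\k^*$, and transport it through the identification of $X_{\D',z}$ with ${}_1X_{2,\l}$ and base change to $\k'$, exactly as in the Gauss case (Theorem~\ref{isom between X22 and X22-sigma}). For $\sigma=\mathrm{id}$ the map should be $(x,y,z,t)\mapsto(x,y,\gamma z,ct)$ with $\gamma\in\k'$ satisfying $\gamma^{q-1}=c$; one verifies that the Fermat equation is untouched, that $(\gamma yz)^{q-1}=c(yz)^{q-1}=c\l\,x^{q-1}$, and that $(ct)^q-ct=c(t^q-t)=(\gamma z)^{q-1}$, so the image lies on ${}_1X_{2,c\l}$ and $\l_w=c\l$. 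For $\sigma\neq\mathrm{id}$ the map must additionally realize the involution $x^{q-1}\leftrightarrow y^{q-1}$ afforded by $x^{q-1}+y^{q-1}=1$; this is what produces the sign $\l_w=\mathrm{sgn}(\sigma)c\l=-c\l$ and, after feeding $z^{q-1}=\l x^{q-1}/y^{q-1}$ into the Artin--Schreier variable, an additive shift of $t$.

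For part~(3) I would use that a bijection of $\k'$-points intertwining the $G'$-action with an automorphism $\rho$ of $G'$ identifies the $\chi$-component of the target with the $(\chi\circ\rho)$-component of the source, while the coordinate normalization contributes a scalar. From the map of part~(2) the automorphism $\rho$ should send $(s_1,s_2,s_3,a)$ to $(s_1,s_2,s_3,ca)$ (together with the swap when $\sigma\neq\mathrm{id}$), so that $\chi\circ\rho=\chi_w$: the twist $\psi\mapsto\psi_c$ comes from $t\mapsto ct$, and the change $\a_1\mapsto(\a_1\a_2\a_3)^{-1}$ comes from the Fermat involution together with the linear relation among the three multiplicative coordinates. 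The factor $\chi(e_w)$ should then arise from the $(q-1)$-th-root scaling and the additive shift, by the same mechanism that yields $\chi(d_\sigma)$ in Theorem~\ref{isom between X22 and X22-sigma}(iii).

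I expect the main obstacle to be the case $\sigma\neq\mathrm{id}$, specifically the last two entries of $e_w=(1,-1,c,c\l)$. The scaling entry $c$ (hence $\a_3(c)$) and the twist $\psi\mapsto\psi_c$ are routine, but the additive component $c\l$ --- the geometric avatar of the factor $\psi(\l)$ in Kummer's formula~\eqref{Kummer prod} --- requires carefully tracking how the substitution $x^{q-1}\mapsto 1-x^{q-1}=y^{q-1}$ propagates through $t^q-t=z^{q-1}=\l x^{q-1}/y^{q-1}$ and forces a translation of $t$ whose $\psi$-value is $\psi(c\l)$. Likewise the combination $\a_1\mapsto(\a_1\a_2\a_3)^{-1}$ must be checked against all three coordinate characters simultaneously rather than one at a time, which is where the bulk of the bookkeeping lies.
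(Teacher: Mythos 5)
Your proposal is correct and takes essentially the same route as the paper: part (1) is the specialization $m=1$, $n=2$ of Theorem \ref{N of X_mn}; part (2) is built, as in Subsection \ref{subsec. of X12}, by conjugating the isomorphism of Theorem \ref{isom fw} through the intermediate variety $X_x$ and the $\bm\tau$-type decomposition, and your explicit maps match the paper's $h_w$ (for $\sigma\neq\mathrm{id}$ the matrix $Q_\sigma$ gives $(x,y,z)\mapsto(\zeta_1 x^{-1},\,\zeta_2 yx^{-1},\,\zeta_3 zx^{-1})$ with $\zeta_1^N=1$, $\zeta_2^N=-1$, $\zeta_3^N=c$, together with $t'=ct+r(c\l)$ --- exactly the involution and the additive shift you predict, the latter forced by $cz^N/x^N=cz^N+c\l$ on the variety via $\l x^N=(yz)^N$); part (3) then follows by the same $G'\times{\rm Gal}$-equivariance mechanism (Lemma \ref{Comparison of N by isom} combined with \eqref{N of base change}) that yields $\chi(d_\sigma)$ in Theorem \ref{isom between X22 and X22-sigma}. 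You also correctly identified the genuinely delicate points, namely the additive entry $c\l$ of $e_w$ (the geometric source of $\psi(\l)$ in \eqref{Kummer prod}) and the twist $\a_1\mapsto(\a_1\a_2\a_3)^{-1}$ coming from pulling back the $(\k^*)^3$-action along the involution.
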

The formula \eqref{Kummer prod} can be restored by (i) and (iii) of Theorem D (with $\sigma \neq {\rm id}$, $\a_1=\a, \a_2 = \b^{-1}, \a_3 = \e$ and $c=1$). 
Similarly, 24 formulas (for analogues of Kummer's 24 solutions) due to Otsubo \cite[Corollary 3.16]{Otsubo} can be restored by Theorem C (i) and (iii).

\section{Preliminaries}
Throughout this paper, for a group $G$, we write $\widehat{G} = {\rm Hom}(G, \C^*)$ for the character group, and let $\d \colon G \rightarrow \{0,1\}$ denote the characteristic function of the identity element.
Let $\e \in \khat$ be the trivial character,  set $\a(0) = 0 $ and put $\ol\a = \a^{-1}$ for all $ \a \in \khat$.
Fix a non-trivial additive character $\psi \in \widehat{\k}$, and for each $a \in \k$, define $\psi_a \in \widehat{\k}$ by $ \psi_a(x) = \psi(ax)$ $( x \in \k ).$
Write $\S_n$ for the symmetric group of degree $n$,  and for $\sigma \in \S_n$, let $ P_\sigma = (e_{\sigma(1)}, \dots, e_{\sigma(n)}) \in GL_n(\Z)$ be the permutation matrix, where $e_i$ is the $i$th standard unit vector.
Write $O_n$ and $I_n$ for the zero and identity matrices, respectively, of size $n$.

\subsection{Recalls for hypergeometric functions over finite fields}\label{subsec. recall for HGF}
For multiplicative characters $\eta,\eta_1,\dots,\eta_n  \in \widehat{\k^*}$ ($n\geq 2$), {\it the Gauss sum} and {\it the Jacobi sum} are 
\begin{align*}
g(\eta)&=-\sum_{x\in \k^*} \psi(x)\eta(x)\ \in\Q(\mu_{p(q-1)}),\\
j(\eta_1,\dots,\eta_n)&=(-1)^{n-1}\sum_{\substack{x_i\in\k^*\\ x_1+\dots+x_n=1}}\prod_{i=1}^n \eta_i(x_i)\ \in\Q(\mu_{q-1}).
\end{align*}
Note that $g(\e)=1$. 
Put $g^\circ(\eta)=q^{\d(\eta)}g(\eta)$. Then, one shows (cf. \cite[Proposition 2.2 (iii)]{Otsubo})
\begin{equation}\label{Gauss sum thm}
g(\eta)g^\circ(\ol{\eta})=\eta(-1)q.
\end{equation}
For $\eta_1,\dots,\eta_n\in\widehat{\k^*}$, we have (cf. \cite[Proposition 2.2 (iv)]{Otsubo})
\begin{equation}\label{J=G}
j(\eta_1,\dots,\eta_n)=
\begin{cases}
\dfrac{1-(1-q)^n}{q}&(\eta_1=\cdots=\eta_n=\e),\vspace{5pt}\\
\dfrac{g(\eta_1)\cdots g(\eta_n)}{g^\circ(\eta_1\cdots\eta_n)}&({\rm otherwise}).
\end{cases}
\end{equation}
As an analogue of the Pochhammer symbol $(a)_n=\Gamma(a+n)/\Gamma(a)$, where $\Gamma$ is the gamma function, put 
\begin{align*}
(\a)_\nu=\dfrac{g(\a\nu)}{g(\a)},\ \ \ \ (\a)_\nu^\circ=\dfrac{g^\circ(\a\nu)}{g^\circ(\a)},
\end{align*}
for $\a,\ \nu\in\widehat{\k^*}$. 
One shows (cf. \cite[Lemma 2.5 (ii)]{Otsubo})
\begin{equation}
(\a)_\nu (\ol\a)_{\ol\nu}^\circ  = \nu(-1). \label{Poch formula}
\end{equation}
%One shows 
%\begin{align}\label{Poch formula}
%(\a)_{\nu\mu}=(\a)_\nu(\a\nu)_\mu,\ \ \ (\a)_{\nu\mu}^\circ=(\a)_\nu^\circ(\a\nu)_\mu^\circ
%\end{align}
%and 
%\begin{equation}
%(\a)_\nu = \nu(-1) \dfrac{1}{(\ol{\a})_{\ol{\nu}}^\circ}. \label{inversion Poch}
%\end{equation}
%If $p \neq 2$, we will use the duplication formulas (cf. \cite[Theorem 3.10 and Corollary 3.11]{Otsubo})
%\begin{equation}
%g(\a^2) = \a(4) \dfrac{g(\a)g(\a\phi)}{g(\phi)},\quad g^\circ(\a^2) = \a(4) \dfrac{g^\circ(\a) g^\circ(\a\phi)}{g(\phi)}\label{dup. form. gauss}
%\end{equation}
%and
%\begin{equation}
%(\a^2)_{\nu^2} = \nu(4) (\a)_\nu (\a\phi)_\nu,\quad (\a^2)_{\nu^2}^\circ = \nu(4) (\a)_\nu^\circ (\a\phi)_\nu^\circ. \label{dup. form. Poch}
%\end{equation}

For $\a_1,\dots,\a_m,\b_1,\dots,\b_{n+1}\in\widehat{\k^*}$ and $\l \in \k$, define
\begin{equation*}
\hF{\a_1,\dots,\a_m}{\b_1,\dots,\b_{n+1}}{\lambda}_\psi=\dfrac{1}{1-q}\sum_{\nu\in\widehat{\k^*}}\dfrac{(\a_1)_\nu\cdots(\a_m)_\nu}{(\b_1)_\nu^\circ\cdots(\b_{n+1})_\nu^\circ}\nu(\lambda) \ \in \Q(\mu_{p(q-1)}).
\end{equation*}
We often omit writing $\psi$ of $F(\cdots)_\psi$.
When $\b_{n+1}=\e$, we use the classical notation:
$$ \hFF{m}{n}{\a_1,\dots,\a_m}{\b_1,\dots,\b_{n}}{\l} := \hF{\a_1,\dots, \a_m}{\b_1,\dots,\b_n,\e}{\l}.$$
Analogues of Gauss' function and Kummer's function are $\hFF{2}{1}{\a, \b}{\c}{\l} = \hF{\a,\b}{\c,\e}{\l}$ and $\hFF{1}{1}{\a}{\c}{\l} = \hF{\a}{\c,\e}{\l}$, respectively.
For $\l \in \k^*$, one shows (cf. \cite[Proposition 2.9 (i) and Corollary 3.4]{Otsubo})
\begin{equation}
\hFF{0}{0}{}{}{\l} = \hF{}{\e}{\l} =  \psi(-\l) \label{0F0 int}
\end{equation} 
and
\begin{equation}
\hFF{1}{0}{\a}{}{\l} = \hF{\a}{\e}{\l} = \ol{\a}(1-\l) \quad (\a\neq\e).\label{1F0 int.}
\end{equation}

For $\bm\l=(\l_1,\dots,\l_n) \in \k^n$, the Appell-Lauricella hypergeometric functions 
\begin{align*}
& \FA{n}{\a\,;\, \b_1,\dots,\b_n}{\c_1,\dots,\c_n ;  \d_1,\dots,\d_n}{\bm\l}_\psi, \quad  \FB{n}{\a_1,\dots,\a_n; \b_1,\dots,\b_n}{\c \, ;\,  \d_1,\dots,\d_n}{\bm\l}_\psi,\\
& \FC{n}{\a \, ;\,  \b}{\c_1,\dots,\c_n \, ;\, \d_1,\dots,\d_n}{\bm\l}_\psi, \quad \FD{n}{\a\, ;\, \b_1,\dots,\b_n}{\c\, ;\, \d_1,\dots,\d_n}{\bm\l}_\psi,
\end{align*}  
over $\k$ are similarly defined (cf. \cite[Subsection 2.5]{Otsubo}).
For example, 
\begin{align*}
 &\FA{n}{\a\,;\, \b_1,\dots,\b_n}{\c_1,\dots,\c_n ;  \d_1,\dots,\d_n}{\bm\l}_\psi
 := \dfrac{1}{(1-q)^n}\sum_{\nu_i \in \khat} \dfrac{ (\a)_{\nu_1\cdots \nu_n} \prod_{i=1}^n (\b_i)_{\nu_i}}{ \prod_{i=1}^n (\c_i)_{\nu_i}^\circ (\d_i)_{\nu_i}^\circ}\prod_{i=1}^n \nu_i(\l_i),\\
 & \FD{n}{\a\, ;\, \b_1, \dots, \b_n}{\c\, ;\, \d_1, \dots, \d_n}{\bm\l}_\psi
  :=  \dfrac{1}{(1-q)^n}\sum_{\nu_i \in \khat} \dfrac{ (\a)_{\nu_1\cdots \nu_n} \prod_{i=1}^n (\b_i)_{\nu_i}}{ (\c)_{\nu_1\cdots \nu_n}^\circ \prod_{i=1}^n (\d_i)_{\nu_i}^\circ}\prod_{i=1}^n \nu_i(\l_i).
\end{align*}
We often omit writing $\psi$.
When $\d_i = \e$ for all $i$, we use the classical notation (we omit writing $\d_1,\dots,\d_n$).
When $n=2$, we use Appell's notation $F_2, F_3, F_4, F_1$ for $F_A^{(2)}, F_B^{(2)}, F_C^{(2)}, F_D^{(2)}$, respectively.
Note that $\c_i$ and $\d_i$ are symmetry in $F_A$ and $F_C$.

\begin{rem}
The functions ${}_mF_{m-1}$ and Appell-Lauricella functions over $\k$ are $\Q(\mu_{q-1})$-valued and independent of the choice of $\psi$ (see \cite[Lemma 2.4 (iii)]{Otsubo}).
\end{rem}

Similarly, we define analogues of Humbert's hypergeometric functions (\cite{Humbert}).
\begin{dfn}
\begin{align*}
& \Phi_1\left( {\a; \b \atop \c; \d_1, \d_2}; \l_1,\l_2\right)_\psi := \dfrac{1}{(1-q)^2}\sum_{\mu,\nu \in \khat} \dfrac{(\a)_{\mu\nu} (\b)_\mu}{(\c)_{\mu\nu}^\circ (\d_1)_\mu^\circ (\d_2)_\nu^\circ}\mu(\l_1)\nu(\l_2),\\
& \Phi_2\left( {\b, \b' \atop \c; \d_1,\d_2}; \l_1,\l_2\right)_\psi := \dfrac{1}{(1-q)^2}\sum_{\mu,\nu \in \khat} \dfrac{(\b)_{\mu} (\b')_\nu}{(\c)_{\mu\nu}^\circ (\d_1)_\mu^\circ (\d_2)_\nu^\circ}\mu(\l_1)\nu(\l_2),\\
& \Phi_3\left( {\b \atop \c; \d_1,\d_2}; \l_1,\l_2\right)_\psi := \dfrac{1}{(1-q)^2}\sum_{\mu,\nu \in \khat} \dfrac{(\b)_{\mu}}{(\c)_{\mu\nu}^\circ (\d_1)_\mu^\circ (\d_2)_\nu^\circ}\mu(\l_1)\nu(\l_2).
\end{align*}
We often omit writing $\psi$.
When $\d_1=\d_2=\e$, we use the classical notation (i.e. we omit writing $\d_1,\d_2$).
\end{dfn}
As a remark, these three functions are particular cases of Kamp\'e de F\'eriet hypergeometric functions over $\k$ (\cite{IKNN}).

\begin{rem}\label{rem of parameters sift}
By the sift of parameters \cite[Proposition 2.10]{Otsubo}, any hypergeometric function can be written by the classical notation.
For example,
$$ \hF{\a_1,\a_2}{\b_1,\b_2}{\l} = \dfrac{(\a_1)_{\ol{\b_2}} (\a_2)_{\ol{\b_2}}}{(\b_1)^\circ_{\ol{\b_2}} (\b_2)^\circ_{\ol{\b_2}}}\ol{\b_2}(\l) \hFF{2}{1}{\a_1\ol{\b_2}, \a_2\ol{\b_2}}{\b_1\ol{\b_2}}{\l}.$$
\end{rem}
%\begin{rem}
%Over $\k$, one-variable hypergeometric functions were defined by Koblitz \cite{Koblitz}, Greene \cite{Greene}, McCarthy \cite{Mc} and Fuselier-Long-Ramakrishna-Swisher-Tu \cite{FLRST} when $m=n+1$, and by Katz \cite{Katz} and Otsubo \cite{Otsubo} in general.
%Appell's functions over $\k$ were defined by Li-Li-Mao \cite{Li-Li-Mao}, He \cite{He}, He-Li-Zhang \cite{He-Li-Zhang} and Ma \cite{Ma}, Tripathi-Barman \cite{TB} and Tripathi-Saikia-Barman \cite{TSB}.
%For general $n$, $F_D^{(n)}$ were defined by Frechette-Swisher-Tu \cite{FST} and He \cite{He}, and $F_A^{(n)}$ were defined by Chetry-Kalita \cite{CK}. Otsubo \cite{Otsubo} gave a definition of all the Lauricella functions.
%See \cite[Remark 2.13]{Otsubo} for the relation between Otsubo's definition and other definitions.
%\end{rem}

\subsection{Discrete Fourier transform}
For a function $f \colon (\k^*)^n \rightarrow \C$, its {\it Fourier transform} $\widehat{f}$ is a function on $(\widehat{\k^*})^n$ defined by
\begin{equation*}
\widehat{f}(\nu_1,\dots,\nu_n) :=\sum_{t_i\in\k^*}f(t_1,\dots,t_n)\prod_{i=1}^n\ol{\nu_i}(t_i).
\end{equation*}
Then, 
\begin{equation}\label{Fourier trans.}
f(\lambda_1,\dots,\lambda_n)=\dfrac{1}{(q-1)^n}\sum_{\nu_i\in\widehat{\k^*}}\widehat{f}(\nu_1,\dots,\nu_n)\prod_{i=1}^n \nu_i(\lambda_i).
\end{equation}
\begin{rem}
For $f(\l) = \hF{\a_1,\dots, \a_m}{\b_1,\dots,\b_n}{\l}$, one shows
$$ \widehat{f}(\nu) = - \dfrac{(\a_1)_\nu \cdots (\a_m)_\nu}{(\b_1)_\nu^\circ \cdots (\b_n)_\nu^\circ}.$$
For Appell-Lauricella and Humbert's functions, their Fourier transforms are similar.
\end{rem}

\begin{prop}\label{iteration}
For $\a, \b, \b_1, \dots, \b_n \in \khat$, we have the following.
\begin{enumerate}
\item 
If $\ol\a \b_1\cdots\b_i \neq \e$, then for each $i = 1, \dots, n$,
\begin{align*}
& \dfrac{j(\ol\a\b_1\cdots\b_i, \ol{\b_1}, \dots, \ol{\b_i})}{(-1)^{i} (q-1)^n} \sum_{\nu_1, \dots, \nu_n \in \khat} \widehat{f}(\nu_1, \dots, \nu_n) \dfrac{(\a)_{\nu_1 \cdots \nu_i}}{(\b_1)_{\nu_1}^\circ \cdots (\b_i)_{\nu_i}^\circ} \prod_{j=1}^n \nu_j(\l_j)\\
& = \sum_{u_1, \dots, u_i \in \k^*} f(\dfrac{\l_1}{u_1}, \dots, \dfrac{\l_i}{u_i}, \l_{i+1}, \dots, \l_n) \ol\a\b_1\cdots\b_i(1-\sum_{j=1}^i u_j)\prod_{j=1}^i\ol{\b_j}(u_j).
\end{align*}

\item If  $ \ol\a\b_1\cdots\b_i \neq \e$, then for each $ i = 1, \dots, n$, 
\begin{align*}
& \dfrac{j(\a\ol{\b_1\cdots\b_i}, \b_1, \dots, \b_i)}{(-1)^{i}(q-1)^n} \sum_{\nu_1, \dots, \nu_n \in \khat} \widehat{f}(\nu_1, \dots, \nu_n) \dfrac{(\b_1)_{\nu_1} \cdots (\b_i)_{\nu_i}} {(\a)_{\nu_1 \cdots \nu_i}^\circ}\prod_{j=1}^n \nu_j(\l_j)\\
& = \sum_{u_1, \dots, u_i \in \k^*} f(\l_1u_1, \dots, \l_iu_i, \l_{i+1}, \dots, \l_n) \a\ol{\b_1\cdots\b_i}(1-\sum_{j=1}^i u_j)\prod_{j=1}^i\b_j(u_j).
\end{align*}

\item If $\a\ol\b\neq\e$, then for each $i = 1, \dots, n$, 
\begin{align*}
& -\dfrac{j(\a, \ol\a\b)}{(q-1)^n}\sum_{\nu_1, \dots, \nu_n \in \khat} \widehat{f}(\nu_1, \dots, \nu_n) \dfrac{(\a)_{\nu_1\cdots \nu_i}}{(\b)_{\nu_1\cdots\nu_i}^\circ} \prod_{j=1}^n \nu_j(\l_j)\\
& = \sum_{u \in \k^*} f(\l_1u , \dots, \l_iu, \l_{i+1},\dots, \l_n) \a(u) \ol\a\b(1-u).
\end{align*}

\item For each $i=1, \dots, n$, 
\begin{align*}
& -\dfrac{g(\ol\a)}{(q-1)^n} \sum_{\nu_1, \dots, \nu_n \in \khat} \widehat{f}(\nu_1, \dots, \nu_n) \dfrac{1}{(\a)_{\nu_1\cdots \nu_i}^\circ} \prod_{j=1}^n \nu_j(\l_j)\\
& = \sum_{u \in \k^*} f(-\dfrac{\l_1}{u}, \dots, -\dfrac{\l_i}{u}, \l_{i+1}, \dots, \l_n) \ol\a(u) \psi(u).
\end{align*}
\end{enumerate}
\end{prop}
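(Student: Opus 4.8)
The plan is to prove all four identities by a single mechanism: start from the right-hand side, insert the Fourier inversion formula \eqref{Fourier trans.} for the occurrence of $f$, interchange the two (finite) sets of finite sums, and thereby reduce everything to a pure character-sum identity. I illustrate with part (i); the rest are identical up to bookkeeping. Substituting \eqref{Fourier trans.} into $f(\l_1/u_1,\dots,\l_i/u_i,\l_{i+1},\dots,\l_n)$ and using $\nu_j(\l_j/u_j)=\nu_j(\l_j)\ol{\nu_j}(u_j)$, after swapping the order of summation the right-hand side becomes
$$\frac{1}{(q-1)^n}\sum_{\nu_1,\dots,\nu_n}\widehat{f}(\nu_1,\dots,\nu_n)\Big(\prod_{j=1}^n\nu_j(\l_j)\Big)\,S,\qquad S=\sum_{u_1,\dots,u_i\in\k^*}\ol\a\b_1\cdots\b_i\Big(1-\sum_{j=1}^i u_j\Big)\prod_{j=1}^i\ol{\b_j\nu_j}(u_j),$$
an inner character sum in the $u$-variables alone.

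Next I would evaluate $S$ as a Jacobi sum. Setting $x_j=u_j$ for $j\leq i$ and $x_{i+1}=1-\sum_{j\leq i}u_j$, the hypothesis $\ol\a\b_1\cdots\b_i\neq\e$ makes the term $x_{i+1}=0$ vanish by the convention $\eta(0)=0$, so by the definition of the Jacobi sum $S=(-1)^i j(\ol{\b_1\nu_1},\dots,\ol{\b_i\nu_i},\ol\a\b_1\cdots\b_i)$. For part (iv) the analogous inner sum is instead a Gauss sum, via $\sum_{u\in\k^*}\eta(u)\psi(u)=-g(\eta)$ straight from the definition of $g$.

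Comparing with the left-hand side, the factors of $(-1)^i$, $(q-1)^{-n}$, and $\widehat{f}(\nu)\prod_j\nu_j(\l_j)$ cancel, and it remains only to verify the multiplicative identity
$$ j(\ol{\b_1\nu_1},\dots,\ol{\b_i\nu_i},\ol\a\b_1\cdots\b_i)=j(\ol\a\b_1\cdots\b_i,\ol{\b_1},\dots,\ol{\b_i})\,\frac{(\a)_{\nu_1\cdots\nu_i}}{\prod_{j=1}^i(\b_j)^\circ_{\nu_j}}. $$
I would prove this by expanding both Jacobi sums through \eqref{J=G}, rewriting the Pochhammer symbols via $(\a)_\nu=g(\a\nu)/g(\a)$ and $(\a)^\circ_\nu=g^\circ(\a\nu)/g^\circ(\a)$, and cancelling the common Gauss factors with the reflection formula \eqref{Gauss sum thm}; the sign values $\eta(-1)$ that accumulate recombine to $\ol{\a\b_1\cdots\b_i}(-1)$ on each side. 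Parts (ii) and (iii) reduce to the same kind of identity (with $\prod_j(\b_j)_{\nu_j}/(\a)^\circ_{\nu_1\cdots\nu_i}$ and $(\a)_{\nu_1\cdots\nu_i}/(\b)^\circ_{\nu_1\cdots\nu_i}$ in place of the Pochhammer ratio), while part (iv) reduces to $\mu(-1)g(\ol{\a\mu})=g(\ol\a)/(\a)^\circ_\mu$ with $\mu=\nu_1\cdots\nu_i$, which is immediate from \eqref{Gauss sum thm}.

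I expect the main obstacle to be the degenerate characters. Formula \eqref{J=G} branches according to whether its arguments are all trivial, and the Gauss sums degenerate when a character equals $\e$ (where $g(\e)=1$ and $g^\circ$ carries the extra factor $q^{\d}$); hence the clean cancellation above is valid only on the generic range of $(\nu_1,\dots,\nu_i)$. To finish I would isolate the exceptional tuples for which one of $\ol{\b_j\nu_j}$, $\ol{\a\mu}$ (or $\a\mu$, $\b\mu$, as appropriate) becomes $\e$ and check the identity on them directly, using the first branch $\frac{1-(1-q)^n}{q}$ of \eqref{J=G} together with \eqref{Poch formula}. This case analysis is routine but is where the care lies.
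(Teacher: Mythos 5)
Your proposal is correct and follows essentially the same route as the paper: the paper computes the Fourier transform of the right-hand side (the inner $u$-sum evaluates to $(-1)^i j(\ol\a\b_1\cdots\b_i,\ol{\b_1\nu_1},\dots,\ol{\b_i\nu_i})\,\widehat{f}(\nu_1,\dots,\nu_n)$) and then applies inversion \eqref{Fourier trans.}, which is exactly your substitute-and-swap argument read in the opposite direction, and it invokes the same reduction via \eqref{J=G} and \eqref{Gauss sum thm}. One simplification: the case analysis you anticipate at the end is unnecessary, since the hypothesis $\ol\a\b_1\cdots\b_i\neq\e$ guarantees that every Jacobi sum appearing has at least one nontrivial argument, so only the second branch of \eqref{J=G} is ever used, and \eqref{Gauss sum thm} and \eqref{Poch formula} hold uniformly for all characters because the $q^{\d}$ corrections are already absorbed into $g^\circ$.
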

\begin{proof}
We prove only (i) (the others can be shown similarly).
If we write $F(\l_1, \dots, \l_n)$ for the right-hand side, then
\begin{align*}
& \widehat{F}(\nu_1, \dots, \nu_n) \\
& = \sum_{u_1, \dots, u_i}\sum_{t_1, \dots, t_n} f(\dfrac{t_1}{u_1}, \dots \dfrac{t_i}{u_i}, t_{i+1}, \dots, t_n) \ol\a\b_1\cdots\b_i(1-\sum_{j=1}^i u_j)\prod_{j=1}^i\ol{\b_j}(u_j)\prod_{j=1}^n \ol{\nu_j}(t_j)\\
& = \sum_{u_1, \dots, u_i}\ol\a\b_1\cdots \b_i(1-\sum_{j=1}^i u_j) \prod_{j=1}^i \ol{\b_j\nu_j}(u_j) \sum_{s_1, \dots, s_n\in\k^*}f(s_1, \dots, s_n) \prod_{j=1}^n \ol{\nu_j}(s_j)\\
& = (-1)^i j(\ol{\a}\b_1\dots\b_i, \ol{\b_1\nu_1}, \dots, \ol{\b_i\nu_i}) \widehat{f}(\nu_1, \dots, \nu_n).
\end{align*}
Here, we put $ s_j = t_j/u_j$ for $1 \leq j \leq i$ and $s_j=t_j$ for $i+1 \leq j \leq n$.
Therefore, by \eqref{J=G} and \eqref{Gauss sum thm}, we have
$$ \widehat{F}(\nu_1, \dots, \nu_n) = (-1)^i j(\ol\a\b_1\cdots\b_i, \ol{\b_1}, \dots, \ol{\b_i})\widehat{f}(\nu_1, \dots, \nu_n) \dfrac{(\a)_{\nu_1 \cdots \nu_i}}{(\b_1)_{\nu_1}^\circ \cdots (\b_i)_{\nu_i}^\circ}.$$ 
Thus, we obtain the proposition by \eqref{Fourier trans.}.
\end{proof}

\section{General hypergeometric functions over finite fields}
\subsection{Definition and properties}\label{subsec. of Def. and Prop.}
Fix a positive integer $m$, and let $\Lambda$ be the shift matrix of size $m$:
$$ \bmat{ 0 & 1      & 0      & \cdots & 0 \\ 
                      & \ddots & \ddots & \ddots & \vdots \\
                      &   & \ddots & \ddots & 0 \\
                      &   &        & \ddots & 1\\
                      &  &  &  & 0}.$$
For $h_0 , \dots, h_{m-1} \in \k$, write
$$ [h_0, \dots, h_{m-1}] =  \sum_{i=0}^{m-1} h_i \Lambda^i.$$
Define the group
$$ J(m) = \left\{ [h_0, \dots, h_{m-1}] \, \middle| \, h_0 \in \k^*, h_1, \dots, h_{m-1} \in \k \right\} \subset GL_m(\k).$$
We recall the explicit form of characters of $J(m)$ analogously to the complex numbers case (cf. \cite{K-H-T}).
For indeterminates $x=(x_0,x_1,\dots)$, define the polynomial $ \theta_i(x) \in \Q[ X_1, \dots, X_i]$, where $X_j := x_j/x_0$, by
$$ \theta_i(x) = \sum_{\substack{k_1,\dots,k_i \geq 0 \\ k_1+2k_2+\cdots+ik_i=i}} (-1)^{k_1+\cdots +k_i-1}\dfrac{(k_1+\cdots +k_i -1)!}{k_1! \cdots k_i!} X_1^{k_1} \cdots X_i^{k_i}.$$
It is known that (cf. \cite[p.417]{K-T})
$$ \log( x_0 + x_1T + x_2T^2 + \cdots ) = \log x_0 + \sum_{i \geq 1} \theta_i(x) T^i,$$
and hence, we have
\begin{align*}
\log( \sum_{i \geq 0} x_iT^i \sum_{i \geq 0} y_iT^i ) = \log(x_0y_0) + \sum_{i \geq 1} (\theta_i(x) + \theta_i(y)) T^i.
\end{align*}
Therefore, for $x=(x_0,x_1,\dots)$ and $y=(y_0,y_1,\dots)$, if we define $z = (z_0,z_1,\dots)$ by 
$$ \sum_i z_iT^i = ( \sum_i x_iT^i ) ( \sum_i y_iT^i ),$$
then 
\begin{equation}
 \theta_i(z) = \theta_i(x) + \theta_i(y). \label{additivity of theta}
\end{equation}
For $h = [h_0, \dots, h_{m-1}] \in J(m)$,  we write $\theta_i(h)$ for $\theta_i(h_0, \dots, h_{i})$.
The following proposition is a finite field analogue of a well-known fact over $\C$.
\begin{prop}\label{char of J(m)}
Suppose that $p \geq m$.
\begin{enumerate}
\item \label{char of J(m)-1} We have the isomorphism
$$ \iota \colon J(m) \longrightarrow \k^* \times \k^{m-1}\, ;\, h \longmapsto (h_0, \theta_1(h), \dots, \theta_{m-1}(h)).$$

\item \label{char of J(m)-2} We have
$$ \widehat{J(m)} = \left\{ (\a,a_1, \dots, a_{m-1}) \, \middle| \, \a \in \khat, a_i \in \k \right\},$$
where 
$$ (\a, a_1, \dots, a_{m-1}) := (\a, \psi_{a_1}, \dots, \psi_{a_{m-1}}) \circ \iota. $$
\end{enumerate}
\end{prop}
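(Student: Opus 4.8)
The plan is to identify $J(m)$ with the unit group of the truncated polynomial ring $\k[T]/(T^m)$ and then read off both statements from the additivity relation \eqref{additivity of theta}, which the preceding computation has already established. Since $\Lambda$ is the nilpotent shift matrix with $\Lambda^m = O_m$, the assignment $T \mapsto \Lambda$ gives a ring homomorphism $\k[T]/(T^m) \to M_m(\k)$ whose image is exactly the commutative ring of matrices $[h_0,\dots,h_{m-1}] = \sum_i h_i \Lambda^i$; restricting to units identifies $J(m)$ with $(\k[T]/(T^m))^*$. In particular, the matrix product $[h_0,\dots,h_{m-1}][h_0',\dots,h_{m-1}']$ corresponds to the product of the associated truncated series, so the group law on $J(m)$ is power-series multiplication modulo $T^m$.

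For (i), I would first check that $\iota$ is a homomorphism into $\k^* \times \k^{m-1}$, the first factor taken multiplicatively and the remaining $m-1$ factors additively. If $h \leftrightarrow x$, $h' \leftrightarrow y$ and $hh' \leftrightarrow z$ under the identification above, then $z_0 = x_0 y_0$ gives multiplicativity in the first coordinate, while \eqref{additivity of theta} gives $\theta_i(z) = \theta_i(x) + \theta_i(y)$ in the remaining coordinates; hence $\iota(hh') = \iota(h)\iota(h')$. This is the step where the hypothesis $p \geq m$ enters: for $1 \leq i \leq m-1$ the rational coefficients of $\theta_i$ have denominators dividing factorials of integers at most $m-1 < p$, so they reduce unambiguously to $\k$, and the relation \eqref{additivity of theta}, an identity over $\Q$, survives reduction mod $p$.

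For bijectivity I would exploit the triangular shape of the $\theta_i$. Writing $X_j = x_j/x_0$, the unique monomial of $\theta_i$ containing $X_i$ is $X_i$ itself with coefficient $1$ (it comes from $k_i = 1$, $k_1 = \cdots = k_{i-1} = 0$, whose coefficient is $(-1)^0\,\tfrac{0!}{1!}=1$); thus $\theta_i(h) = h_i/h_0 + P_i(h_1/h_0,\dots,h_{i-1}/h_0)$ for some polynomial $P_i$. Consequently, for each fixed $h_0 \in \k^*$ the map $(h_1,\dots,h_{m-1}) \mapsto (\theta_1(h),\dots,\theta_{m-1}(h))$ is a triangular polynomial self-map of $\k^{m-1}$ whose $i$-th component recovers $h_i$ from $h_0,h_1,\dots,h_{i-1}$ and the target value, so recursive inversion shows it is a bijection; since $\iota$ is the identity on the $h_0$-coordinate, $\iota$ is a bijective homomorphism. (Equivalently, one may view $\iota$ as $h_0 \times \log$ on $\k^* \times (1 + T\k[T]/(T^m))$, the branch $\exp$ furnishing the inverse, again using $p \geq m$.)

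For (ii), I would transport characters along the isomorphism of (i): $\widehat{J(m)} \cong \widehat{\k^* \times \k^{m-1}} = \widehat{\k^*} \times \widehat{\k}^{\,m-1}$. A character of $\k^*$ is an element $\a \in \khat$, and since $\psi$ is nontrivial the map $a \mapsto \psi_a$ is an isomorphism $\k \xrightarrow{\sim} \widehat{\k}$, so every additive character is $\psi_{a_j}$ for a unique $a_j \in \k$. Pulling back through $\iota$ yields exactly the stated parametrization $(\a,a_1,\dots,a_{m-1}) = (\a,\psi_{a_1},\dots,\psi_{a_{m-1}}) \circ \iota$ with the $a_j$ ranging freely over $\k$. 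The only genuinely delicate points in the whole argument are the legitimacy of reducing the coefficients of $\theta_i$ modulo $p$ and the leading-coefficient computation securing bijectivity; everything else is formal once \eqref{additivity of theta} and the ring identification are in hand.
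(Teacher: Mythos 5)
Your proof is correct, and its skeleton matches the paper's: the homomorphism property of $\iota$ is read off from the additivity relation \eqref{additivity of theta}, and part (ii) is the formal transport of $\widehat{\k^*}\times\widehat{\k}^{\,m-1}$ along $\iota$ together with $\widehat{\k}=\{\psi_a\mid a\in\k\}$. The one genuine divergence is bijectivity. The paper produces an explicit inverse: it introduces the exponential polynomials $p_i(y)=\sum \frac{1}{k_1!\cdots k_i!}\,y_1^{k_1}\cdots y_i^{k_i}$ (summed over $k_1+2k_2+\cdots+ik_i=i$), uses $\exp(y_1T+y_2T^2+\cdots)=\sum_{i\geq 0}p_i(y)T^i$, and checks that $(a_0,a)\mapsto [a_0,\,a_0p_1(a),\dots,a_0p_{m-1}(a)]$ inverts $\iota$ --- exactly the $\exp$-branch you mention only parenthetically. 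You instead argue by triangularity: since $k_i\geq 1$ together with $k_1+2k_2+\cdots+ik_i=i$ forces $k_i=1$ and all other $k_j=0$, one has $\theta_i(x)=X_i+P_i(X_1,\dots,X_{i-1})$ with leading coefficient $(-1)^0\,0!/1!=1$, and recursive inversion of the resulting triangular system gives bijectivity without ever naming the inverse. Your route is more elementary (no appeal to the $p_i$ or to the Macdonald reference), at the cost of not yielding the closed-form $\iota^{-1}$; note, though, that nothing later in the paper actually needs the closed form --- the identity $\theta_j(h^{-1})=-\theta_j(h)$ invoked in the proof of Theorem \ref{isom between X_D} already follows from the homomorphism property alone. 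A further point in your favor is that you discharge the hypothesis $p\geq m$ explicitly: every factorial appearing in the coefficients of $\theta_i$ (and likewise of $p_i$) is of an integer at most $m-1<p$, so the $\Q$-coefficients reduce unambiguously to $\k$ and \eqref{additivity of theta}, an identity over $\Q$, survives reduction mod $p$; the paper assumes $p\geq m$ but never says where it is used.
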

\begin{proof}
\ref{char of J(m)-1} Noting \eqref{additivity of theta}, for $h, h' \in J(m)$, one shows
$$ \iota(hh') = (h_0h_0', \theta_1(h)+\theta_1(h'), \dots, \theta_{m-1}(h)+\theta_{m-1}(h') ) = \iota(h) \iota(h').$$
Therefore, $\iota$ is a homomorphism.
For indeterminates $y=(y_1, y_2,\dots, )$, define a polynomial $p_i(y) \in \Q[y_1, \dots, y_i]$ by
$$ p_i(y) = \sum_{\substack{ k_1,\dots,k_i \geq 0 \\ k_1+2k_2+\cdots + ik_i=i}} \dfrac{1}{k_1! \cdots k_i !} y_1^{k_1} \cdots y_i^{k_i}, \quad p_0(y):=1.$$
Then, it is well-known that (cf. \cite[p.30, Example 11]{Macdonald})
$$ \exp( y_1T + y_2T^2 + \cdots ) = \sum_{i \geq 0} p_i(y)T^i. $$
Thus, if $ y = (\theta_1(x), \theta_{2}(x), \dots )$, then
$$ x_0 \sum_{i \geq 0} p_i(y)T^i = x_0 + x_1T + x_{2}T^{2} + \cdots .$$
On the other hand, if $x = (y_0, y_0p_1(y),  y_0 p_{2}(y), \dots )$, then 
$$ \sum_{i \geq 1} \theta_i (x) T^i =  y_1T + y_{2}T^{2} + \cdots. $$
Therefore, the inverse morphism $\iota^{-1}$ is given by
$$ (a_0, a) \longmapsto [a_0, a_0p_1(a), \dots , a_0p_{m-1}(a)] \quad (a_0 \in \k^*, a \in \k^{m-1}).$$

\ref{char of J(m)-2} This follows from \ref{char of J(m)-1} and the fact $\widehat{\k} = \{ \psi_a \mid a \in \k\}$.
\end{proof}

Let $\D = (N_1, \dots, N_l)$ be a partition of $n$, where $N_1 \leq \cdots \leq N_l$, and suppose that $p \geq N_l$.
Define a group $H_\D \subset GL_n(\k)$ by
$$ H_\D = \{ {\rm diag}(h_1, \dots, h_l) \mid h_i \in J(N_i) \} \cong \prod_{i=1}^l J(N_i).$$
By Proposition \ref{char of J(m)} \ref{char of J(m)-1}, we have the isomorphism
\begin{equation}
 \widetilde\iota \colon H_\D \overset{\cong}{\longrightarrow} \prod_{i=1}^l (\k^* \times \k^{N_i-1}). \label{isom iota tilde}
\end{equation}
Note that the character group is
$$ \widehat{H_\D} = \left\{ \chi := (\chi_1, \dots, \chi_l) \mid \chi_i \in \widehat{J(N_i)} \right\}. $$
Let $z \in M(d,n;\k)$ be a matrix, and write
\begin{equation} 
z = (z^{(1)}, \dots, z^{(l)}), \quad z^{(i)} = (z_0^{(i)}, \dots, z_{N_i-1}^{(i)} ),\label{z rep. 1}
\end{equation}
where $z_j^{(i)}$ are the columns of $z$.
For $s = (s_1, \dots, s_d) \in \k^d$ and $\chi_i \in \widehat{J(N_i)}$, put
$$ \chi_i(sz^{(i)}) = \chi_i([ sz_0^{(i)}, \dots, sz_{N_i-1}^{(i)}]),$$
where if $sz_0^{(i)} = 0$, then $\chi_i(sz^{(i)}) :=0$.
For $\chi = (\chi_1, \dots, \chi_l) \in \widehat{H_\D}$, define
$$ \chi(sz) = \prod_{i=1}^l \chi_i(sz^{(i)}). $$

\begin{dfn}\label{def. of gen. HGF}
Let $\D$ be a partition of $n$ and $\chi \in \widehat{H_\D}$.
For $z \in M(d,n;\k)$, define
$$ \Phi_\D(\chi;z) = \sum_{ s \in \k^d} \chi(sz) \in \ol\Q. $$
We call {\it the general hypergeometric function} {\it over }$\k$ for the function $\Phi_\D(\chi;z)$.
\end{dfn}
\begin{rem}
Gel'fand-Graev-Retakh \cite[Section 11]{GGR} defined hypergeometric functions associated with Grassmannians over finite fields.
Their functions coincide with our functions $\Phi_{(1,\dots,1)}$.
\end{rem}

We see some properties of general hypergeometric functions.
First, we can obtain the following proposition.
\begin{prop}\label{action of GL and H on Phi} Let $\chi \in \widehat{H_\D}$.
\begin{enumerate}
\item \label{action of GL and H on Phi-GL} For $g \in GL_d(\k)$, we have
$$ \Phi_\D(\chi;gz) = \Phi_\D(\chi;z) . $$

\item \label{actino of GL and H on Phi-H} For $h \in H_\D$, we have
$$ \Phi_\D(\chi;zh) = \chi(h) \Phi_\D(\chi;z).$$ 
\end{enumerate}
\end{prop}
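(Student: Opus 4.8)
The plan is to derive both identities directly from the definition $\Phi_\D(\chi;z)=\sum_{s\in\k^d}\chi(sz)$, reading (i) as a change of summation variable and (ii) as a blockwise multiplicativity computation.

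For (i) I would use associativity of the block decomposition: for every $s\in\k^d$ the $i$-th block of $gz$ is $gz^{(i)}$, so $\chi_i\big(s(gz)^{(i)}\big)=\chi_i\big((sg)z^{(i)}\big)$ and hence $\chi(s(gz))=\chi((sg)z)$. Therefore
$$ \Phi_\D(\chi;gz)=\sum_{s\in\k^d}\chi((sg)z). $$
Since $g\in GL_d(\k)$, the map $s\mapsto sg$ is a bijection of $\k^d$; substituting $s'=sg$ turns the right-hand side into $\sum_{s'}\chi(s'z)=\Phi_\D(\chi;z)$. This is the entire argument.

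For (ii) write $h=\mathrm{diag}(h_1,\dots,h_l)$ with $h_i\in J(N_i)$ and decompose $z=(z^{(1)},\dots,z^{(l)})$ into blocks as in \eqref{z rep. 1}, so that $zh=(z^{(1)}h_1,\dots,z^{(l)}h_l)$ and $\chi(s(zh))=\prod_{i=1}^l\chi_i\big(s(z^{(i)}h_i)\big)$. Everything reduces to the blockwise identity
$$ \chi_i\big(s(z^{(i)}h_i)\big)=\chi_i(sz^{(i)})\,\chi_i(h_i), $$
for once this holds one gets $\chi(s(zh))=\chi(h)\chi(sz)$ for each $s$, and summing over $s$ gives the proposition. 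To prove it I would exploit the ring underlying $J(m)$: the matrices $\sum_{j=0}^{m-1}c_j\Lambda^j$ form the commutative ring $R_m$ of upper-triangular Toeplitz matrices (with $J(m)=R_m^\times$), an element of which is determined by its first row, and the first row of $[c_0,\dots,c_{m-1}]$ is $(c_0,\dots,c_{m-1})$. Applying this with the row vector $v=sz^{(i)}\in\k^{N_i}$, the product $[sz^{(i)}]\cdot h_i$ again lies in $R_{N_i}$ and has first row $vh_i=s(z^{(i)}h_i)$, whence
$$ [sz^{(i)}]\cdot h_i=[s(z^{(i)}h_i)]. $$
If $sz_0^{(i)}\neq0$ both sides are units, i.e. lie in $J(N_i)$, and since $\chi_i$ is a character the blockwise identity follows at once. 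If $sz_0^{(i)}=0$, then the coefficient of $\Lambda^0$ in $[s(z^{(i)}h_i)]$ equals $sz_0^{(i)}$ times the (nonzero) coefficient of $\Lambda^0$ in $h_i$, hence $0$, so neither bracket lies in $J(N_i)$ and both $\chi_i(sz^{(i)})$ and $\chi_i(s(z^{(i)}h_i))$ vanish by the convention that sets $\chi_i$ to $0$ off $J(N_i)$; the identity therefore persists.

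The one genuinely delicate point is this compatibility between the ambient matrix product $z^{(i)}h_i$ and the group law of $J(N_i)$, combined with the treatment of the degenerate locus $sz_0^{(i)}=0$: there the bracket leaves $J(N_i)$, so one cannot invoke multiplicativity of $\chi_i$ and must instead argue directly that both sides vanish. Everything else is routine bookkeeping.
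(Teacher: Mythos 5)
Your proof is correct, but it is not the route the paper takes: the paper explicitly declines the direct character-sum computation (remarking only that the proposition ``can be proved by direct computations of the character sums'') and instead derives Proposition \ref{action of GL and H on Phi} geometrically, combining the point-count identity $N(X_{\D,z};\chi)=\Phi_\D(\chi;z)$ of Theorem \ref{N of X_D(z)-1} with the explicit isomorphisms $L_g\colon X_{\D,z}\to X_{\D,gz}$ and $R_h\colon X_{\D,z}\otimes\k'\to X_{\D,zh}\otimes\k'$ of Theorem \ref{isom between X_D} and the equivariance bookkeeping of Theorem \ref{N of X_D(z)} \ref{N of X_D(z)-2}, \ref{N of X_D(z)-3}; for part (ii) that route requires base change to $\k_{pN}$, the Kummer and Artin--Schreier homomorphisms $K_{h_0}$ and $A_{\theta_j(h)}$ on the Galois group, and evaluation of the resulting character at Frobenius to recover the factor $\chi(h)$. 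Your argument is the elementary alternative: part (i) is exactly the substitution $s\mapsto sg$, and your key blockwise identity $[sz^{(i)}]\,h_i=[s(z^{(i)}h_i)]$ --- justified via the commutative Toeplitz ring with unit group $J(N_i)$ and first-row extraction --- is essentially the computation $h\cdot[sz]=[szh]$ that the paper itself performs inside the proof of Theorem \ref{isom between X_D} \ref{isom Rh}, except that you feed it directly into the character sum rather than into the variety. Your treatment of the degenerate locus is also sound: since the $\Lambda^0$-coefficient of $[s(z^{(i)}h_i)]$ is $sz_0^{(i)}h_{(i,0)}$ with $h_{(i,0)}\neq0$, the conditions $sz_0^{(i)}=0$ and $s(z^{(i)}h_i)_0=0$ coincide, both sides vanish under the convention $\chi_i(sz^{(i)}):=0$, and multiplicativity of $\chi_i$ is never invoked outside $J(N_i)$. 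As for what each approach buys: yours is shorter, self-contained, and avoids any field extension or Galois action; the paper's is heavier but uniform --- the same geometric machinery simultaneously proves the symmetry of Theorem \ref{sym of Phi} and upgrades all of these identities to isomorphisms of varieties, which is the purpose of the second half of the paper.
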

This proposition can be proved by direct computations of the character sums, but in this paper, we will give another proof geometrically (see after Theorem \ref{N of X_D(z)}).
%\begin{proof}
%\ref{action of GL and H on Phi-GL} This follows by the change of variables $s \mapsto sg^{-1}$.
%\ref{actino of GL and H on Phi-H} One shows $\chi(szh) = \chi(h)\chi(sz)$ since $\chi$ is a character of $H_\D$. Thus, we obtain the proposition.
%\end{proof}

Secondly, we see an analogue of a symmetry for the general hypergeometric function over $\C$ (cf. \cite{K-K}).
Define the polynomial $\mu_{i,j}(y) \in \Z[y_1, \dots, y_j]$ by
$$ \mu_{i,j}(y) 
= \begin{cases} 
      0 & ( i > j ) \vspace{3pt} \\
      1 & ( i = j = 0 ) \vspace{3pt}\\
      \displaystyle \sum_{\substack{ j_1, \dots, j_i \geq 1 \\ j_1 + \cdots + j_i = j}} y_{j_1} \cdots y_{j_i} & ({\rm otherwise}).
\end{cases}$$
Note that $ \mu_{0,j} = 0$ when $0 <j$.
It is known that (cf. \cite[(4.2)]{K-K}), for an indeterminate $T$,
\begin{equation}
 \sum_{j \geq 0} \mu_{i,j}(y)T^j = ( y_1T + y_2T^2 + \cdots )^i.  \label{eq. of mu(c)}
\end{equation}

For a fixed positive integer $m$, define upper triangular matrices
$$ \mu(y) = (\mu_{i,j}(y))_{0\leq i,j \leq m-1} ,\quad  \mu(y)' = (\mu_{i,j}(y))_{1 \leq i,j \leq m-1}. $$
For indeterminates $x = (x_0, x_1, \dots, x_{m-1})$, we have (\cite[(5.9)]{K-K})
\begin{equation}\label{theta mu = theta(mu)}
 (x_0,\theta_1(x), \dots, \theta_{m-1}(x))\mu(y) = (x_0, \theta(x'), \dots, \theta_{m-1}(x')),
\end{equation}
where $x' = x\mu(y)$ (note that $x'_0 = x_0$ since the first column of $\mu(y)$ is ${}^t(1,0,\dots, 0)$).
Note that the diagonal components of $\mu(y)$ are $1, y_1, \dots, y_1^{m-1}$.
Define the set 
$$ W(m) = \left\{ \mu(c) \mid c = (c_1, \dots, c_{m-1}) \in \k^{m-1}, c_1 \neq 0 \right\} \subset GL_m(\k).$$
One shows $\mu(1,0,\dots,0) = I_m$.
For $a=(a_1, a_2,\dots )$ and $b=(b_1,b_2\dots) $, define $c=(c_1, c_2 , \dots)$ by
$$ c_k = \sum_{j \geq 1} a_j \mu_{j,k}(b).$$
When $i \geq 1$, noting \eqref{eq. of mu(c)}, we have 
\begin{align*}
\sum_{k \geq 1} \mu_{i,k}(c) T^k 
& = \Big( \sum_{k \geq 1} c_k T^k \Big)^i \\
& = \Big( \sum_{j \geq 1} a_j \sum_{k \geq 1} \mu_{j,k}(b) T^k \Big)^i\\
& = \Big( \sum_{j \geq 1} a_j ( b_1T + b_2 T^2 + \cdots)^j \Big)^i\\
& =  \sum_{j \geq 1} \mu_{i,j}(a) (b_1T + b_2T^2 +  \cdots)^j \\
& = \sum_{j \geq 1} \mu_{i,j}(a) \sum_{k \geq 1} \mu_{j,k}(b) T^k 
= \sum_{k \geq 1} \Big( \sum_{j \geq 1} \mu_{i,j}(a) \mu_{j,k}(b) \Big) T^k.
\end{align*}
Hence, for $a=(a_1, \dots, a_{m-1})$ and $b=(b_1, \dots, b_{m-1})$, 
$$ \mu_{i,k}(c) = \sum_{j=1}^{m-1} \mu_{i,j}(a)\mu_{j,k}(b) \quad (1 \leq i,k\leq m-1).$$
Therefore, we have $\mu(a)\mu(b) = \mu(c)$, and hence, $W(m)$ is a subgroup of $GL_m(\k)$.

From now on, we suppose that the partition $\D$ of $n$ is of the following form:
\begin{equation}
\Delta = (\overbrace{n_1, \dots, n_1}^{p_1}, \dots , \overbrace{n_k, \dots, n_k}^{p_k})\quad (n_1 < n_2 < \cdots < n_k\leq p).\label{refinment partition}
\end{equation}
Then, $H_\D = \prod_i J(n_i)^{p_i}$.
For each $i = 1, \dots, k$, define
$$ \mathcal{P}_i = \left\{ \widetilde{P}_\sigma := (E_{\sigma(1)}, \dots, E_{\sigma(p_i)}) \in GL_{n_ip_i}(\k) \middle|\, \sigma \in \S_{p_i} \right\},$$
where
$$  E_j := {}^t (E_{j1}, \dots , E_{jp_i}), \quad E_{jk}=\begin{cases} O_{n_i} &(k \neq j) \\ I_{n_i} & (k=j). \end{cases}$$
If $n_i = 1$, then $\widetilde P_\sigma = P_\sigma$.
Similarly to the permutation matrices, one shows $\widetilde{P}_\sigma \widetilde{P}_{\sigma'} = \widetilde P_{\sigma \sigma'}$ and ${}^t \widetilde P_\sigma = \widetilde P_{\sigma^{-1}} = \widetilde P_\sigma^{-1}$.
Hence, the set $\mathcal{P}_i$ is a subgroup of $GL_{n_ip_i}(\k)$ and is isomorphic to the symmetric group $\S_{p_i}$.
The group $\mathcal{P}_i$ acts on the group
$$ W(n_i)^{p_i} := \left\{{\rm diag}(\mu(c_1), \dots, \mu(c_{p_i})) \middle| \, \mu(c_j) \in W(n_i) \right\} \subset GL_{n_ip_i}(\k)$$
by 
\begin{align*} 
\widetilde{P}_\sigma \cdot {\rm diag}(\mu(c_1), \dots, \mu(c_{p_i})) 
& := \widetilde{P}_\sigma {\rm diag}(\mu(c_1), \dots, \mu(c_{p_i})) \widetilde{P}_\sigma^{-1} \\
& = {\rm diag}(\mu(c_{\sigma^{-1}(1)}), \dots , \mu(c_{\sigma^{-1}(p_i)}) ).
\end{align*}
Define the group
$$ W_\Delta = \prod_{i=1}^k \Big( W(n_i)^{p_i} \rtimes \mathcal{P}_i \Big) \subset GL_n(\k).$$
For a character $(\a, a) \in \widehat{J(n_i)}$ ($a \in \k^{n_i-1}$) and an element $\mu(c) \in W(n_i)$, define
$$ (\a, a){}^t\mu(c) = (\a, a{}^t\mu(c)') \in \widehat{J(n_i)}.$$
Each element $w_i \in W(n_i)^{p_i} \rtimes \mathcal{P}_i $ can be uniquely written as 
$$ w_i = {\rm diag}(\mu(c_1), \dots, \mu(c_{p_i}) ) \widetilde P_\sigma,$$
where $\mu(c_j) \in W(n_i)$ and $\sigma \in \S_{p_i}$.
For a character $\chi_i:=(\chi_{i,1}, \dots, \chi_{i,p_i}) \in \widehat{J(n_i)^{p_i}}$, define
$$ \chi_i {}^tw_i = (\chi_{i,\sigma^{-1}(1)}{}^t\mu(c_{1}), \dots , \chi_{i,\sigma^{-1}(p_i)}{}^t\mu(c_{p_i}) ) \in \widehat{J(n_i)^{p_i}}.$$
Using this component-wise, define $\chi {}^tw =(\chi_i {}^t w_i)_i \in \widehat{H_\Delta}$ for $\chi =(\chi_1,\dots,\chi_k) \in \widehat{H_\Delta}$ and $w = {\rm diag}(w_1,\dots,w_k) \in W_\Delta$.
The following is a finite field analogue of the symmetry of general hypergeometric functions over $\C$ (\cite[Theorem 5.3]{K-K}).
\begin{thm}\label{sym of Phi}
Let  $\chi \in \widehat{H_\Delta}$.
For $w \in W_\Delta$, we have 
$$ \Phi_\Delta (\chi {}^t w ; z) = \Phi_\Delta(\chi ;zw).$$
%In particular, if we put $\mathcal{P} = \prod_{i=1}^k \mathcal{P}_i \subset W_\Delta$, then for $w \in \mathcal{P}$, 
%$$ \Phi_\Delta( \chi w ; zw) = \Phi_\Delta(\chi;z).$$
\end{thm}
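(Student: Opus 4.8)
The plan is to reduce the statement to a single factor of the partition and to a single block. Since $\Phi_\Delta(\chi;z)=\sum_{s\in\k^d}\chi(sz)=\sum_s\prod_i\chi_i(sz^{(i)})$ and both the character $\chi{}^tw$ and the matrix $zw$ are defined block-diagonally with respect to the decomposition $H_\Delta=\prod_i J(n_i)^{p_i}$, it suffices to prove the identity for each index $i$ separately, and then for a single $w_i={\rm diag}(\mu(c_1),\dots,\mu(c_{p_i}))\widetilde P_\sigma$. First I would further factor $w_i$ as the product of the permutation part $\widetilde P_\sigma$ and the ``torus'' part ${\rm diag}(\mu(c_1),\dots,\mu(c_{p_i}))$, and handle the two pieces in turn, so that the general $w$ follows by composing the two cases.

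\emph{The permutation part.}
For $w_i=\widetilde P_\sigma$ the claim is essentially a reindexing. Writing the block columns of $z^{(i)}=(z^{(i,1)},\dots,z^{(i,p_i)})$, right multiplication by $\widetilde P_\sigma$ permutes these blocks, so $sz^{(i)}\widetilde P_\sigma$ has blocks $sz^{(i,\sigma^{-1}(j))}$. On the character side, $\chi_i{}^t\widetilde P_\sigma=(\chi_{i,\sigma^{-1}(1)},\dots,\chi_{i,\sigma^{-1}(p_i)})$ by the definition of $\chi_i{}^tw_i$ with all $c_j$ trivial. Since $\chi_i(sz^{(i)})=\prod_j\chi_{i,j}(sz^{(i,j)})$ is a product over blocks, permuting both the blocks of $z$ and the components of $\chi$ in the same way leaves the product unchanged for each fixed $s$, and hence leaves the sum over $s$ unchanged. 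This gives $\Phi_\Delta(\chi{}^t\widetilde P_\sigma;z)=\Phi_\Delta(\chi;z\widetilde P_\sigma)$.

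\emph{The torus part, which is the crux.}
The essential content is the case $p_i=1$, i.e.\ a single $\mu(c)\in W(n_i)$ acting on one $J(m)$-block (here $m=n_i$). For a fixed $s\in\k^d$ set $x=[sz_0,\dots,sz_{m-1}]\in J(m)$, so that $sz\,\mu(c)$ corresponds to $x':=x\mu(c)$ in the sense of \eqref{theta mu = theta(mu)}. Writing $\chi_i=(\a,a)$ with $a\in\k^{m-1}$, I would compute $\chi_i(x')=(\a,\psi_{a_1},\dots,\psi_{a_{m-1}})(\iota(x'))$ using \eqref{theta mu = theta(mu)}: the identity $(x_0,\theta_1(x),\dots,\theta_{m-1}(x))\mu(c)=(x_0,\theta_1(x'),\dots,\theta_{m-1}(x'))$ shows that the vector $(\theta_1(x'),\dots,\theta_{m-1}(x'))$ is obtained from $(\theta_1(x),\dots,\theta_{m-1}(x))$ by right multiplication by $\mu(c)'$, while the $\k^*$-component $x_0$ is unchanged. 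Pairing against $(\psi_{a_1},\dots,\psi_{a_{m-1}})$ and using additivity of $\psi$ then yields $\chi_i(x')=\a(x_0)\,\psi\big(\sum_j a_j\theta_j(x')\big)=\big(\a,a{}^t\mu(c)'\big)(\iota(x))=(\chi_i{}^t\mu(c))(x)$. Thus $\chi_i(sz\,\mu(c))=(\chi_i{}^t\mu(c))(sz)$ term by term in $s$, and summing over $s\in\k^d$ gives $\Phi_\Delta(\chi{}^t\mu(c);z)=\Phi_\Delta(\chi;z\mu(c))$.

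\emph{Assembling and the main obstacle.}
Combining the two cases componentwise over $i$ and composing (using that $\chi\mapsto\chi{}^tw$ and $z\mapsto zw$ are compatible with the group law in $W_\Delta$, which follows from $\mu(a)\mu(b)=\mu(c)$ and $\widetilde P_\sigma\widetilde P_{\sigma'}=\widetilde P_{\sigma\sigma'}$ established above) gives the theorem for general $w\in W_\Delta$. I expect the main obstacle to be the bookkeeping in the torus part: one must check carefully that the transposition $\mu(c)\mapsto{}^t\mu(c)'$ in the definition of $\chi_i{}^t\mu(c)$ matches exactly the right action on the $\theta$-coordinates produced by \eqref{theta mu = theta(mu)}, including the harmless fact that the $x_0$-coordinate and the first column of $\mu(c)$ play no role (so only the submatrix $\mu(c)'$ enters). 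A minor point to address is the degenerate term $x_0=sz_0=0$, where $\chi_i(sz^{(i)})=0$ by convention; since $\mu(c)$ fixes the first coordinate, these terms vanish on both sides simultaneously and cause no trouble.
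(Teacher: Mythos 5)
Your argument is correct, but it is not the paper's proof: it is precisely the ``direct computation of the character sums'' that the paper mentions immediately after Theorem \ref{sym of Phi} and deliberately omits. The paper instead argues geometrically (after Theorem \ref{N of X_D(z)}): by Theorem \ref{N of X_D(z)-1} both sides are point counts, $\Phi_\D(\chi{}^tw;z)=N(X_{\D,z};\chi{}^tw)$ and $\Phi_\D(\chi;zw)=N(X_{\D,zw};\chi)$, and these agree by Theorem \ref{N of X_D(z)} \ref{N of X_D(z)-4}, which rests on the explicit isomorphism $f_w\colon X_{\D,z}\to X_{\D,zw}$ of Theorem \ref{isom fw} together with Lemma \ref{Comparison of N by isom}. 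Your two key steps reappear there in geometric form: the reduction to a single factor and the factorization of $w_i$ into $\widetilde P_\sigma$ and ${\rm diag}(\mu(c_1),\dots,\mu(c_{p_i}))$ is exactly how Theorem \ref{isom fw} is proved, and your pairing computation $\sum_j a_j\theta_j(x')=\langle a\,{}^t\mu(c)',\theta(x)\rangle$ via \eqref{theta mu = theta(mu)} is the same bookkeeping as the paper's identification $\pi_w^*\chi'\circ\widetilde\iota=\chi{}^tw$ (using $\psi_a(xM)=\psi_{a\,{}^tM}(x)$) in the proof of Theorem \ref{N of X_D(z)} \ref{N of X_D(z)-4}. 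Your route is shorter and self-contained; the paper's route costs the construction of $X_{\D,z}$ but buys the variety-level statements (the isomorphism $f_w$ and the comparison of $\chi$-components of point counts) that drive the entire second half of the paper, with the symmetry falling out as a one-line corollary.

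One index in your permutation step is off: the $j$-th block of $z^{(i)}\widetilde P_\sigma$ is $sz^{(i,\sigma(j))}$, not $sz^{(i,\sigma^{-1}(j))}$ (compare the proof of Theorem \ref{isom fw}, where $zw=(z^{(\sigma(1))},\dots,z^{(\sigma(m))})$). The identity still holds, but only because the two permutations are inverse to each other rather than ``the same'': the summand of $\Phi_\D(\chi{}^t\widetilde P_\sigma;z)$ is $\prod_j\chi_{i,\sigma^{-1}(j)}(sz^{(i,j)})$, and reindexing $j\mapsto\sigma(j)$ turns it into $\prod_j\chi_{i,j}(sz^{(i,\sigma(j))})$, the summand of $\Phi_\D(\chi;z\widetilde P_\sigma)$. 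With your stated indexing (blocks permuted by $\sigma^{-1}$ on both sides) the products would not match for general $\sigma$. This is a slip of bookkeeping, not of method; the rest of the proposal, including the treatment of the degenerate terms $sz_0^{(i)}=0$ and the composition of the two cases via $(\chi{}^t\widetilde P_\sigma){}^t\mu = \chi{}^t(\mu\widetilde P_\sigma)$, is sound.
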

We can prove this theorem by direct computations of the character sums, but in this paper, we will give another proof geometrically (see after Theorem  \ref{N of X_D(z)}).
\begin{rem}
For $\D=(1,\dots,1)$, note that $W_\D=\{ P_\sigma \mid \sigma \in \S_n\}$ and the formulas in Proposition \ref{action of GL and H on Phi} and Proposition \ref{sym of Phi} are given by Gel'fand-Graev-Retakh \cite[Section 11]{GGR}.
\end{rem}

\subsection{Particular cases}\label{particular cases}
In this subsection, we see particular cases.
When $(d,n) = (2,4)$, the general hypergeometric functions can be written in terms of one-variable functions ${}_2F_1, {}_1F_1$ and ${}_0F_1$, and the symmetry induces certain well-known formulas for each of these functions over $\k$ (see Remark \ref{rem of Phi(1,1,1,1)} and \ref{rem of Phi(1,1,2)}).
When $(d,n)=(2,5)$, the general hypergeometric functions can be written by Appell's $F_1$ and Humbert's functions, and the symmetry induces certain formulas for each of these functions over $\k$ (see Remark \ref{rem of Phi(1,1,...,1)}).

For $z \in M(d,n;\k)$ and a fixed partition $\D$ of $n$, we denote $z \sim z'$ when $z' = gzh$ for some $g \in GL_d(\k)$ and $h \in H_\Delta$.

\subsubsection{$k=2, n=4$}
Over the complex numbers, the following confluent diagram for classical special functions is known (cf. \cite[Figure 1 and Subsection 2.1]{Ohyama}):
\begin{equation*}
\begin{diagram}
\node{} \node{} \node{{}_0F_1} \node{}\\
\node{{}_2F_1} \arrow{e,t}{\rm c.o.} \node{{}_1F_1} \arrow{ne,l}{\rm c.o.} \arrow{se,r}{\rm c.o.} \node{} \node{\mbox{Airy}}\\
\node{} \node{} \node{\mbox{Hermite-Weber}} \arrow{ne,r}{\footnotesize{\mbox{c.o.}}} \node{}
\end{diagram}
\end{equation*}
Here, c.o. means a limit operation called the confluent operation.
Their functions come from general hypergeometric functions $\Phi_\Delta(z)$ over $\C$ with $z \in M(2,4;\C)$.
The functions ${}_2F_1, {}_1F_1, {}_0F_1$, Hermite-Weber and Airy correspond to $\Delta=(1,1,1,1)$, $(1,1,2), (2,2), (1,3)$ and $(4)$, respectively.

Let us see an analogous correspondence for the first three functions over $\k$  as follows. 
Put , for $z=(z_{ij}) \in M(2,n;\k)$,
$$\quad [i\, j]:= \det\bmat{z_{1i}& z_{1j}\\ z_{2i}&z_{2j}}.$$
First, let $\Delta=(1,1,1,1)$.
We have 
$$z \sim z'= \bmat{1&1&1&0 \\ -1&-\l&0&1} \quad (\l \in \k^*),$$
when $[i\, 3], [i\, 4], [3\,4]\neq 0$ for $i =1,2$.
For $\chi = (\a_1,\a_2,\a_3,\a_4) \in \widehat{H_\Delta}$ ($\a_i \in \khat$ with $\a_1,\a_2 \neq \e$), we have
\begin{align*}
\Phi_\Delta(\chi;z') 
& = \sum_{s \in \k^2} \a_1(s_1-s_2)\a_2(s_1-\l s_2)\a_3(s_1)\a_4(s_2)\\
& = \sum_{s_2' \in \k^*} \a_2(1-\l s_2') \a_4(s_2') \a_1(1-s_2') \sum_{s_1} \a_1\a_2\a_3\a_4(s_1)\\
& = \delta(\a_1\a_2\a_3\a_4)(q-1) \sum_{s_2'} \a_2(1-\l s_2') \a_4(s_2') \a_1(1-s_2')\\
& = -\delta(\a_1\a_2\a_3\a_4)(q-1) j(\a_1,\a_4) \hFF{2}{1}{\ol{\a_2}, \a_4}{\a_1\a_4}{\l}.
\end{align*}
Here, we put $s_2' = s_2/s_1$ and, at the last equality, we used \eqref{1F0 int.} and Proposition \ref{iteration} (ii).
\begin{rem}\label{rem of Phi(1,1,1,1)}
In this case, $W_\D = \{ P_\sigma \mid \sigma \in \S_4\}$.
Suppose $ [i\, j] \neq 0 \ (i \neq j)$.
The symmetry in Theorem \ref{sym of Phi} is
$$ \Phi_\D(\chi {}^tP_\sigma;z') = \Phi_\D(\chi; z'P_\sigma) \quad (\sigma \in \S_4),$$
where $\chi=(\a_i)_i \in \widehat{H_\D}$ and $\chi {}^tP_\sigma = (\a_{\sigma^{-1}(i)})_i$.
This symmetry induces a transformation formula between ${}_2F_1(\l)$ and ${}_2F_1(\l_\sigma)$, where $z'P_\sigma \sim \bmat{1 & 1 & 1 & 0 \\ -1 & -\l_\sigma & 0 & 1}$.
As a fact, the transformation formulas are relations \cite[Corollary 3.16]{Otsubo} among 24 ${}_2F_1$-functions over $\k$.
Indeed, all relations in \cite[Corollary 3.16]{Otsubo} were obtained by compositions of three formulas \cite[Theorems 3.14 and 3.15]{Otsubo}, which are induced by the symmetry for $\sigma = (1\, 3)(2\, 4), (1\, 4)$ and $(1\, 3)$ and the shift of parameters.
\end{rem}
%For example, when $\sigma = (1\, 3) \in \S_4$, then the formula above induces
%\begin{equation}
% \hF{\a_1, \a_2}{\a_1\a_2\b_1, \ol{\b_2}}{\l} = \a(-1)\dfrac{j(\a_1,\b_1)}{j(\a_1, \ol{\a_1\a_2\b_1})} \b_2\Big( \dfrac{\l}{\l-1} \Big)\hF{\a_1,\a_2}{\ol{\b_1},\ol{\b_2}}{1-\l}, \label{example of Kummer 24}
%\end{equation}

Secondly, let $\Delta=(1,1,2)$ and suppose $[i\, j], [3\, 4] \neq0$ for $1 \leq i \neq  j \leq 3$. 
We have $z \sim z'=\bmat{-1 & 1 & 0 & -\l \\ 1 & 0 & 1 & 0}$ ($\l \in \k^*$).
For $\chi = (\a_1,\a_2,\a_3,a) \in \widehat{H_\Delta}$ ($\a_i \in \khat, a \in \k$ with $\a_1 \neq \e$), we have
\begin{align*}
\Phi_\Delta(\chi;z') 
& = \sum_{s \in \k^2} \a_1(s_2-s_1) \a_2(s_1) \a_3(s_2) \psi_a(-\l s_1/s_2)\\
& = \delta(\a_1\a_2\a_3)(q-1) \sum_{s_1'} \psi( -a\l s_1')  \a_2(s_1') \a_1(1-s_1') \\
& = -\delta(\a_1\a_2\a_3)(q-1) j(\a_1,\a_2) \hFF{1}{1}{\a_2}{\a_1\a_2}{a\l}_\psi.
\end{align*}
Here, we put $s_1' = s_1/s_2$ and used \eqref{0F0 int} and Proposition \ref{iteration} (ii).
\begin{rem}\label{rem of Phi(1,1,2)}
We have 
$$ W_\D = \left\{ w_{\sigma, c}:= \bmat{P_\sigma & O_2 \\ O_2 & \mu(c)} \in GL_4(\k)\middle|\, \sigma \in \S_2, c \in \k^*\right\},\quad \mu(c) = \bmat{1 & 0 \\ 0 & c}.$$
When $\sigma$ is the permutation $(1\,2)$, the symmetry $\Phi_\D(\chi {}^tw_{\sigma,c};z') = \Phi_\D(\chi  ; z' w_{\sigma, c})$ induces an analogue \eqref{Kummer prod} of Kummer's first product formula.
Here, note that $\chi {}^t w_{\sigma,c}= (\a_{\sigma(1)}, \a_{\sigma(2)}, \a_3, ca)$.
\end{rem}
%(cf. \cite[Theorem 6.1 (i)]{Otsubo}):
%\begin{equation}
% \hFF{1}{1}{\a}{\b}{\l} = \psi(-\l)\hFF{1}{1}{\ol{\a}\b}{\b}{-\l}. \label{Kummer prod}
%\end{equation}
Finally, we consider the case when $\Delta=(2,2)$.
For $z = \bmat{1 & 0 & 0 & \l \\ 0 & -1 & 1 & 0}$ and $\chi = (\a_1, a_1, \a_2, a_2) \in \widehat{H_\Delta}$, we have (putting $s_2' = -a_1s_2/s_1$)
\begin{align*}
\Phi_{\Delta}(\chi; z) 
& = \sum_{s}\a_1(s_1)\psi_{a_1}(-s_2/s_1) \a_2(s_2)\psi_{a_2}(\l s_1/s_2)\\
& = \ol{\a_2}(-a_1)\sum_{s_2'} \psi(-a_1a_2 \l/ s_2') \a_2(s_2') \psi( s_2') \sum_{s_1} \a_1\a_2(s_1)\\
& = \d(\a_1\a_2)(q-1) \a_1(-a_1)g(\ol{\a_1}) \hFF{0}{1}{}{\a_1}{a_1a_2\l}.
\end{align*}
Here, we used \eqref{0F0 int} and Proposition \ref{iteration} (iv).

\subsubsection{$k=2, n\geq 5$}
%Over the complex numbers, the following diagram is known (cf. \cite[Subsection 7.2]{K-K}):
%\begin{equation*}
%\begin{diagram}
%\node{} \node{\Phi_1\left( {a; b \atop c}; x,y \right)} \arrow{se,l}{\vec a} \node{} \\
% \node{F_1\left( {a ; b, b' \atop c} ; x,y \right)} \arrow{ne,l}{\vec b'}  \arrow{se,r}{\vec a} \node{} \node{\Phi_3\left( {b \atop c}; x,y\right).} \\
%\node{} \node{ \Phi_2\left( {b,b' \atop c}; x,y \right)} \arrow{ne,r}{\vec b'} \node{}
%\end{diagram}
%\end{equation*}
%Here, $\Phi_i(x,y)$ are Humbert's confluent hypergeometric functions \cite{Humbert} and $\vec{a}$ means the confluent operation with $a \to \infty$.
%Their functions are particular cases of general hypergeometric functions $\Phi_\Delta(z)$ over $\C$ with $z \in M(2,5;\C)$.
Over $\C$, the function $F_1$ corresponds to $\Delta=(1,1,1,1,1)$, Humbert's functions $\Phi_1$ and $\Phi_2$ correspond to $\Delta= (1,1,1,2)$ and $\Phi_3$ corresponds to $\Delta = (1,2,2)$.
Let us see a finite field analogue of these correspondence.

First, let $\Delta = (1,\dots,1)$ and suppose $[i\, 4], [i\, 5], [4\, 5] \neq0$ for all $i=1,2,3$.
We have $z \sim z' =  \bmat{1&1&1&1&0 \\ -1 & -x & -y & 0 & 1}$, where $x,y\in\k^*$.
Then, for $\chi=(\a_1, \dots, \a_5) \in \widehat{H_\D}$ with $\e \not \in \{ \a_1,\a_2,\a_3\}$, we have
\begin{align*}
\Phi_\Delta(\chi; z') 
& = \sum_{s \in \k^2} \a_1(s_1-s_2) \a_2(s_1-x s_2) \a_3(s_1-ys_2)\a_4(s_1)\a_5(s_2)\\
& = \delta(\a_1\cdots \a_5) (q-1) \sum_{s_2'} \a_2(1-xs_2')\a_3(1-ys_2') \a_5(s_2') \a_1(1-s_2')\\
&  = -\delta(\a_1\cdots \a_5) (q-1) j(\a_1, \a_5) F_1\left( { \a_5 ; \ol{\a_2}, \ol{\a_3} \atop \a_1\a_5}; x,y \right).
\end{align*}
Here, we put $s_2' = s_2/s_1$ and used \eqref{1F0 int.} and Proposition \ref{iteration} (iii).

Secondly, let $\Delta = (1,1,1,2)$ and suppose $[i\,  j], [4\, 5] \neq0$ for $1 \leq i \neq j \leq 4$.
We have $z \sim z'=\bmat{ -1& -x & 1 & 0 & -y\\ 1 & 1 & 0 & 1 & 0}$.
Therefore, using \eqref{1F0 int.}, \eqref{0F0 int} and Proposition \ref{iteration} (iii), we have, for $\chi=(\a_1, \a_2, \a_3, \a_4,a) \in \widehat{H_\D}$,
\begin{align*}
\Phi_\Delta(\chi; z')
& = \sum_{s \in \k^2} \a_1(s_2-s_1) \a_2(s_2- xs_1) \a_3(s_1) \a_4(s_2) \psi_a( -ys_1/s_2)\\
& = \delta(\a_1\a_2\a_3\a_4)(q-1) \sum_{s_1'}  \a_2(1-xs_1') \psi(-ays_1') \a_3(s_1')\a_1(1-s_1')\\
%& = \delta(\a_1\a_2\a_3\a_4)(q-1) \sum_{s_2'} \hFF{1}{0}{\ol{\a_2}}{}{xs_2'} \hFF{0}{0}{}{}{ays_2'} \a_3(s_2')\a_1(1-s_2')\\
& = -\delta(\a_1\a_2\a_3\a_4)(q-1) j(\a_1,\a_3) \Phi_1\left( {\a_3; \ol{\a_2} \atop \a_1\a_3}; x, ay\right).
\end{align*}
On the other hand, we also have $ z \sim z''  = \bmat{1 & 1& 1& 0 & 1\\0 & x' & y' & 1 & 0}.$
Then, similarly we have
$$ \Phi_\Delta(\chi; z'') = \delta(\a_1\a_2\a_3\a_4)(q-1)g(\ol{\a_4}) \Phi_2\left( {\ol{\a_2}, \ol{\a_3} \atop \a_4}; ax',ay' \right).$$
Since $z' \sim  z''$, the function $\Phi_2$ is essentially equal to $\Phi_1$ by Proposition \ref{action of GL and H on Phi}.
%As a remark, over finite fields, the functions $\Phi_1$ and $\Phi_2$ are essentially equal by 
%$$ \Phi_1\left( {\a;\b \atop \c} ; x,y \right) = $$

Finally, let $\Delta = (1,2,2)$.
For $z =\bmat{1 & 1 & 0 & 0 & 1\\x & 0 & y & 1 & 0 }$ and $\chi=(\a_1, \a_2 a_1, \a_3,a_2) \in \widehat{H_\D}$, we have
\begin{align*}
\Phi_\Delta(\chi; z) = \d(\a_1\a_2\a_3) (q-1) g(\ol{\a_3}) \Phi_3\left( { \ol{\a_1} \atop \a_3}; a_2x, a_1a_2y \right).
\end{align*}

\begin{rem}\label{rem of Phi(1,1,...,1)}
For $\D=(1,1,1,1,1)$, recall $W_\D=\S_5$.
When $[i\, j]\neq0$ for all $i \neq j$, the symmetry $\Phi_\D(\chi{}^tP_\sigma;z) = \Phi_\D(\chi; zP_\sigma)$, where $\sigma \in \S_5$, induces $120$ transformation formulas for Appell's $F_1$ over $\k$.
The author has checked some of them, and it seems that these formulas are relations among finite field analogues of $60$ $F_1$-functions written in \cite[(110)--(121)]{Vidunas} and the trivial relation $ F_1\left( { \a\, ;\, \b_1,\b_2 \atop \c}; x,y \right) = F_1\left( { \a\, ;\, \b_2, \b_1 \atop \c}; y,x \right)$.
Some of the formulas are essentially obtained by Li-Li-Mao \cite[Theorems 3.2 and 3.3]{Li-Li-Mao}.
For $\D=(1,1,1,2)$ and $ (1,2,2)$, the symmetry $\Phi_\D(\chi {}^tw ; z) = \Phi_\D(\chi; zw)$, where $w \in W_\D$, induces transformation formulas for Humbert's functions over $\k$, which have not been obtained as far as the author knows.
\end{rem}

\begin{rem}
When $d =2, n \geq 5$ and $\D=(1, \dots, 1)$, one shows that if $[i\, j] , [n-1\, n]\neq 0$ for $i \in \{1, \dots, n-2\}$ and $  j\in\{n-1,n\}$, then  
$$ z \sim z' = \bmat{ 1 & 1 & 1 & \cdots & 1 & 1 & 0 \\ 
                     -1 & -\l_1 & -\l_2 & \cdots & -\l_{n-3} & 0 & 1} \quad (\l_i \in \k^*),$$
and the function $\Phi_\D(\chi; z')$  can be written by Lauricella's $F_D^{(n-3)}(\l_1, \dots, \l_{n-3})$, similarly to the case when $n=5$.
Also Lauricella's $F_A$ and $F_B$ can be written by general hypergeometric function (see Subsection \ref{subsec. of X_A} and Remark \ref{rem for FA and FB}). 
On the other hand, Lauricella's $F_C$ can not be written by general hypergeometric functions, as far as the author knows. 
\end{rem}

%\subsubsection{$k=3, n=6$}

\section{Hypergeometric varieties}
Here after, put $N=q-1$.
Let $\ol\k$ be an algebraic closure of $\k$, and let $\k_r \subset \ol\k$ be the degree $r$ extension of $\k$ for $r \geq 2$.

\subsection{The number of rational points}
Let $X$ be an algebraic variety over $\k$ and suppose that a finite abelian group $G$ acts on $X$ over $\k$.
We write $X \otimes \k_r$ for the variety $X \times_{{\rm Spec}(\k)} {\rm Spec}(\k_r)$ over $\k$.
Let $ {\rm Frob} \colon X \rightarrow X$ be the $q$-Frobenius morphism.
For each character $\chi \in \widehat{G}$, {\it the number of rational points on }$X$ {\it associated with }$\chi$ is 
$$N(X;\chi) := \dfrac{1}{\#G} \sum_{g \in G} \chi(g) \Lambda_g,$$
where 
$$ \Lambda_g  := \#\{P \in X(\ol\k) \mid {\rm Frob}(P)=g\cdot P\}. $$

\begin{rem}
By the orthogonality of characters, we have
$$ \# X(\k) = \sum_{\chi \in \widehat{G}} N(X;\chi).$$
Accordingly, the congruent zeta function $Z(X;t)$ decomposes into the product of Artin $L$-functions $L(X,\chi;t)$ ($\chi \in \widehat{G}$), which are generating functions of $N(X;\chi)$ (cf. \cite{Serre}).
%When $X$ is smooth, by the Grothendieck-Lefschetz trace formula, we have
%$$ N(X;\chi) = \sum_{i=0}^{2\dim X} (-1)^i {\rm Tr}( {\rm Frob}^{-1} \mid H^i(X \otimes \ol \k, \ol{\Q_l})(\chi)),$$
%where $H^i(X \otimes \ol\k, \ol{\Q_l})(\chi)$ is the $\chi$-eigenspace of $l$-adic \' etale cohomology with compact support.
\end{rem}
For an extension $\k' \supset \k$, the Galois group ${\rm Gal} := {\rm Gal}(\k'/\k)$ acts on ${\rm Spec}(\k')$ over $\k$ by $a \mapsto e(a)$ on the structure sheaf $\mathcal{O}_{{\rm Spec}(\k')}$, where $e \in {\rm Gal}$.
One shows (\cite[(12)]{Serre}) 
\begin{equation*}
 N({\rm Spec} (\k'); \rho) = \rho(F),
\end{equation*}
where $\rho \in \widehat{{\rm Gal}}$ and $F \in {\rm Gal}$ is the Frobenius automorphism.
Furthermore, the group $G \times {\rm Gal}$ acts naturally on $X \otimes \k'$ over $\k$, and we have
\begin{equation}
N(X \otimes \k'; (\chi, \rho)) = \rho(F) N(X;\chi),\label{N of base change}
\end{equation}
for $(\chi, \rho) \in \widehat{G} \times \widehat{\rm Gal}$.

Let $Y$ be an algebraic variety which has an action of a finite abelian group $G'$ over $\k$.
Suppose that there are an isomorphism $f \colon X \rightarrow Y$ over $\k$ and an isomorphism $\pi \colon G \rightarrow G'$ such that
$$ f \circ g  = \pi(g) \circ f $$
for all $g \in G$.
One shows the following lemma.
\begin{lem}\label{Comparison of N by isom}
 For $\chi \in \widehat{G'}$, we have
$$N(X; \pi^*\chi) = N(Y;\chi),$$
where $\pi^*\chi := \chi \circ \pi \in \widehat G$.
\end{lem}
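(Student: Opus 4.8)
The plan is to reduce the entire statement to a single pointwise identity between the Frobenius-twisted fixed-point counts, namely $\Lambda_g = \Lambda'_{\pi(g)}$ for every $g \in G$, where $\Lambda'_{g'} := \#\{Q \in Y(\ol\k) \mid {\rm Frob}(Q) = g' \cdot Q\}$ is the analogous count attached to $Y$. Once this identity is established, the lemma follows by a direct reindexing of the defining character sums.

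First I would record that the isomorphism $f \colon X \to Y$, being defined over $\k$, commutes with the $q$-Frobenius morphism: on $\ol\k$-points one has ${\rm Frob} \circ f = f \circ {\rm Frob}$. Next, for a point $P \in X(\ol\k)$ satisfying ${\rm Frob}(P) = g \cdot P$, I would apply $f$ and combine this commutation with the intertwining hypothesis $f \circ g = \pi(g) \circ f$ to compute
$$ {\rm Frob}(f(P)) = f({\rm Frob}(P)) = f(g \cdot P) = \pi(g) \cdot f(P), $$
so that $f(P)$ lies in the set counted by $\Lambda'_{\pi(g)}$. Since $f$ is a bijection on $\ol\k$-points, this assignment is a bijection between the two twisted fixed-point sets, and therefore $\Lambda_g = \Lambda'_{\pi(g)}$.

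Finally I would substitute this into the definition of $N(X; \pi^*\chi)$. Writing $(\pi^*\chi)(g) = \chi(\pi(g))$ and inserting $\Lambda_g = \Lambda'_{\pi(g)}$, the sum over $g \in G$ transforms under the substitution $g' = \pi(g)$ into a sum over $g' \in G'$; this substitution is a bijection because $\pi$ is an isomorphism (in particular $\#G = \#G'$), and the resulting expression is exactly $N(Y; \chi)$.

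The manipulation is essentially mechanical; the only step that genuinely requires care — and the one I regard as the conceptual heart of the argument — is the commutation of $f$ with Frobenius, which is precisely where the hypothesis that $f$ is defined over $\k$, rather than merely over $\ol\k$, enters. Everything else is bookkeeping with the characters $\chi$ and $\pi^*\chi$.
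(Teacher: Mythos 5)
Your proof is correct, and it is exactly the standard argument the paper intends (the paper itself omits the proof, stating only ``One shows the following lemma''): the intertwining relation $f \circ g = \pi(g) \circ f$ together with the fact that $f$, being defined over $\k$, commutes with the $q$-Frobenius yields $\Lambda_g = \Lambda'_{\pi(g)}$, after which the character sums match under the reindexing $g' = \pi(g)$. No gaps; your identification of the Frobenius-commutation step as the one place where rationality over $\k$ is used is also the right emphasis.
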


\begin{rem}
The $(n-1)$-dimensional Fermat hypersurface 
$$ Fer_n \colon x_1^N + \cdots + x_n^N=1$$ 
has $(\k^*)^n$-action by $(\xi_i)_i\cdot (x_i)_i := (\xi_ix_i)_i$ for $(\xi_i)_i \in (\k^*)^n$.
On the other hand, Artin-Schreier curve 
$$AS \colon t^q-t = z^N$$
has $(\k^* \times \k)$-action by $(\xi,a) \cdot (z,t) := (\xi z, t+a)$ for $(\xi,a) \in \k^*\times \k$.
We write $Fer_n^* = Fer_n-\{x_1\cdots x_n=0\}$ and $AS^* = AS-\{z=0\}$.
For $\a, \a_1, \dots, \a_n \in \khat$ and $\psi \in \widehat{\k}$, one shows the well-known relations 
\begin{equation}
 N(Fer_n^*; (\a_1, \dots, \a_n) ) = (-1)^{n-1}j(\a_1, \dots, \a_n) \label{N of Fer}
\end{equation}
and
$$ N(AS^*; (\a,\psi)) = -g(\a). $$
\end{rem}
%If $G' \subset G$ is a normal subgroup, $\pi\colon G \rightarrow G/G'$ is the natural projection and $\chi \in \widehat{G/G'}$, then (cf. \cite[2.3]{Serre})
%\begin{equation}
%N(X;\pi^*\chi) = N(X/G' ; \chi).\label{Comparison of N by quotient}
%\end{equation}

\subsection{General hypergeometric functions}\label{subsection of X_D,z}

Let $z \in M(d,n;\k)$ and let $\D=(N_1,\dots,N_l)$ be a partition of $n$, where $N_1 \leq \cdots \leq N_l$.
Write $z = (z^{(1)}, \dots, z^{(l)})$ as in \eqref{z rep. 1}.
%$$\D = (\overbrace{n_1, \dots, n_1}^{p_1}, \dots, \overbrace{n_k, \dots, n_k}^{p_k})$$
%be a partition of $n$, where $n_1 < n_2 < \cdots < n_k $.
%We write
%$$ z = (z^{(1)}, \dots, z^{(k)}),\quad z^{(i)} = (z^{(i,1)}, \dots, z^{(i, p_i)}) \in M(d, n_ip_i;\k) ,$$
%where $z^{(i,j)} = ( z^{(i,j)}_0, \dots , z^{(i,j)}_{n_i-1}) \in M(d, n_i; \k)$ and $z^{(i,j)}_k$ are the column vectors.
Put 
$$\ol\theta_i(x) : = x_0^i \theta_i(x)$$ 
for $x = (x_0, x_1, \dots )$ (then, $\ol\theta_i(x) \in \Q[x_0, \dots, x_{i}]$). 
Suppose that $p \geq N_l$.
Define an affine variety $X_{\D,z} \subset \mathbbm{A}^{n+d}$ over $\k$ by the following equation for $(t_{i}, u_{(i,j)})$ and $s=(s_1, \dots, s_d)$: 
$$ \begin{cases} 
t_{i}^N = sz^{(i)}_0 \vspace{3pt}\\
t_{i}^N (u_{(i,1)}^q -u_{(i,1)}) =\ol\theta_1(sz^{(i)}) \vspace{3pt}\\
\hspace{30pt}\vdots\\
t_{i}^{N (N_i-1)} (u_{(i,N_i-1)}^q - u_{(i,N_i-1)}) = \ol\theta_{N_i-1}(sz^{(i)}) \vspace{3pt}\\
t_{i} \neq 0
\end{cases}(1 \leq i \leq l).$$
For a short notation, we denote the coordinates on $X_{\D,z}$ by $ \big( (t_{i}, u_{i}), s \big)$, where $u_{i} := (u_{(i,1)}, \dots, u_{(i,N_i-1)})$.
% and $(t_{(i,j)}, u_{(i,j)})$ means 
%$$ (t_{(1,1)}, u_{(1,1)}, \dots, t_{(1,p_1)}, u_{(1,p_1)}, \dots\dots , t_{(k,1)}, u_{(k,1)}, \dots, t_{(k,p_k)}, u_{(k,p_k)}).$$
The group $G_\D := \prod_{i=1}^l \big( (\k^*) \times \k^{N_i-1} \big)$ acts on $X_{\D,z}$, similarly to the $(\k^*\times\k)$-action on the Artin-Schreier curve.
The group $H_\D$ also acts on $X_{\D,z}$ through the isomorphism $\widetilde \iota \colon H_\D \rightarrow G_\D$ obtained at \eqref{isom iota tilde}.

\begin{thm}\label{N of X_D(z)-1}
Let $\chi \in \widehat{H_\D}$ be a character.
We have
$$ N(X_{\D,z} ; \chi) = \Phi_\D(\chi;z). $$

\end{thm}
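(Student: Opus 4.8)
The plan is to compute $N(X_{\D,z};\chi)$ straight from the definition and to match it termwise with the sum defining $\Phi_\D(\chi;z)$. The structural fact driving everything is that the $G_\D$-action on $X_{\D,z}$ (given by $t_i\mapsto\xi_it_i$, $u_{(i,j)}\mapsto u_{(i,j)}+a_{(i,j)}$, with $s$ fixed) is \emph{free}: from $P=((t_i,u_i),s)$ and its translate $g\cdot P$ one recovers $g$ via $\xi_i=(\xi_it_i)/t_i$ and $a_{(i,j)}=(u_{(i,j)}+a_{(i,j)})-u_{(i,j)}$. Hence for each $P\in X_{\D,z}(\ol\k)$ there is at most one $g$ with ${\rm Frob}(P)=g\cdot P$; write $g_P$ for it when it exists and call such $P$ contributing. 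Interchanging the order of summation in $N(X_{\D,z};\chi)=\tfrac{1}{\#G_\D}\sum_{g}\chi(g)\Lambda_g$ turns it into $\tfrac{1}{\#G_\D}\sum_{P}\chi(g_P)$ over contributing $P$. Since $G_\D$ is abelian with $\k$-rational elements, ${\rm Frob}(h\cdot P)=h\cdot{\rm Frob}(P)$ for $h\in G_\D$, so $g_{h\cdot P}=g_P$ and $\chi(g_P)$ is constant on orbits; the count collapses to $\sum_{O}\chi(g_O)$ over contributing orbits.

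Next I would read off the contributing condition from the defining equations. Equating ${\rm Frob}(P)=g\cdot P$ forces $s^q=s$, so $s\in\k^d$; the relations $t_i^N=sz_0^{(i)}$ and $t_i^q=\xi_it_i$ give $\xi_i=t_i^{q-1}=t_i^N=sz_0^{(i)}$, necessarily nonzero; and $t_i^{Nj}(u_{(i,j)}^q-u_{(i,j)})=\ol\theta_j(sz^{(i)})$ with $u_{(i,j)}^q-u_{(i,j)}=a_{(i,j)}$ gives $a_{(i,j)}=\ol\theta_j(sz^{(i)})/(sz_0^{(i)})^{j}=\theta_j(sz^{(i)})$, using $\ol\theta_j=x_0^j\theta_j$ and $t_i^{Nj}=(sz_0^{(i)})^{j}$. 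Thus $g_P$ depends only on $s$: its $i$-th block is $\big(sz_0^{(i)},\theta_1(sz^{(i)}),\dots,\theta_{N_i-1}(sz^{(i)})\big)$. Feeding this into $\chi$ (viewed on $G_\D$ through $\widetilde\iota$) and invoking the explicit form of characters of $J(N_i)$ from Proposition \ref{char of J(m)} yields exactly $\chi(g_P)=\prod_i\chi_i(sz^{(i)})=\chi(sz)$.

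Finally I would count contributing orbits over a fixed $s\in\k^d$ with all $sz_0^{(i)}\neq0$. Over $\ol\k$ the equations $t_i^N=sz_0^{(i)}$ and $u_{(i,j)}^q-u_{(i,j)}=\theta_j(sz^{(i)})$ are solvable, and any two solution tuples differ by a $(q-1)$-th root of unity (an element of $\k^*$) in each $t_i$ and by an element of $\ker(x\mapsto x^q-x)=\k$ in each $u_{(i,j)}$; so the contributing points over such $s$ form a single free $G_\D$-orbit, on which $\chi(g_O)=\chi(sz)$. Summing over $s$ and discarding the $s$ with some $sz_0^{(i)}=0$ (harmless, since $\chi_i(sz^{(i)})=0$ there by the convention of Definition \ref{def. of gen. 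HGF}) gives $N(X_{\D,z};\chi)=\sum_{s\in\k^d}\chi(sz)=\Phi_\D(\chi;z)$. The delicate step, and the one carrying the whole point of the construction, is the identity $\chi(g_P)=\chi(sz)$: the twists $\ol\theta_j$ and weights $t_i^{Nj}$ in the definition of $X_{\D,z}$ are tuned precisely so that the Artin--Schreier shift $u_{(i,j)}^q-u_{(i,j)}$ reproduces $\theta_j(sz^{(i)})$, matching the character formula of Proposition \ref{char of J(m)}.
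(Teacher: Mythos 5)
Your proposal is correct and follows essentially the same route as the paper: both reduce the count of Frobenius-twisted fixed points to the observation that over each $s\in\k^d$ the equations force $\xi_i=sz_0^{(i)}$ and $a_{(i,j)}=\theta_j(sz^{(i)})$ (so $g$ is determined by $s$ and $\chi(g)=\chi(sz)$ via Proposition~\ref{char of J(m)}), with the fibre over such $s$ having exactly $\#G_\D$ points. Your orbit-counting phrasing is just a repackaging of the paper's direct computation $\Lambda_g=\#G_\D\times\#\{s\in\k^d\mid g=\widetilde\iota([sz])\}$, and your handling of the degenerate $s$ with $sz_0^{(i)}=0$ matches the convention in Definition~\ref{def. of gen. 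HGF}.
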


\begin{proof}
%Note that
%$$ \Phi_\Delta(\chi;z) = \prod_{i=1}^k \Big( \sum_{s \in \k^d} \chi_i(sz^{(i)}) \Big) =  \prod_{i=1}^k \prod_{j = 1}^{p_i} \Big( \sum_{s \in \k^d} \chi_{i,j}(sz^{(i,j)}) \Big),$$
%and 
Note that, for $\chi' \in \widehat{G_\D}$ such that $\chi = \chi' \circ \widetilde \iota \in \widehat{H_\D}$, clearly we have
\begin{equation}
N(X_{\D,z}; \chi) = N(X_{\D,z} ; \chi').\label{eq.2 in pr of N of X_D(z)}
\end{equation}
For $g = ( \xi_{i}, a_{(i,1)}, \dots , a_{(i,N_i-1)} )_{i} \in G_\D$, we have
\begin{align*}
\Lambda_g 
& =  \#\{ P \in X_{\D,z} (\ol\k) \mid {\rm Frob}(P) = g\cdot P\}\\
& =  \#\{ ( (t_{i}, u_{i}) ,s) \in X_{\D,z}(\ol\k) \mid t_{i}^N = \xi_{i},\,  u_{(i,j)}^q-u_{(i,j)} = a_{(i,j)}, \, s \in \k^d \}\\
& = \# G_\D \times \#\{ s \in \k^d \mid \xi_{i} = sz_0^{(i)}, a_{(i,j)} = \theta_j (sz^{(i)}) \}\\
& =  \# G_\D \times \#\{ s \in \k^d \mid g = \widetilde\iota([sz])\}.
\end{align*}
Thus, 
\begin{align*}
N(X_{\D,z} ; \chi') 
 = \dfrac{1}{\# G_\D} \sum_{g \in G_\D} \chi'(g) \Lambda_g 
 = \sum_{s \in \k^d} \chi' \circ \widetilde \iota ([sz])
 = \Phi_\D(\chi; z).
\end{align*}
Hence, $N(X_{\D,z} ; \chi) = \Phi_\D( \chi ; z)$ by \eqref{eq.2 in pr of N of X_D(z)}.
\end{proof}

For particular hypergeometric functions such as ${}_mF_n$, Appell-Lauricella and Humbert's functions, more simple varieties correspond to the functions as the following subsections.
\subsection{One variable hypergeometric functions}\label{subsec. of var. for one-variable functions}
The contents of this subsection are due to Otsubo (private communications).
I would like to thank him for his permission to include his results in this paper.

For the function $F(\l):=\d(1-\l)$ ($\l \in \k$), one shows
\begin{equation}
 \widehat{F}(\nu) = 1 \quad (\nu \in \khat).\label{F empty = delta}
\end{equation}
Let $0 \leq  m \leq n$ be integers and put $l=n-m$.
%One shows
%$$ \hF{\a_1,\dots, \a_m}{\b_1, \dots, \b_n}{\l} = \hF{\ol{\b_1}, \dots, \ol{\b_n}}{\ol{\a_1}, \dots, \ol{\a_m}}{(-1)^{m-n}\dfrac{1}{\l}}.$$
%Hence, we only consider the case when $m\leq n$, .
For $\l \in \k^*$, let ${}_mX_{n,\l} \subset Fer_2^{m} \times AS^{l}$ be an affine variety over $\k$  defined by the equation
\begin{align*}
\begin{cases}
 x_i^N + y_i^N = 1 & (i = 1, \dots, m) \vspace{3pt}\\
 t_j^q - t_j = z_j^N & (j=1, \dots, l) \vspace{3pt}\\
 (-1)^n \l \prod_{i=1}^m x_i^N = \prod_{i=1}^m y_i^N \prod_{j=1}^l z_j^N\vspace{5pt}\\
 \prod_{i=1}^m x_iy_i \prod_{j=1}^l z_j \neq 0. 
\end{cases}
\end{align*}
We denote the coordinates of ${}_mX_{n,\l}$ by 
$$(x_i, y_i, z_j, t_j) := (x_1, \dots, x_m, y_1, \dots, y_m, z_1, \dots, z_l, t_1, \dots, t_l).$$
One shows that the rank of Jacobian matrix of ${}_mX_{n,\l}$ is $n+1$ at any point, and hence, the variety ${}_mX_{n,\l}$ is smooth and $\dim {}_mX_{n,\l} = n-1$.
The finite abelian group $G := (\k^*)^{2m+l} \times \k^l$ acts on ${}_mX_{n,\l}$ over $\k$ by 
$$ (\xi_i, \xi_i', \zeta_j, a_j)\cdot (x_i, y_i, z_j, t_j) := (\xi_i x_i, \xi_i'y_i, \zeta_j z_j, t_j + a_j)\quad ((\xi_i, \xi_i', \zeta_j, a_j) \in G). $$
%Write $\bm \psi$ for $(\psi, \dots, \psi) \in \widehat{\k}^l$.

When $m=n$, Otsubo expressed $N({}_mX_{n,\l};\chi)$ ($\chi \in \widehat G$) in terms of functions  ${}_mF_{m-1}(\l)$ over $\k$, and he told the author that it would work well to use the Artin-Schreier curve to obtain a relation between confluent type ${}_mF_{n}$ (i.e. $n \neq m-1$) and the number of rational points.
The following is the result.
\begin{thm}\label{N of X_mn}
Let $\chi = ((\a_i)_{i=1}^m, (\b_i)_{i=1}^m, (\c_j)_{j=1}^l, (\psi_{c_j})_{j=1}^l) \in \widehat{G}$, where $c_j \in \k^*$, and put $ c = \prod_{j=1}^l c_j$.
If $\a_i\b_i \neq \e$ for all $i$, then  we have
\begin{align*}
 N&({}_mX_{n,\l}; \chi)  \\
 & = (-1)^{n+1}\Big( \prod_{j=1}^l g(\c_j)\ol{\c_j}(c_j)\prod_{i=1}^m j(\a_i, \b_i) \Big) 
  \hF{\a_1,\dots,\a_m}{\ol{\b_1},\dots,\ol{\b_m}, \ol{\c_1},\dots,\ol{\c_l}}{ c \l}_\psi.
\end{align*}
\end{thm}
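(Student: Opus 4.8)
The plan is to compute $N({}_mX_{n,\l};\chi)$ directly from its definition as a $\chi$-weighted count of $\ol\k$-points fixed by the twisted Frobenius, and to recognise the resulting character sum as the Fourier-inversion of the hypergeometric function via Proposition \ref{iteration}. First I would observe that the defining variety factors, up to the two ``gluing'' conditions, into a product of Fermat curves $Fer_2^*$ and Artin--Schreier curves $AS^*$, on each of which the $\chi$-component count is already known: by \eqref{N of Fer} we have $N(Fer_2^*;(\a_i,\b_i)) = -j(\a_i,\b_i)$, and $N(AS^*;(\c_j,\psi_{c_j})) = -g(\c_j)$. The two remaining conditions are the multiplicative relation $(-1)^n\l\prod x_i^N = \prod y_i^N \prod z_j^N$ and the open condition $\prod x_iy_i\prod z_j\neq 0$, which I would handle by introducing an extra multiplicative variable (a character $\nu$) that enforces the relation via orthogonality.

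Concretely, I would write $\Lambda_g$ for $g=(\xi_i,\xi_i',\zeta_j,a_j)\in G$ as the number of fixed points of $g\cdot{\rm Frob}^{-1}$, i.e. points with $x_i^N=\xi_i$, $y_i^N=\xi_i'$, $z_j^N=\zeta_j$ and $t_j^q-t_j$ shifted by $a_j$, subject to the gluing relation. Summing $\chi(g)\Lambda_g/\#G$ and using the standard identities that convert each factor's count into a Gauss or Jacobi sum, the whole expression should collapse to a single sum over a character $\nu\in\khat$ of the shape
\begin{equation*}
\frac{1}{1-q}\sum_{\nu\in\khat}\Big(\prod_{i=1}^m \frac{(\a_i)_\nu}{(\ol{\b_i})_\nu^\circ}\Big)\Big(\prod_{j=1}^l \frac{1}{(\ol{\c_j})_\nu^\circ}\Big)\,\nu((-1)^n c\l),
\end{equation*}
up to the scalar factors $\prod_j g(\c_j)\ol{\c_j}(c_j)\prod_i j(\a_i,\b_i)$ and the overall sign $(-1)^{n+1}$. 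The appearance of $\ol{\c_j}(c_j)$ is exactly the twist coming from using $\psi_{c_j}$ rather than $\psi$ in the $j$-th Artin--Schreier factor, and the sign $(-1)^n$ inside $\nu((-1)^n c\l)$ is absorbed by the $(-1)^n$ in the defining equation, leaving $\nu(c\l)$; I would track these signs and twists carefully. Comparing with the defining series of $F(\cdots)$ then yields the claimed formula after matching $(\a_i)_\nu$, $(\ol{\b_i})_\nu^\circ$, $(\ol{\c_j})_\nu^\circ$ with the numerator and denominator parameters $\a_i$ and $\ol{\b_i},\ol{\c_j}$.

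Rather than reprove the Fourier-inversion from scratch, I expect the cleanest route is to invoke Proposition \ref{iteration} (or the single-variable remark preceding it) together with \eqref{F empty = delta}: starting from $f(\l)=\d(1-\l)$ whose Fourier transform is identically $1$, each Fermat factor contributes a factor $(\a_i)_\nu/(\ol{\b_i})_\nu^\circ$ via part (i) or (ii), and each Artin--Schreier factor contributes $1/(\ol{\c_j})_\nu^\circ$ via part (iv), so that iterating the proposition builds up precisely the product of Pochhammer ratios appearing in the definition of $F$. The main obstacle will be the careful bookkeeping of the gluing relation and the accompanying sign and scalar normalisations: one must verify that introducing the auxiliary summation variable $\nu$ to encode the multiplicative constraint produces exactly the $1/(1-q)$ normalisation of the hypergeometric sum and the correct constant $(-1)^{n+1}\prod_j g(\c_j)\ol{\c_j}(c_j)\prod_i j(\a_i,\b_i)$, including checking the hypothesis $\a_i\b_i\neq\e$ that keeps each Jacobi-sum factor in its nondegenerate form \eqref{J=G}.
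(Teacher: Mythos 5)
Your proposal follows essentially the same route as the paper's proof: the paper likewise computes $\Lambda_g = \#G\cdot\d'(g)$ by the substitutions $u_i=x_i^N$, $s_j=t_j^q-t_j$, performs the changes of variables $\zeta_j'=c_j\zeta_j$ (producing the twist $\ol{\c_j}(c_j)$) and $\omega_i=\xi_i/(\xi_i-1)$, and then applies \eqref{F empty = delta} together with Proposition \ref{iteration} to the starting function $\d(1-\l)$, using the identity $\a_i(-1)j(\a_i,\ol{\a_i\b_i})=j(\a_i,\b_i)$ to land on the stated constant. The only cosmetic difference is that the paper invokes Proposition \ref{iteration} (iii) for the Fermat factors where you cite (i)/(ii), which are interchangeable in this one-variable setting.
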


\begin{proof}
For $ g = (\xi_i, \xi_i', \zeta_j, a_j) \in G$, define $\d'(g) \in \{0,1\}$ by $\d'(g)=1$ if and only if $g$ satisfies
$$ \begin{cases} \xi_i + \xi_i' =1 & (i=1, \dots, m) \vspace{3pt} \\ a_j = \zeta_j & (j=1, \dots, l) \\ (-1)^n \l \prod_{i=1}^m \xi_i = \prod_{i=1}^m \xi_i' \prod_{j=1}^l \zeta_j. \end{cases} $$
If we put $u_i = x_i^N, v_i = y_i^N, w_j = z_j^N$ and $s_j = t_j^q-t_j$, we have
\begin{align*}
\Lambda_g 
& = \# \{ (x_i, y_i, z_j, t_j) \in {}_mX_{n,\l}(\ol\k) \mid (x_i^N, y_i^N, z_j^N,t_j^q-t_j) = g \}\\
& = \# G \times \left\{ (u_i, v_i, w_j, s_j) \in G\,  \middle| \, {\d'(u_i, v_i, w_j, s_j)=1, \atop (u_i,v_i,w_j,s_j) = g}\right\}\\
& = \# G \times  \d'(g).
\end{align*}
Therefore, (by putting $\zeta_j' = c_j \zeta_j$)
\begin{align*}
& N({}_mX_{n,\l}; \chi) \\
& = \sum_{g \in G} \chi(g) \d'(g)\\
& = \sum_{(\xi_i, \xi_i', \zeta_j,a_j) \in G} \d'(\xi_i, \xi_i', \zeta_j, a_j) \prod_{i=1}^m \a_i(\xi_i)\b_i(\xi_i')\prod_{j=1}^l \c_j(\zeta_j) \psi(c_ja_j) \\
& = ( \prod_j \ol{\c_j}(c_j) ) \sum_{\xi_i , \zeta_j' \in \k^*} \d( 1- \l \prod_i \dfrac{\xi_i}{\xi_i-1} \prod_j \dfrac{c_j}{(-\zeta_j')}) \prod_i \a_i(\xi_i) \b_i(1-\xi_i) \prod_j \c_j(\zeta_j') \psi(\zeta_j').
\end{align*}
Letting $\omega_i = \xi_i/(\xi_i-1)$, the last right-hand side above is equal to
\begin{align*}
&  \Big( \prod_i \a_i(-1) \prod_j \ol{\c_j}(c_j) \Big)  \sum_{\omega_i, \zeta_j' \in \k^*} \d(1-c\l\dfrac{ \prod_i \omega_i}{\prod_j (-\zeta_j')}) \prod_i \a_i(\omega_i)\ol{\a_i\b_i}(1-\omega_i) \prod_j \c_j(\zeta_j')\psi(\zeta_j').
\end{align*}
Thus, we obtain the theorem by \eqref{F empty = delta} and Proposition \ref{iteration} (iii) and (iv),
where note that $\a_i(-1) j(\a_i, \ol{\a_i\b_i}) = j(\a_i,\b_i)$ by \eqref{J=G} and \eqref{Gauss sum thm}.
\end{proof}

\begin{rem}\label{rem of decomposition of mXn}
Let $\k' \supset \k$ be an extension such that $\{ \tau \mid \tau^N = (-1)^n \l \} \subset \k'$.
We can decompose ${}_mX_{n,\l} \otimes \k'$ as the following disjoint union:
$$ {}_mX_{n,\l} \otimes \k' = \bigsqcup_{\tau^N = (-1)^n\l} {}_mX_{n,\l}^\tau,$$
where
$$ {}_mX_{n,\l}^\tau := \{ (x_i, y_i, z_j, t_j) \in {}_mX_{n,\l} \otimes \k' \mid \tau \prod_{i=1}^m x_i = \prod_{i=1}^m y_i \prod_{j=1}^l z_j \}.$$
\end{rem}

\begin{rem}$ $
\begin{enumerate}
\item Let $Y_\l$ be an affine hypersurface defined by the equation 
$$v^N = (1-\l u_1\cdots u_{m-1})^a \prod_{i=1}^{m-1} u_i^{b_i} (1-u_i)^{c_i},\ v \neq 0,$$
where $a, b_i, c_i \in\Z_{>0}$.
Koblitz \cite[Theorem 3 and Remark 2]{Koblitz} computed the number of $\k$-rational points on $Y_\l$ and expressed it in terms of his ${}_mF_{m-1}$-function over $\k$.
As a relation between ${}_mX_{m,\l}$ and $Y_\l$, there is a morphism $ {}_mX_{m,\l}  \otimes \k_N \rightarrow Y_\l \otimes \k_N $ given by
$$  u_i = -\Big(  \dfrac{x_i}{y_i} \Big)^N , \quad v = x_m^{-a} \prod_{i=1}^{m-1} \dfrac{(\sqrt[N]{-1} x_i)^{b_i}} {y_i^{b_i+c_i}} .$$

\item Let $C_\l$ be the affine curve defined by $(1-u)^N(1-v)^N = \l u^Nv^N$.
Asakura-Otsubo \cite[Theorem 4.2]{Asakura-Otsubo} express the number of rational points on $C_\l$ in terms of $\hFF{2}{1}{\a,\b}{\e}{\l}$.
Clearly, we have the projection
$$ {}_2X_{2,\l} \longrightarrow C_\l \, ;\, (x_i,y_i) \longmapsto (u,v) = (x_1,x_2). $$
\end{enumerate}
\end{rem}

\begin{rem}\label{rem on inverse relation}
For the case when $m > n$, we have to consider ${}_nX_{m,(-1)^{m-n}/\l}$ since
$$\hF{\a_1,\dots, \a_m}{\b_1, \dots, \b_n}{\l} = \hF{\ol{\b_1}, \dots, \ol{\b_n}}{\ol{\a_1}, \dots, \ol{\a_m}}{\dfrac{(-1)^{m-n}}{\l}}.$$
This identity can be easily checked by \eqref{Poch formula}.
\end{rem}

\subsection{Appell-Lauricella functions}\label{subsec. of var. for AL functions}
We can also define corresponding varieties for Appell-Lauricella functions.
Let $\bm\l=(\l_1, \dots, \l_n) \in (\k^*)^n$. 
Define affine varieties $X_{F_D^n,\bm\l} \subset Fer_2^{n+1}$, $X_{F_A^n,\bm\l}\subset Fer_{n+1} \times Fer_2^n$  and $X_{F_C^n,\bm\l}\subset Fer_{n+1}^2$ by the equations
$$ X_{F_D^n,\bm\l} \colon \begin{cases} x_i^N + y_i^N = 1 &( i = 0, \dots, n) \vspace{3pt} \\ \l_i x_0^Nx_i^N = y_0^Ny_i^N & ( i= 1, \dots, n) \vspace{3pt} \\ \prod_{i=0}^n x_iy_i \neq 0, \end{cases}$$

$$ X_{F_A^n,\bm\l} \colon \begin{cases} x_0^N + \cdots +x_n^N=1 \vspace{3pt} \\ y_i^N + z_i^N = 1 & (i=1, \dots, n) \vspace{3pt} \\ \l_i x_0^N y_i^N = x_i^N z_i^N & ( i=1, \dots, n) \vspace{3pt} \\ \prod_{i=0}^n x_i \prod_{i=1}^n y_iz_i \neq 0, \end{cases}$$
and
$$ X_{F_C^n,\bm\l} \colon \begin{cases} x_0^N + \cdots + x_n^N = 1 \vspace{3pt} \\ y_0^N + \cdots + y_n^N = 1 \vspace{3pt} \\ \l_i x_0^Ny_0^N = x_i^N y_i^N & (i = 1, \dots, n) \vspace{3pt} \\ \prod_{i=0}^n x_iy_i \neq 0. \end{cases}$$
When $n=2$, we write respectively $X_{F_1,\bm\l}$, $X_{F_2,\bm\l}$ and $X_{F_4,\bm\l}$ for these varieties.
%Similarly to ${}_mX_{n,\l}$, these varieties are smooth, $\dim X_{F_D^n,\bm\l} = 1$ and $\dim X_{F_A^n,\bm\l} = \dim X_{F_C^n,\bm\l} = n$.
Similarly to ${}_mX_{n,\l}$, the groups $(\k^*)^{2n+2}$, $(\k^*)^{3n+1}$ and $(\k^*)^{2n+2}$ act on $X_{F_D^n,\bm\l}$, $X_{F_A^n,\bm\l}$ and $X_{F_C^n,\bm\l}$, respectively.
\begin{rem}\label{rem for FA and FB}
Similarly to Remark \ref{rem on inverse relation}, Lauricella's function $F_B^{(n)}$ is essentially equal to $F_A^{(n)}$.
Therefore, we omit $X_{F_B^n,\bm\l}$ in this paper.
\end{rem}

Note that the following is an involution:
$$
\begin{array}{rccc}
        & S            &\longrightarrow& S            \\
        & \rotatebox{90}{$\in$}&               & \rotatebox{90}{$\in$} \\
        & (x_i)_i                  & \longmapsto   & \Big( -\dfrac{x_i}{1-\sum_{j=1}^n x_j} \Big)_i,
\end{array}
$$
where $S := (\k^*)^n - \{(x_i)_i \mid \sum_i x_i = 1\}$.
By this involution and the same argument of the proof of Theorem \ref{N of X_mn}, we obtain the following theorem.
\begin{thm}\label{N of X_Lauricella}
$ $
\begin{enumerate}
\item \label{N of X_Lauricella-D} Let $\chi = ((\a_i)_{i=0}^n, (\b_i)_{i=0}^n) \in (\khat)^{2(n+1)}$. 
If $\a_i\b_i \neq \e $ for all $i =0, \dots, n$, then we have
\begin{align*}
& N(X_{F_D^n,\bm\l};\chi) 
 = -\big( \prod_{i=0}^n j(\a_i, \b_i) \big) \FD{n}{\a_0 \, ;\, \a_1, \dots, \a_n}{\ol{\b_0}\, ;\, \ol{\b_1}, \dots, \ol{\b_n}}{\bm\l}.
\end{align*}

\item \label{N of X_Lauricella-A} Let $\chi =( (\a_i)_{i=0}^n, (\b_i)_{i=1}^n, (\c_i)_{i=1}^n ) \in (\khat)^{3n+1}$.
If $\e \not\in \{ \a_0\cdots \a_n , \b_i\c_i\, (i=1,\dots,n) \}$, then we have
\begin{align*}
 & N(X_{F_A^n,\bm\l} ; \chi) \\
 & = (-1)^n j(\a_0,\dots,\a_n)\big( \prod_{i=1}^n j(\b_i,\c_i) \big) \FA{n}{\a_0\, ;\, \b_1,\dots,\b_n}{\ol{\a_1},\dots, \ol{\a_n}\, ;\, \ol{\c_1}, \dots,\ol{\c_n}}{\bm\l}. 
\end{align*}

\item \label{N of X_Lauricella-C} Let $\chi = ((\a_i)_{i=0}^n, (\b_i)_{i=0}^n) \in (\khat)^{2(n+1)}$.
If $\e \not \in \{\a_0\cdots\a_n, \b_0\cdots\b_n\}$, then we have
\begin{align*}
& N(X_{F_C^n,\bm\l}; \chi) \\
& = (-1)^n j(\a_0, \dots, \a_n)j(\b_0,\dots,\b_n) \FC{n}{\a_0\, ;\, \b_0}{\ol{\a_1},\dots,\ol{\a_n}\, ;\,  \ol{\b_1}, \dots,\ol{\b_n}}{\bm\l}.
\end{align*}
\end{enumerate}
\end{thm}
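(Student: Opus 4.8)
The plan is to mimic the proof of Theorem \ref{N of X_mn} very closely, since each of the three Appell-Lauricella varieties has exactly the same structure: a product of Fermat hypersurfaces cut out by the ``ratio'' equations $\l_i x_0^N x_i^N = y_0^N y_i^N$ (and their analogues), together with an open condition removing the coordinate hyperplanes. First I would compute $\Lambda_g$ for a group element $g \in G$ (where $G$ is the relevant torus $(\khat)^{2(n+1)}$, $(\khat)^{3n+1}$, or $(\khat)^{2(n+1)}$). As in Theorem \ref{N of X_mn}, setting $u_i = x_i^N$ etc.\ turns the $N$th-power coordinates into the values prescribed by $g$, so $\Lambda_g = \#G \cdot \d'(g)$, where $\d'(g)$ is the indicator that these prescribed values themselves lie on the variety defined by the same equations (now read over $\k^*$ rather than $\ol\k$). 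Consequently $N(X_{F_\bullet^n,\bm\l};\chi) = \sum_{g}\chi(g)\d'(g)$ collapses to a character sum over the base-point coordinates.

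Next I would carry out the substitutions that convert the $\d'$-constrained sum into the defining series of $\Phi_\D$ or, more directly, into the relevant $F_A$, $F_C$, $F_D$ via Proposition \ref{iteration}. The key device, flagged in the statement, is the involution $(x_i)_i \mapsto \big(-x_i/(1-\sum_j x_j)\big)_i$ on $S = (\k^*)^n - \{\sum_i x_i = 1\}$: applying it to the Fermat-simplex coordinates $x_0,\dots,x_n$ (via $\xi_i = x_i^N$) transforms the simplex relation $\sum x_i^N = 1$ into the factor $\a_0\cdots\a_n(\cdots)$ and produces the Jacobi sum $j(\a_0,\dots,\a_n)$ together with the correct $(\a)_{\nu_1\cdots\nu_n}$-type Pochhammer ratios after Fourier inversion. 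For $F_D$ one uses the $\d$-relation $\l_i x_0^N x_i^N = y_0^N y_i^N$ to solve each $y_i$ and apply Proposition \ref{iteration}(ii) coordinatewise, picking up $\prod_i j(\a_i,\b_i)$; for $F_A$ and $F_C$ one combines the involution on the $(n+1)$-simplex (contributing $j(\a_0,\dots,\a_n)$, hence the sign $(-1)^n$ via \eqref{N of Fer}) with $n$ further applications of Proposition \ref{iteration}(ii) on the two-variable Fermat factors, yielding $\prod_i j(\b_i,\c_i)$ or $j(\b_0,\dots,\b_n)$. In each case the nondegeneracy hypotheses ($\a_i\b_i\neq\e$, or $\e\notin\{\a_0\cdots\a_n,\dots\}$) are exactly what is needed to invoke the corresponding part of Proposition \ref{iteration}.

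The main obstacle I anticipate is bookkeeping rather than conceptual: matching the multi-index Pochhammer symbols that arise after Fourier inversion to the precise definitions of $F_A^{(n)}$, $F_C^{(n)}$, $F_D^{(n)}$, and in particular tracking where the numerator Pochhammer $(\a_0)_{\nu_1\cdots\nu_n}$ (shared across variables, coming from the single simplex constraint) versus the separate $(\b_i)_{\nu_i}$ factors land, so that the arguments and subscripts of the resulting hypergeometric function come out with the stated inverse characters $\ol{\a_i}, \ol{\b_i}, \ol{\c_i}$. I would organize this by treating the simplex factor (which forces a single summation index shared among the $\l_i$'s, reflecting the $\a_0$-parameter common to all variables in $F_A$, $F_C$, $F_D$) separately from the pairwise Fermat factors, and verifying the signs by \eqref{N of Fer} and the identity $\a(-1)j(\a,\ol{\a\b}) = j(\a,\b)$ used at the end of the proof of Theorem \ref{N of X_mn}. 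Since the statement explicitly says the result follows ``by the same argument of the proof of Theorem \ref{N of X_mn},'' I expect the three cases to differ only in how many copies of each Fermat type appear and hence in the collection of Jacobi-sum prefactors, so I would present the $F_D$ case in full and indicate the minor modifications for $F_A$ and $F_C$.
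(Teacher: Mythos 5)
Your proposal matches the paper's own proof, which consists precisely of the involution $(x_i)_i \mapsto \bigl(-x_i/(1-\sum_j x_j)\bigr)_i$ on $S=(\k^*)^n-\{\sum_i x_i=1\}$ combined with the $\Lambda_g = \#G\cdot\d'(g)$ computation and the identities of Proposition \ref{iteration}, exactly as in the proof of Theorem \ref{N of X_mn}; the paper states this in one line and leaves the bookkeeping you describe to the reader. The only micro-correction: in the $F_D$ case the shared factor $(\a_0)_{\nu_1\cdots\nu_n}/(\ol{\b_0})^\circ_{\nu_1\cdots\nu_n}$ comes from the $0$-th Fermat conic via the single-$u$, multi-index part, Proposition \ref{iteration} (iii), rather than from coordinatewise applications of (ii), but this does not affect the validity of your argument.
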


\begin{rem}
Let $Y_{\bm\l}$ be a curve over $\k$ defined by
$$ v^N = u^a (1-u)^c \prod_{i=1}^n (1-\l_i u)^{b_i} \quad (v \neq 0), $$
where $a, b_i, c$ are positive integers.
In \cite[Theorem 4.2] {N}, the number of rational points $N(Y_{\bm\l}; \chi)$, where $\chi \in \khat$, is expressed in terms of Lauricella's function $F_D^{(n)}$ over $\k$. 
There exists a morphism $X_{F_D^n,\bm\l} \otimes \k_N \rightarrow Y_{\bm\l} \otimes \k_N$ given by
$$ u = x_0^N,\quad  v = x_0^ay_0^c \prod_{i=1}^n \big( \sqrt[N]{-1}\dfrac{y_i}{x_i} \big)^{b_i}.$$
\end{rem}

\subsection{Humbert's functions}\label{subsec. of var. for Humbert's functions}
Let $\bm\l=(\l_1,\l_2) \in (\k^*)^2$.
Define varieties $X_{\Phi_1,\bm\l}\subset Fer_2^2 \times AS$ and $X_{\Phi_3,\bm\l}\subset Fer_2 \times AS^2$ by
\begin{align*}
& X_{\Phi_1,\bm\l}\colon \begin{cases} x_i^N+y_i^N=1 & (i=1,2)\\ t^q-t = z^N \\ \l_1 x_1^Nx_2^N = y_1^Ny_2^N \\ \l_2 x_1^N = y_1^Nz^N \\ x_1x_2y_1y_2z \neq 0, \end{cases}\ 
X_{\Phi_3,\bm\l} \colon \begin{cases} x^N + y^N = 1 \\ t_i^q-t_i = z_i^N & (i=1,2) \\ \l_1 x^N = y^Nz_1^N \\ \l_2 = z_1^Nz_2^N \\ xyz_1z_2 \neq 0. \end{cases} 
\end{align*}
Similarly to the previous subsections, the groups $(\k^*)^5 \times \k$ and $(\k^*)^4 \times \k^2$ acts on $X_{\Phi_1,\bm\l}$ and $X_{\Phi_3,\bm\l}$, respectively.
We have the following theorem by a similar argument to the proof of the theorems in the previous subsections.

\begin{thm}\label{N of X_Humbert}$ $
\begin{enumerate}
\item Let $\chi = ( \a_1,\a_2,\b_1,\b_2, \c, \psi_c) \in \widehat{(\k^*)^5 \times \k}$, where $c \in \k^*$.
If $\e \not\in\{ \a_1\b_1, \a_2\b_2\}$, then we have
$$ N(X_{\Phi_1,\bm\l}; \chi ) = -g(\c) \ol\c(c) j(\a_1,\b_1)j(\a_2,\b_2) \Phi_1\left( {\a_1 ; \a_2 \atop \ol{\b_1}; \ol{\b_2}, \ol\c}; \l_1,c\l_2 \right)_\psi.$$

\item Let $\chi = (\a,\b,\c_1,\c_2, \psi_{c_1}, \psi_{c_2}) \in \widehat{ (\k^*)^4 \times \k^2}$, where $c_1,c_2 \in \k^*$.
If $\a\b \neq \e$, then we have
$$N(X_{\Phi_3,\bm\l}; \chi) = -(\prod_{j=1}^2 g(\c_j)\ol{\c_j}(c_j) )j(\a,\b) \Phi_3\left( {\a \atop \ol{\c_1}; \ol \b,\ol{\c_2}}; c_1\l_1, c_1c_2\l_2\right)_\psi.$$
\end{enumerate}
\end{thm}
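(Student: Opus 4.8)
The plan is to run the first half of the proof of Theorem~\ref{N of X_mn} unchanged and then feed the resulting character sum into Proposition~\ref{iteration}. I treat (i) in detail; (ii) is parallel. Write $g=(\xi_1,\xi_2,\eta_1,\eta_2,\zeta,a)\in G=(\k^*)^5\times\k$ and $\chi=(\a_1,\a_2,\b_1,\b_2,\c,\psi_c)$. Putting $u_i=x_i^N$, $v_i=y_i^N$, $w=z^N$, $s=t^q-t$, exactly as in Theorem~\ref{N of X_mn} one sees that the fibre of the invariant map over a given $g$ has cardinality $\#G$ when $g$ satisfies the defining relations and is empty otherwise, so $\Lambda_g=\#G\cdot\d'(g)$, where $\d'(g)=1$ precisely when $\xi_i+\eta_i=1$ $(i=1,2)$, $a=\zeta$, $\l_1\xi_1\xi_2=\eta_1\eta_2$ and $\l_2\xi_1=\eta_1\zeta$. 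Hence $N(X_{\Phi_1,\bm\l};\chi)=\sum_{g}\chi(g)\d'(g)$.

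Eliminating the constrained coordinates via $\eta_i=1-\xi_i$ and $a=\zeta$ and writing the two coupling relations as Dirac deltas gives
$$ N=\sum_{\xi_1,\xi_2,\zeta\in\k^*}\a_1(\xi_1)\a_2(\xi_2)\b_1(1-\xi_1)\b_2(1-\xi_2)\c(\zeta)\psi(c\zeta)\,\d\!\Big(1-\l_1\tfrac{\xi_1\xi_2}{(1-\xi_1)(1-\xi_2)}\Big)\d\!\Big(1-\l_2\tfrac{\xi_1}{(1-\xi_1)\zeta}\Big). $$
The product of the two deltas is the two-variable base $f(\l_1,\l_2)=\d(1-\l_1)\d(1-\l_2)$, with $\widehat{f}\equiv 1$ by \eqref{F empty = delta}, evaluated at scaled arguments. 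The structural point is that after the substitution $\omega_i=\xi_i/(\xi_i-1)$ and the rescaling $\zeta'=c\zeta$, the variable $\omega_1$ scales both arguments, $\omega_2$ scales the first argument only, and $\zeta'$ scales the second argument only; these three scalings mirror exactly the three factors $(\a_1)_{\mu\nu}/(\ol{\b_1})_{\mu\nu}^\circ$, $(\a_2)_{\mu}/(\ol{\b_2})_{\mu}^\circ$ and $1/(\ol\c)_{\nu}^\circ$ of the target $\Phi_1$.

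I then read off the Humbert function by applying Proposition~\ref{iteration} three times to $f$: case (iii) with $i=2$ for the shared variable $\omega_1$ (parameters $\a_1,\ol{\b_1}$, needing $\a_1\b_1\neq\e$), case (iii) with $i=1$ for $\omega_2$ (parameters $\a_2,\ol{\b_2}$, needing $\a_2\b_2\neq\e$), and case (iv) for $\zeta'$ (parameter $\ol\c$, contributing $g(\c)$ and, via $\zeta'=c\zeta$, the factor $\ol\c(c)$). The identities $\a_i(-1)j(\a_i,\ol{\a_i\b_i})=j(\a_i,\b_i)$ from \eqref{Gauss sum thm}, \eqref{J=G} turn the Jacobi sums into $j(\a_1,\b_1)j(\a_2,\b_2)$, the rescaling sends $\l_2$ to $c\l_2$ in the argument, and the signs and powers of $q-1$ collapse to the single minus sign exactly as in Theorem~\ref{N of X_mn}. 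The one genuinely new point is that $\Phi_1$ carries a factor indexed by $\nu$ alone in addition to one indexed by $\mu$ alone, so the convolution for $\zeta'$ scales only the second argument; this is not a prefix convolution in the literal form of Proposition~\ref{iteration}, and I apply that identity after relabelling the two arguments (the identities are symmetric in the $\l_j$). Keeping track of which scaling hits which argument, and of the collapse of constants, is the only delicate part of the argument.

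Part (ii) is identical in structure, with the roles of the three scalings interchanged. Here the coupling relations $\l_1x^N=y^Nz_1^N$ and $\l_2=z_1^Nz_2^N$ make $\zeta_1$ the shared variable and $\zeta_2$ the second-argument-only variable, while $\xi$ scales the first argument only. Accordingly I use case (iii) with $i=1$ for $\xi$ (parameters $\a,\ol\b$, needing $\a\b\neq\e$, giving $j(\a,\b)$), case (iv) with $i=2$ for the shared $\zeta_1$ (parameter $\ol{\c_1}$, giving $g(\c_1)$), and case (iv) for $\zeta_2$ (parameter $\ol{\c_2}$, giving $g(\c_2)$); the rescalings $\zeta_i'=c_i\zeta_i$ produce $\ol{\c_1}(c_1)\ol{\c_2}(c_2)$ and move the two arguments to $c_1\l_1$ and $c_1c_2\l_2$, which gives the asserted formula.
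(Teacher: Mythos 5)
Your proposal is correct and takes essentially the same approach as the paper, whose entire proof is the remark that the result follows ``by a similar argument to the proof of the theorems in the previous subsections'': namely the Theorem \ref{N of X_mn} scheme you reproduce, with $\Lambda_g=\#G\cdot\delta'(g)$, the substitutions $\omega=\xi/(\xi-1)$ and $\zeta'=c\zeta$ (resp. $\zeta_j'=c_j\zeta_j$), and Proposition \ref{iteration} (iii) and (iv) applied to $f(\lambda_1,\lambda_2)=\delta(1-\lambda_1)\delta(1-\lambda_2)$ with $\widehat f\equiv 1$, together with $\a(-1)j(\a,\ol{\a\b})=j(\a,\b)$. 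Your careful bookkeeping of which scaling variable multiplies which argument of $f$ (including the harmless relabelling needed to apply the prefix-form identities of Proposition \ref{iteration} to the second argument alone) is precisely the content the paper leaves implicit, and all constants, signs, conditions ($\a_i\b_i\neq\e$, resp. $\a\b\neq\e$) and the shifted arguments $(\lambda_1,c\lambda_2)$ and $(c_1\lambda_1,c_1c_2\lambda_2)$ come out as stated.
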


As mentioned in Subsection \ref{particular cases}, $\Phi_1$ and $\Phi_2$ are essentially equivalent, and hence we omit $X_{\Phi_2,\bm\l}$ in this paper.

\section{Relations among the hypergeometric varieties}
Put $N=q-1$.
For $x=(x_1, \dots, x_n) \in (\mathbbm A^*)^n := \mathbbm A^n-\{x_1\cdots x_n=0\}$ and $A=(a_{ij}) \in M(n,m;\Z)$, we use the following notation:
$$ x * A := ( \prod_{i=1}^n x_i^{a_{i1}}, \dots , \prod_{i=1}^n x_i^{a_{im}} ) \in (\mathbbm A^*)^m.$$
For $x \in (\k^*)^n$ or $(\khat)^n$, we use the same notation.
For a homogeneous coordinate $x \in (\mathbbm P^*)^{n-1} := \mathbbm{P}^{n-1} -\{x_1\cdots x_n=0\}$, we  define $x*A \in (\mathbbm P^*)^{m-1}$ similarly.
For $A \in M(n,m;\Z)$ and $B \in M(m,l;\Z)$, one shows
$$ ( x *A) * B = x*(AB), \quad x*(A+B) = (x*A)(x*B),\quad ( x y)*A = (x *A)(y *A),$$
where $xy$ is the component-wise product of $x$ and $y$.
For $ x \in (\k^*)^n$, $\chi \in (\khat)^m$ and $A \in M(n,m;\Z)$, one shows
\begin{equation}
\chi (x * A) = (\chi*{}^t A)(x). \label{character*matrices}
\end{equation}

\subsection{General hypergeometric functions}
Let $L$ be a field which contains the group $\mu_N$ of all $N$th roots of unity.
For $a \in L^*$, fix an $N$th root $\sqrt[N]{a}$ of $a$.
Define the homomorphism 
$$ K_a \colon {\rm Gal}( L(\sqrt[N]{a}) / L) \longrightarrow \mu_N \, ;\, e \longmapsto \dfrac{e(\sqrt[N]{a})}{\sqrt[N]{a}}.$$
For brevity, we use the same notation for the composition with the restriction map from ${\rm Gal}(L'/L)$ to ${\rm Gal}(L(\sqrt[N]{a})/L)$, where $L(\sqrt[N]{a})\subset L'$.
Note that $L( \{ \sqrt[N]{a} \mid a \in L^*\} ) = \k_N$ when $L=\k$.
For $t \in \k$, fix a root $r(t)$ of the Artin-Schreier equation $x^q-x = t$.
Define the homomorphism
\begin{align*}
A_t & \colon {\rm Gal}(\k(r(t))/\k) \longrightarrow \k \, ;\, e \longmapsto e(r(t))-r(t).
\end{align*}
Similarly above, we use the same notation for the composition with a restriction map, and note that $\k( \{ r(t) \mid t \in \k\}) = \k_p$.
As a remark, the homomorphisms $K_a$ and $A_t$ are independent of the choice of $\sqrt[N]{a}$ and $r(t)$.

Here after, fix an $N$ th root of $a \in \k^*$ and write $\sqrt[N]{a}$ for it.
Particularly, we take $\sqrt[N]{1} = 1$.
Furthermore, fix a root of the Artin-Schreier equation $x^q -x = 1$ and write $r(1)$ for it.
For $t \in \k$, put
$$ r(t) = t \times r(1) \in \k_p.$$
One shows that $r(t)$ is a root of the equation $x^q-x = t$, and that the map $t \mapsto r(t)$ defines a homomorphism $\k \rightarrow \k_p$.
For $a = (a_1, \dots, a_n) \in (\k^*)^n$, write
$$ K_{a} (e) = ( K_{a_1}(e), \dots, K_{a_n}(e) ) $$
and
$$ \sqrt[N]{a} = (\sqrt[N]{a_1}, \dots, \sqrt[N]{a_n}). $$

Let $\D=(N_1, \dots, N_l)$ ($p \geq N_l$) be a partition of $n$ as in Subsection \ref{subsection of X_D,z}.
For $h  \in H_\D $, write
$$ h = {\rm diag}(h_1, \dots, h_l), \quad h_i = [ h_{(i,0)}, \dots, h_{(i,N_i-1)} ] \in J(N_i).$$
The following theorems give geometric analogues of the formulas in Proposition \ref{action of GL and H on Phi} and Theorem \ref{sym of Phi}.
\begin{thm}\label{isom between X_D}
We have the following isomorphisms.
\begin{enumerate}
\item \label{isom Lg}
For $g \in GL_d(\k)$, 
$$ L_g \colon X_{\D,z}\longrightarrow X_{\D,gz},$$
given by
$$ \big( (t_{i}, u_{i}), s \big) \longmapsto   \big( (t_{i}, u_{i}), sg^{-1} \big). $$

\item \label{isom Rh}
For $h \in H_\D$,
$$ R_h \colon X_{\D,z} \otimes \k' \longrightarrow X_{\D,zh} \otimes \k'  \, ;\,  \big( (t_{i}, u_{i}), s \big) \longmapsto \big( (t_{i}', u_{i}'), s \big),$$
where
$$ t_{i}' = \sqrt[N]{h_{(i,0)}} t_{i}, \quad u_{(i,j)}' = u_{(i,j)} + r( \theta_j( h_i ) ) .$$
Here, $\k' = \k_N$ if $\D=(1, \dots, 1)$, and $\k'=\k_{pN}$ otherwise.
\end{enumerate}
\end{thm}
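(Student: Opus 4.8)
The plan is to prove both statements by exhibiting the displayed maps as explicit changes of fibre coordinates and checking, equation by equation, that each carries the defining equations of the source variety to those of the target; since every map is linear (resp.\ affine-linear) in the $(t_i,u_i)$ and the identity on $s$, producing an inverse morphism is then routine. Part \ref{isom Lg} is elementary and part \ref{isom Rh} is the substantive one, so I would dispatch the former quickly and concentrate on the latter.

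For \ref{isom Lg} I would write out the defining equations of $X_{\D,gz}$ in the unknowns $\big((t_i,u_i),s'\big)$. Because left multiplication by $g$ touches only the row index of $z$, the $i$-th block of $gz$ is $g z^{(i)}$, whence $s'(gz)^{(i)} = (s'g)z^{(i)}$. Substituting $s' = sg^{-1}$ turns $s'(gz)^{(i)}$ into $sz^{(i)}$, so each equation $t_i^N = s'(gz)_0^{(i)}$ and $t_i^{Nj}(u_{(i,j)}^q-u_{(i,j)}) = \ol\theta_j(s'(gz)^{(i)})$ becomes the corresponding equation of $X_{\D,z}$, and conversely. Thus $L_g$ is a well-defined morphism with inverse $L_{g^{-1}}$; no field extension is needed, so this holds over $\k$.

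For \ref{isom Rh} the key algebraic input is the additivity \eqref{additivity of theta} of the $\theta_j$. Since $h=\mathrm{diag}(h_1,\dots,h_l)$ with $h_i\in J(N_i)$, the $i$-th block of $zh$ is $z^{(i)}h_i$; as $h_i=\sum_k h_{(i,k)}\Lambda^k$ is upper-triangular Toeplitz, the $b$-th column of $z^{(i)}h_i$ is the convolution $\sum_a z_a^{(i)}h_{(i,b-a)}$, which is exactly the rule for multiplying $[sz_0^{(i)},\dots,sz_{N_i-1}^{(i)}]$ by $h_i$ in $J(N_i)$, i.e.\ multiplying the associated generating series. Hence \eqref{additivity of theta} gives $\theta_j\big(s(zh)^{(i)}\big)=\theta_j(sz^{(i)})+\theta_j(h_i)$, while $(zh)_0^{(i)}=h_{(i,0)}z_0^{(i)}$ gives $s(zh)_0^{(i)}=h_{(i,0)}\,sz_0^{(i)}$. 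Feeding the substitution $t_i'=\sqrt[N]{h_{(i,0)}}\,t_i$, $u_{(i,j)}'=u_{(i,j)}+r(\theta_j(h_i))$ into the equations of $X_{\D,zh}$: the first holds since $(t_i')^N=h_{(i,0)}t_i^N=h_{(i,0)}\,sz_0^{(i)}=s(zh)_0^{(i)}$; for the $j$-th, the relation $r(\theta_j(h_i))^q-r(\theta_j(h_i))=\theta_j(h_i)$ gives $(u_{(i,j)}')^q-u_{(i,j)}'=(u_{(i,j)}^q-u_{(i,j)})+\theta_j(h_i)$, and multiplying by $(t_i')^{Nj}=h_{(i,0)}^j t_i^{Nj}=(h_{(i,0)}sz_0^{(i)})^j$ and using $t_i^{Nj}(u_{(i,j)}^q-u_{(i,j)})=\ol\theta_j(sz^{(i)})$ collects the two contributions into $(s(zh)_0^{(i)})^j\big(\theta_j(sz^{(i)})+\theta_j(h_i)\big)=\ol\theta_j(s(zh)^{(i)})$, as required.

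The main obstacle is precisely this bookkeeping in the $u$-equations: one must recognize the columns of $zh$ as the $J(N_i)$-product so that the extra term $h_{(i,0)}^j t_i^{Nj}\theta_j(h_i)$, produced by the Artin--Schreier translation, merges with the transported term $(h_{(i,0)}sz_0^{(i)})^j\theta_j(sz^{(i)})$ exactly through \eqref{additivity of theta}. Two routine points remain. First, the scalars lie in the stated field: $\sqrt[N]{h_{(i,0)}}\in\k_N$, and since $p\geq N_l$ forces the rational coefficients of $\theta_j$ to have denominators prime to $p$, one has $\theta_j(h_i)\in\k$ and hence $r(\theta_j(h_i))\in\k_p$; thus $R_h$ is defined over $\k'=\k_N$ when $\D=(1,\dots,1)$ (no $u$-variables occur) and over $\k'=\k_{pN}$ in general. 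Second, $t_i\neq 0$ forces $t_i'\neq 0$, and the explicit formulas $t_i=t_i'/\sqrt[N]{h_{(i,0)}}$, $u_{(i,j)}=u_{(i,j)}'-r(\theta_j(h_i))$, $s\mapsto s$ define the inverse morphism, whose image lies in $X_{\D,z}\otimes\k'$ by running the same computation backward; hence $R_h$ is an isomorphism.
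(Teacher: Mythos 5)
Your proposal is correct and follows essentially the same route as the paper: both parts hinge on recognizing the columns of $z^{(i)}h_i$ as the convolution realizing the $J(N_i)$-product (the paper phrases this as $h\cdot[sz]=[szh]$ after reducing to $\D=(n)$) and then invoking the additivity \eqref{additivity of theta}, with the inverse given by $L_{g^{-1}}$ and $R_{h^{-1}}$. Your explicit bookkeeping in the $\ol\theta_j$-equations and the check that $p\geq N_l$ keeps the coefficients of $\theta_j$ $p$-integral are details the paper leaves implicit, but they do not constitute a different argument.
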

\begin{proof}
\ref{isom Lg} For $ \big( (t_{i}, u_{i}), s \big)  \in X_{\D,z}$,  one shows $L_g( (t_{i}, u_{i}), s  ) \in X_{\D,gz}$, since $sz = (sg^{-1})(gz)$. Clearly, the inverse morphism of $L_g$ is $L_{g^{-1}}$.

\ref{isom Rh} Noting that  $zh = (z^{(1)}h_1, \dots , z^{(l)} h_l)$ ($z^{(i)}$ is as in \eqref{z rep. 1}), we only have to prove for the case when $\D=(n)$ (i.e. $l=1$ and $H_\D = J(n)$).
Write $z = (z_0, \dots, z_{n-1})$, where $z_i$ are column vectors.
Then, for $ h = [h_0, \dots, h_{n-1}] \in J(n)$,  one shows
\begin{align*}
h \cdot [sz]
& = h \cdot [sz_0, \dots, sz_{n-1}] \\
&= [s h_0z_0, s(h_1z_0 + h_0z_1), \dots, s(h_{n-1}z_0 + \cdots + h_0z_{n-1})] \\
& = [szh].
\end{align*}
Therefore, we have $\theta_i(h) + \theta_i(sz) = \theta_i(h \cdot [sz]) = \theta_i(szh)$ by \eqref{additivity of theta}.
By this, for $ \big( (t_{i}, u_{i}), s \big)  \in X_{\D,z}$, we can check that $R_h( (t_{i}, u_{i}), s  ) \in X_{\D,zh}$.
The existence of the inverse morphism $R_h^{-1}$ is clear, where note that $\theta_j(h^{-1}) = -\theta_j(h)$ for $j \geq 1$ by Proposition \ref{char of J(m)} \ref{char of J(m)-1}.
\end{proof}

From now on, let $\Delta = (\overbrace{n_1, \dots, n_1}^{p_1}, \dots , \overbrace{n_k, \dots, n_k}^{p_k})$ be a partition of $n$ as in \eqref{refinment partition}.
Write
$$ z = (z^{(1)}, \dots, z^{(k)}),\quad z^{(i)} = (z^{(i,1)}, \dots, z^{(i, p_i)}) \in M(d, n_ip_i;\k) ,$$
where $z^{(i,j)} \in M(d, n_i; \k)$.
Then, the equation of $X_{\D,z}$ can be written as
$$ \begin{cases} 
t_{(i,j)}^N = sz^{(i,j)}_0 \vspace{3pt}\\
t_{(i,j)}^N (u_{(i,j,1)}^q -u_{(i,j,1)}) =\ol\theta_1(sz^{(i,j)}) \vspace{3pt}\\
\hspace{30pt}\vdots\\
t_{(i,j)}^{N(n_i-1)} (u_{(i,j,n_i-1)}^q - u_{(i,j,n_i-1)}) = \ol\theta_{n_i-1}(sz^{(i,j)}) \vspace{3pt}\\
t_{(i,j)} \neq 0
\end{cases}(1 \leq i \leq k, 1 \leq j \leq p_i),$$
and we write $u_{(i,j)} = (u_{(i,j,1)}, \dots, u_{(i,j,n_i-1)})$.

For $w \in W_\D = \prod_{i=1}^k (W(n_i)^{p_i} \rtimes \mathcal P_i) \subset GL_n(\k)$, write
$$ w = {\rm diag}( w_1, \dots, w_k), \quad w_i = {\rm diag} (\mu(c_1), \dots , \mu(c_{p_i})) \widetilde P_{\sigma_i} \in W(n_i)^{p_i} \rtimes \mathcal P_i,$$
where $ c_j \in \k^*\times \k^{n_i-2}$ and $  \sigma_i \in \S_{p_i}$.
Here, $\mathcal{P}_i \cong \S_{p_i}$, $W(n_i)$, $\widetilde P_\sigma \in \mathcal P_i$ and $\mu(c_j) \in W(n_i)$ are as in Subsection \ref{subsec. of Def. and Prop.}.
\begin{thm}\label{isom fw}
For $w \in W_\D$, we have the isomorphism
$$ f_w \colon X_{\D,z} \longrightarrow X_{\D,zw} \, ;\, \big( (t_{(i,j)}, u_{(i,j)}), s \big) \longmapsto \big( (t_{(i,j)}', u_{(i,j)}'), s \big),$$
given by
$$ t_{(i,j)}' =  t_{(i, \sigma_i(j))} ,\quad  u_{(i,j)}' =  u_{(i, \sigma_i(j))} \mu(c_{\sigma_i(j)})' .$$
\end{thm}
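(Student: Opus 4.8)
The plan is to verify directly that the explicit map $f_w$ carries a point of $X_{\D,z}$ into $X_{\D,zw}$, and then to write down its inverse. Since $w={\rm diag}(w_1,\dots,w_k)$ and the blocks are independent, it suffices to treat a single $w_i={\rm diag}(\mu(c_1),\dots,\mu(c_{p_i}))\widetilde P_{\sigma_i}$. Unwinding the definitions of $\widetilde P_{\sigma_i}$ and of the $\mu$-blocks, the $(i,j)$-block of $zw$ is $(zw)^{(i,j)}=z^{(i,\sigma_i(j))}\mu(c_{\sigma_i(j)})$. Writing $k=\sigma_i(j)$, the map sets $t_{(i,j)}'=t_{(i,k)}$ and $u_{(i,j)}'=u_{(i,k)}\mu(c_k)'$, so everything reduces to comparing the defining equations of $X_{\D,z}$ in block $k$ with those of $X_{\D,zw}$ in block $(i,j)$.

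For the first equation $t^N=sz_0$, the first column of $\mu(c_k)$ is ${}^t(1,0,\dots,0)$, so $\big(sz^{(i,k)}\mu(c_k)\big)_0=sz^{(i,k)}_0$ and the equation is unchanged. For the higher equations I would apply \eqref{theta mu = theta(mu)} with $x=sz^{(i,k)}$ and $y=c_k$, so that $x'=sz^{(i,k)}\mu(c_k)=s(zw)^{(i,j)}$; reading off the $m$-th component and using $\mu_{0,m}(c_k)=0$ together with $\mu_{l,m}(c_k)=0$ for $l>m$ gives
$$\theta_m\big(s(zw)^{(i,j)}\big)=\sum_{l=1}^m \theta_l\big(sz^{(i,k)}\big)\,\mu_{l,m}(c_k).$$
Since $u_{(i,j,m)}'=\sum_{l=1}^m u_{(i,k,l)}\mu_{l,m}(c_k)$ with every $\mu_{l,m}(c_k)\in\k$ fixed by Frobenius, additivity of $x\mapsto x^q-x$ yields $(u_{(i,j,m)}')^q-u_{(i,j,m)}'=\sum_{l=1}^m\big(u_{(i,k,l)}^q-u_{(i,k,l)}\big)\mu_{l,m}(c_k)$. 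Substituting the block-$k$ equations $t_{(i,k)}^{Nl}\big(u_{(i,k,l)}^q-u_{(i,k,l)}\big)=\ol\theta_l\big(sz^{(i,k)}\big)$ (dividing by $t_{(i,k)}^{Nl}\neq0$), multiplying the displayed identity by $(sz^{(i,k)}_0)^m=t_{(i,k)}^{Nm}$, and recalling $\ol\theta_m(x)=x_0^m\theta_m(x)$, the two sides match term by term, so $f_w(P)\in X_{\D,zw}$.

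For bijectivity, the diagonal of $\mu(c_k)'$ is $\big(c_{k,1},\dots,c_{k,1}^{\,n_i-1}\big)$ with $c_{k,1}\neq0$, so $\mu(c_k)'$ is invertible over $\k$. Hence the inverse of $f_w$ is given explicitly by $t_{(i,k)}=t'_{(i,\sigma_i^{-1}(k))}$ and $u_{(i,k)}=u'_{(i,\sigma_i^{-1}(k))}\big(\mu(c_k)'\big)^{-1}$, again a morphism over $\k$ — no base change to $\k'$ is required here, in contrast with $R_h$ in Theorem \ref{isom between X_D}. (Alternatively one may factor $f_w$ through the pure permutation case, where it merely reindexes blocks, and the pure-$\mu$ case $\sigma_i={\rm id}$, and compose.)

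The one genuinely delicate point will be the bookkeeping of the weights $t_{(i,k)}^{Nm}$: the transformation law \eqref{theta mu = theta(mu)} acts on the bare functions $\theta_m$, whereas the defining equations involve $\ol\theta_m=x_0^m\theta_m$ and the powers $t^{Nm}$. Keeping the identity $x_0=sz^{(i,k)}_0=t_{(i,k)}^N$ threaded through every term, so that the factor $x_0^m$ inside $\ol\theta_m$ exactly supplies the $t^{Nm}$ weight after the $\mu$-transform, is what makes the two systems of equations correspond and is the step most prone to index errors; the remainder is the formal additivity of Frobenius and the triangularity of $\mu(c_k)$.
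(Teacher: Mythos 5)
Your proposal is correct and follows essentially the same route as the paper: after reducing to a single block, you verify the defining equations via \eqref{theta mu = theta(mu)}, the fact that the first column of $\mu(c)$ is ${}^t(1,0,\dots,0)$, and the Frobenius-additivity of $x\mapsto x^q-x$ with $\k$-coefficients, then exhibit the inverse $f_{w^{-1}}$ using the invertibility of $\mu(c)'$. The weight bookkeeping you flag as delicate is handled in the paper by first rewriting the defining equations in the unweighted form $(t^N,u_1^q-u_1,\dots)=(sz_0,\theta_1(sz),\dots)$ (dividing by $t^{Nl}\neq0$), which is exactly your substitution carried out once and for all.
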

\begin{proof}
We only have to prove for the case when $\D = (\overbrace{n', \dots, n'}^m)$ (i.e. $k=1$ and $W_\D = W(n')^m \rtimes \mathcal \S_m$).
By the composition, we can prove separately in two cases:
\begin{enumerate}
\item \label{case 1} $w = \widetilde P_\sigma$, where $\sigma \in \S_m$,
\item \label{case 2} $w = {\rm diag} (\mu(c_1), \dots, \mu(c_m)) \in W(n')^m$.
\end{enumerate}
When \ref{case 1}, it is clear that $P \in X_{\D,z} \Rightarrow f_w(P) \in X_{\D,zw}$ and $f_w^{-1} = f_{w^{-1}}$.
Here, note that if we write $z = (z^{(1)}, \dots, z^{(m)})$, where $z^{(j)}$ is the $j$-th $n'$ columns, then $zw = (z^{(\sigma(1))}, \dots, z^{(\sigma(m))})$.

When \ref{case 2}, we only have to prove for the case when $m=1$ (i.e. $\D=(n)$ and $W_\D = W(n)$).
Write $z=(z_0, \dots, z_{n-1})$, where $z_0,\dots, z_{n-1}$ are the columns.
Then, the equation of the definition of $X_{\D,z}$ is equivalent to
\begin{equation}
 (t^N , u_1^q-u_1, \dots, u_{n-1}^q - u_{n-1}) = (sz_0, \theta_1(sz), \dots , \theta_{n-1}(sz)),\quad t \neq 0.\label{eq.2 in pf of isom fw}
\end{equation}
Thus, for $P := (t, u_1, \dots, u_{n-1}, s) \in X_{\D,z}$, and $w:= \mu(c) \in W_\D$, we have
\begin{align}
(t^N, u_1^q-u_1, \dots, u_{n-1}^q-u_{n-1})w= (sz_0, \theta_1(sz), \dots , \theta_{n-1}(sz))w. \label{eq.1 in pf of isom fw}
\end{align}
Now,  
$$ f_w(t, u_j, s) = (t, u_j', s), \quad (u_1', \dots, u_{n-1}') := (u_1, \dots, u_{n-1})\mu(c)'.$$
The left-hand side of \eqref{eq.1 in pf of isom fw} is clearly equal to 
$$ (t^N, (u_1')^{q}-u_1', \dots, (u_{n-1}')^{q}-u_{n-1}').$$
On the other hand, by \eqref{theta mu = theta(mu)}, the right-hand side of \eqref{eq.1 in pf of isom fw} is equal to
\begin{align*}
(sz_0, \theta_1(szw), \dots, \theta_{n-1}(szw)).
\end{align*}
Thus, we have
$$ (t^N, (u_1')^{q}-u_1', \dots, (u_{n-1}')^{q}-u_{n-1}') = (sz_0, \theta_1(szw), \dots, \theta_{n-1}(szw)).$$
This means $f_w(P) \in X_{\D,zw}$.
Of course, $f_w^{-1} = f_{w^{-1}}$, where note that $(\mu(c)')^{-1} = (\mu(c)^{-1})'$.
\end{proof}
\begin{rem}
Clearly, for $g, g' \in GL_d(\k)$, we have $L_{g'g} = L_{g'} \circ L_g$.
Note that 
\begin{align*}
& {\rm diag}(\mu(c_1), \dots, \mu(c_{p_i}))\widetilde P_\sigma \times {\rm diag}(\mu(c_1'), \dots, \mu(c_{p_i}') ) \widetilde P_{\sigma'} \\
& = {\rm diag}(\mu(c_1)\mu(c_{\sigma^{-1}(1)}'), \dots, \mu(c_{p_i})\mu(c_{\sigma^{-1}(p_i)}')) \widetilde P_{\sigma\sigma'}.
\end{align*}
By this, for $w , w' \in W_\D$, we also have $f_{w w'} = f_{w'} \circ f_{w}$.
On the other hand, for $h, h' \in H_\D$, if we choose $\sqrt[N]{h_{(i,0)}} \sqrt[N]{h_{(i,0)}'}$ as $\sqrt[N]{h_{(i,0)}h_{(i,0)}'}$, we have $R_{hh'} = R_{h'} \circ R_{h}$.
\end{rem}

\begin{rem}
When $\D=(1, \dots, 1)$, the Artin-Schreier curves are not necessary for the definition of $X_{\D,z}$.
Then, we can similar observations to Theorems \ref{isom between X_D} and \ref{isom fw} for the case when ${\rm char}(\k) = 0$, where we have to replace $\k_N$ with $\k( \{ \sqrt[N]{a} \mid a \in \k^*\})$.
\end{rem}

For $\chi \in \widehat{H_\D}$ and $w \in W_\D$, $\chi {}^tw \in \widehat{H_\D}$ is as in Subsection \ref{subsec. of Def. and Prop.}.
Recall the isomorphism $\widetilde \iota \colon H_\D \rightarrow G_\D = \prod_{i=1}^k \big( (\k^*) \times \k^{n_i-1} \big)^{p_i}$ by \eqref{isom iota tilde}.

\begin{thm}\label{N of X_D(z)} Let $\chi \in \widehat{H_\D}$.
\begin{enumerate}
\item \label{N of X_D(z)-2} For $g \in GL_d(\k)$,
$$ N(X_{\D,z}; \chi) = N(X_{\D,gz} ; \chi). $$ 

\item \label{N of X_D(z)-3} For $ h \in H_\D$,
$$ \chi(h) N(X_{\D,z} ; \chi) = N(X_{\D,zh} ; \chi).$$

\item \label{N of X_D(z)-4} For $w \in W_\D$, 
$$ N(X_{\D,z} ; \chi {}^t w) = N(X_{\D,zw} ; \chi). $$
%Here, $\pi := (\widetilde \iota)^{-1} \circ \pi' \circ \widetilde \iota$, where $\pi' \colon G \rightarrow G$ is an isomorphism given by, for $\big( (m_{i,j}, a_{i,j})_{j=1}^{p_i} \big)_{i=1}^k \in G$ $ (m_{i,j} \in \k^*, a_{i,j} \in \k^{n_i-1})$, 
%$$ m_{i,j} \longmapsto m_{i, \sigma_i(j)}, \quad a_{i,j} \mapsto a_{i,\sigma_i(j)} . $$
\end{enumerate}
\end{thm}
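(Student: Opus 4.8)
The plan is to deduce all three identities from the explicit isomorphisms of Theorems~\ref{isom between X_D} and~\ref{isom fw} by feeding each one into Lemma~\ref{Comparison of N by isom}, after recording precisely how the isomorphism intertwines the relevant group actions. Parts~\ref{N of X_D(z)-2} and~\ref{N of X_D(z)-4} live over $\k$ and are immediate; part~\ref{N of X_D(z)-3} is the delicate one, because $R_h$ is only defined over the extension $\k'$ and so must be combined with the base-change formula~\eqref{N of base change}.

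For~\ref{N of X_D(z)-2}, the isomorphism $L_g$ of Theorem~\ref{isom between X_D}\ref{isom Lg} alters only the coordinate $s$, whereas $G_\D$ acts solely on the $(t_i,u_i)$; hence $L_g$ commutes with the $G_\D$-action, i.e. it intertwines the actions via $\pi={\rm id}$, and Lemma~\ref{Comparison of N by isom} yields $N(X_{\D,z};\chi)=N(X_{\D,gz};\chi)$ at once. For~\ref{N of X_D(z)-4} I would use $f_w$ of Theorem~\ref{isom fw}, which again fixes $s$: it permutes the blocks by the $\sigma_i$ and right-multiplies each row $u_{(i,j)}$ by $\mu(c)'$. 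Since $G_\D$ acts by scaling the $t$'s and translating the $u$'s, $f_w$ intertwines the $G_\D$-action through an automorphism $\pi_w$ of $G_\D$. Splitting $w$ into its permutation part $\widetilde P_\sigma$ and its unipotent part ${\rm diag}(\mu(c_j))$ as in the proof of Theorem~\ref{isom fw}, a short check shows that on characters $\pi_w^\ast$ reindexes the blocks by $\sigma^{-1}$ and applies ${}^t\mu(c)'$ to the additive coordinates, i.e. $\pi_w^\ast\chi=\chi\,{}^tw$ exactly as defined in Subsection~\ref{subsec. of Def. and Prop.}. Lemma~\ref{Comparison of N by isom} then gives $N(X_{\D,z};\chi\,{}^tw)=N(X_{\D,zw};\chi)$.

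For~\ref{N of X_D(z)-3} I would enlarge the symmetry group to $G_\D\times{\rm Gal}(\k'/\k)$ acting on $X_{\D,z}\otimes\k'$ and $X_{\D,zh}\otimes\k'$, and analyse how $R_h$ intertwines it. The map $R_h$ still commutes with $G_\D$, since scaling the $t_i$ by the fixed constant $\sqrt[N]{h_{(i,0)}}$ and translating the $u_{(i,j)}$ by $r(\theta_j(h_i))$ commutes with the $G_\D$-action; but it fails to commute with ${\rm Gal}$, because those constants are not in $\k$: applying $e\in{\rm Gal}$ multiplies $\sqrt[N]{h_{(i,0)}}$ by $K_{h_{(i,0)}}(e)$ and adds $A_{\theta_j(h_i)}(e)$ to $r(\theta_j(h_i))$. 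This produces an intertwining $R_h\circ(g,e)=\pi(g,e)\circ R_h$ with $\pi(g,e)=(g\,c(e),e)$, where $c\colon{\rm Gal}\to G_\D$ is the homomorphism whose $i$-th component is $(K_{h_{(i,0)}},A_{\theta_1(h_i)},\dots,A_{\theta_{n_i-1}(h_i)})$. Lemma~\ref{Comparison of N by isom} then gives $N(X_{\D,z}\otimes\k';(\chi,\rho\cdot(\chi\circ c)))=N(X_{\D,zh}\otimes\k';(\chi,\rho))$; applying~\eqref{N of base change} to both sides and cancelling the nonzero factor $\rho(F)$ leaves $\chi(c(F))\,N(X_{\D,z};\chi)=N(X_{\D,zh};\chi)$. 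Finally $c(F)=\widetilde\iota(h)$, because the Frobenius $F$ satisfies $K_a(F)=a$ and $A_t(F)=t$, and $\chi(\widetilde\iota(h))=\chi(h)$ under the identification~\eqref{isom iota tilde}, which gives $\chi(h)N(X_{\D,z};\chi)=N(X_{\D,zh};\chi)$.

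The main obstacle is exactly this last bookkeeping in~\ref{N of X_D(z)-3}: pinning down the direction of the Galois twist and evaluating $c$ at Frobenius so as to recover precisely $\chi(h)$ rather than its inverse. A transparent alternative, which I would keep in reserve, bypasses the abstract twist and computes $\Lambda_g(X_{\D,zh})$ directly through the explicit $R_h$: using $(\sqrt[N]{h_{(i,0)}})^q=h_{(i,0)}\sqrt[N]{h_{(i,0)}}$ and $r(\theta_j(h_i))^q=r(\theta_j(h_i))+\theta_j(h_i)$, the condition ${\rm Frob}(Q)=g\cdot Q$ on $X_{\D,zh}$ becomes the condition ${\rm Frob}(P)=(g\,\widetilde\iota(h)^{-1})\cdot P$ on $X_{\D,z}$, whence $\Lambda_g(X_{\D,zh})=\Lambda_{g\,\widetilde\iota(h)^{-1}}(X_{\D,z})$; summing against $\chi$ and reindexing produces the factor $\chi(\widetilde\iota(h))=\chi(h)$, in agreement with the base-change computation.
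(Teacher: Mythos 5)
Your proposal is correct and follows essentially the same route as the paper: part (i) via $L_g$ and Lemma \ref{Comparison of N by isom} with $\pi={\rm id}$, part (iii) via the intertwining automorphism $\pi_w$ of $G_\D$ induced by $f_w$ with $\pi_w^\ast\chi'\circ\widetilde\iota=\chi\,{}^tw$, and part (ii) via exactly the paper's Galois twist $R_h\circ(g,e)=(g\,c(e),e)\circ R_h$ combined with \eqref{N of base change} and the evaluations $K_a(F)=a$, $A_t(F)=t$ giving $c(F)=\widetilde\iota(h)$ and hence the factor $\chi(h)$ with the correct sign. Your reserve argument computing $\Lambda_g(X_{\D,zh})=\Lambda_{g\,\widetilde\iota(h)^{-1}}(X_{\D,z})$ directly is also valid, but the primary argument already matches the paper's proof.
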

\begin{proof}
\ref{N of X_D(z)-2} The identity follows from the isomorphism $L_g$ and Lemma \ref{Comparison of N by isom} with $\pi = {\rm id}_{H_\D}$.

\ref{N of X_D(z)-3} We prove only for the case when $\D=(n)$ (the general case is similar).
Then, $X_{\D,z}$ is defined by the equation \eqref{eq.2 in pf of isom fw}. 
Let $h = [h_0, \dots, h_{n-1}] \in H_\D = J(n)$.
Recall  $R_h(t,u_j,s) = ( \sqrt[N]{h_0} t, u_j + r(\theta_j(h)), s)$.
%For short notation, write $\tau_j$ for $r(\theta_j(h))$.
For $g := (\xi, a_1, \dots, a_{n-1}) \in G_\D$ and $e \in {\rm Gal}$, put $g_e = (\xi', a_1', \dots, a_{n-1}') \in G_\D$, where
\begin{align*}
& \xi' := \xi \times K_{h_0}(e), \\
& a_j' := a_j + A_{\theta_j(h)}(e).
\end{align*}
Then, one shows (by computing on the structure sheaves)
\begin{equation} 
R_h  \circ (g,e)  = (g_e, e) \circ R_h. \label{eq. in pf of N of X_D(z)}
\end{equation}
Define $\pi \colon G_\D \times {\rm Gal} \rightarrow G_\D \times {\rm Gal}\, ;\, (g, e) \mapsto (g_e, e)$.
Then, $\pi$ is an automorphism and we have, for $\chi' \in \widehat{ G_\D}$ and $\rho \in \widehat{\rm Gal}$, 
\begin{equation}
N(X_{\D,z} \otimes \k_{pN} ; \pi^* (\chi', \rho)) = N(X_{\D,zh} \otimes \k_{pN}; (\chi',\rho)), \label{eq.3 in pf of N of X_D(z)} 
\end{equation}
by Lemma \ref{Comparison of N by isom}, \eqref{eq. in pf of N of X_D(z)} and Theorem \ref{isom between X_D} \ref{isom Rh}.
One shows $ \pi^*(\chi',\rho) (g, e) = \chi'(g_e) \rho(e)$.
Write $\chi' = (\a, \psi_1, \dots, \psi_{n-1})$, where $\a \in \khat$ and $\psi_j \in \widehat\k$.
Then, we have
$$ \chi' (g_e) = \chi'(g) \times \a\circ K_{h_0}(e) \times  \prod_{j=1}^{n-1} \big( \psi_j \circ A_{\theta_j(h)}(e) \big).$$
Therefore, if we put $\eta = (\a\circ K_{h_0} ) \times \prod_j ( \psi_j\circ A_{\theta_j(h)}) \in \widehat {\rm Gal}$, then
$$ \pi^*(\chi', \rho) = (\chi', \eta \rho).$$
Since $\chi = \chi' \circ \widetilde\iota$ for a suitable $\chi' \in \widehat{G_\D}$, we have
$$ \eta(F) N(X_{\D,z} ; \chi) = N(X_{\D,zh}; \chi), $$
by \eqref{eq.3 in pf of N of X_D(z)}, \eqref{N of base change} and \eqref{eq.2 in pr of N of X_D(z)}.
Noting that $K_x(F) = x$ for $x \in \k^*$ and $A_{y}(F) = y$ for $y \in \k$, we have
$$ \eta(F) = \a(h_0) \prod_j \psi_j (\theta_j(h)) = \chi' \circ \widetilde \iota (h) = \chi(h).$$
Hence, we have $\chi(h) N(X_{\D,z}; \chi) = N(X_{\D,zh};\chi)$.

\ref{N of X_D(z)-4} We only have to prove for the case when $\D=( \overbrace{n',\dots,n'}^m)$.
Then, $H_\D = J(n')^m$, $G_\D = (\k^* \times \k^{n'-1})^m$ and $W_\D = W(n')^m \rtimes \S_m$.
For $ g = (\xi_j, x_j)_{j=1}^m  \in G_\D$, where $x_j \in \k^{n'-1}$ and for $w = {\rm diag}(\mu(c_1), \dots, \mu(c_{m}) ) \widetilde P_\sigma \in W_\D$, define
$$ \pi_w ( (\xi_j, x_j)_j) = (\xi_{\sigma(j)}, x_{\sigma(j)}\mu(c_{\sigma(j)})' )_j. $$
Then, $\pi_w$ defines an isomorphism $G_\D \rightarrow G_\D$ and it satisfies that
$$ f_w \circ g = \pi(g) \circ f_w \quad (g \in G_\D). $$
Let $\chi = \chi' \circ \widetilde \iota$, where $\chi' \in \widehat{G_\D}$.
By the commutativity above and Lemma \ref{Comparison of N by isom}, we have
$$ N(X_{\D,z} ; \pi_w^* \chi') = N(X_{\D,zw}; \chi').$$
Recall that we can write $\chi' = (\a_j, \psi_{a_j})_j$, where $\a_j \in \widehat{\k^*}$, $a_j \in \k^{n'-1}$ and $\psi_{a_j} = (\psi_{a_{j,1}}, \dots, \psi_{a_{j,n'-1}})$, and then $\chi = (\a_j, a_j)_j$.
One shows
$$ \pi_w^* \chi' = (\a_{\sigma^{-1}(j)}, \psi_{a_{\sigma^{-1}(j)} {}^t \mu(c_j)'})_j,$$
where note that $\psi_{ a}(xM) = \psi_{ a \cdot {}^tM }(x)$ for $a, x \in \k^{n}$ and $M \in M(n,n;\k)$.
Therefore %noting that ${}^t w = {\rm diag}({}^t \mu(c_{\sigma(1)}), \dots, {}^t \mu(c_{\sigma(n'-1)}) ) \widetilde P_{\sigma^{-1}}$, 
we have 
$$ \pi_w^*\chi' \circ \widetilde \iota = \chi {}^t w.$$
Thus, by \eqref{eq.2 in pr of N of X_D(z)}, we obtain the theorem.
\end{proof}

\begin{proof}[Proof of Proposition \ref{action of GL and H on Phi} and Theorem \ref{sym of Phi}]
The proof is clear by Theorems \ref{N of X_D(z)-1} and \ref{N of X_D(z)}.
Indeed, by Theorem \ref{N of X_D(z)} \ref{N of X_D(z)-4}, we have
$$ \Phi_\D(\chi {}^t w; z) = N(X_{\D,z};\chi {}^t w) = N(X_{\D,zw};\chi) = \Phi_\D(\chi;zw).$$
Proposition \ref{action of GL and H on Phi} can be proved similarly.
\end{proof}

In Subsection \ref{particular cases}, we saw that certain transformation formulas for Gauss's, Kummer's, Appell-Lauricella and Humbert's functions are derived from the symmetry of general hypergeometric functions.
In the following subsections, we upgrade the formulas to isomorphisms among the varieties defined in Subsections \ref{subsec. of var. for one-variable functions}--\ref{subsec. of var. for Humbert's functions}.
\subsection{Gauss's function ($k=2, n=4, \D=(1,1,1,1)$)}\label{subsec. of X22}
Recall that $W_\D = \left\{ P_\sigma \mid \sigma \in \S_4\right\}$ and, for $z = (z_{ij}) \in M(2,4;\k)$, $X_{\D,z} \subset \mathbbm{A}^6 = \{(t,s):= (t_1, \dots, t_4, s_1, s_2)\}$ is defined by
$$ \left\{ \begin{array}{ll} t_i^N = s_1 z_{1i}+ s_2z_{2i} & (i=1, \dots, 4) \\ t_1\cdots t_4 \neq 0. \end{array} \right. $$
For $\l  \in \k^* - \{1\}$, fix an element $x = (x_{ij}) \in GL_2(\k)$ such that $\l = \dfrac{x_{11}x_{22}}{x_{21}x_{12}}$.
Put $z = \left( \begin{array}{c|c} x & I_2 \end{array} \right)$ (note that $[i\, j] \neq 0$ for any $i \neq j$, where $[i\, j]$ is as in Subsection \ref{particular cases}). 
Let $X_x \subset \mathbbm{P}^3 = \{  (u_1:u_2:v_1:v_2) \}$ be a projective algebraic variety defined by the equation
$$ \left\{ \begin{array}{l} u_1^N = x_{11}v_1^N+ x_{21} v_2^N \vspace{3pt}\\ u_2^N = x_{12}v_1^N + x_{22}v_2^N \\ u_1u_2v_1v_2 \neq 0. \end{array} \right. $$
Clearly, we have the morphism
$$ \p \colon X_{\D,z} \longrightarrow X_x \colon (t, s) \longmapsto (t_1 : t_2 : t_3: t_4). $$
Put
$$  \theta_0 = \bmat{-1 & 0 & -1 & 0 \\ 0 & 0 & 0 &  0 \\ 0 & 0 & 1 &  0 \\ 0 & -1 & 0 & 0 }.$$
Then, we have the morphism 
$$ \p_{\theta_0} \colon {}_2X_{2,\l} \otimes \k_N \longrightarrow X_x \otimes \k_N$$
given by
\begin{align*}
 (x_1, x_2, y_1, y_2)  \longmapsto & \big(\sqrt[N]{d_x}^{-1}  (x_1:x_2:y_1:y_2)  \big) * \theta_0 \\
 & = \Big( \dfrac{\sqrt[N]{x_{21}}}{x_1} : \dfrac{\sqrt[N]{x_{22}}}{y_2} : \dfrac{\sqrt[N]{x_{21}}y_1}{ \sqrt[N]{x_{11}}x_1} : 1 \Big),
\end{align*}
where $d_x = (x_{21}, x_{12}, x_{11}, x_{22})$.

\begin{rem}\label{rem of 2X2 and Xx}
As Remark \ref{rem of decomposition of mXn}, we have the decomposition
$$ {}_2X_{2,\l} \otimes \k_N = \bigsqcup_{\tau^N = \l } {}_2X_{2,\l}^\tau,$$
where
$$  {}_2X_{2,\l}^\tau := \{ (x_i,y_i) \in {}_2X_{2,\l} \otimes \k_N \mid \tau x_1x_2 =  y_1y_2 \}.$$
As we see below, the restriction of $\p_{\theta_0}$ induces the isomorphism 
$$\p_{\theta_0}^\tau \colon {}_2X_{2,\l}^\tau \longrightarrow X_x \otimes \k_N.$$
For $(x_i,y_i) \in {}_2X_{2,\l}^\tau$, one shows
\begin{equation}
(1,1,1,\tau') \big( ( \sqrt[N]{d_x}^{-1}(x_i,y_i) )*T \big) = (1,1,1,1), \label{tau' invariant}
\end{equation} 
where
$$ T := \left( \begin{array}{c|c} \mat{O} & \mat{1\\1\\-1\\-1} \end{array} \right) \in M(4,4;\Z) ,\quad \tau':= \tau \cdot (\sqrt[N]{d_x}*\bmat{1\\1\\-1\\-1}).$$
Thus, we have
$$\p_{\theta_0}^\tau  (x_i,y_i) = (1,1,1,\tau')\big( (\sqrt[N]{d_x}^{-1}(x_i:y_i))*\theta \big),$$
where $\theta := \theta_0 + T \in GL_4(\Z)$, and 
$$ (\p_{\theta_0}^\tau)^{-1} (u_1:u_2:v_1:1) = \dfrac{\sqrt[N]{d_x}}{(1,1,1,\tau')*\theta^{-1}} ( (u_1,u_2,v_1,1)*\theta^{-1} ).$$
\end{rem}

For each $\sigma \in \S_4$, write $ (z_{ij}^\sigma) = zP_\sigma$ and let
$$ z_\sigma = \bmat{ z_{13}^\sigma & z_{14}^\sigma \\ z_{23}^\sigma & z_{24}^\sigma}^{-1} zP_\sigma. $$
%Note that $(z_{\sigma_1})_{\sigma_2} = z_{\sigma_1\sigma_2}$ for $\sigma_1, \sigma_2 \in \S_4$.
One shows that the matrix $z_\sigma$ can be written as $\left( \begin{array}{c|c} (x_{ij}^\sigma)  & I_2 \end{array} \right)$, where $x_{ij}^\sigma \in \k^*$ and $\det (x_{ij}^\sigma) \neq 0$.
Put 
$$x_\sigma = (x_{ij}^\sigma) \in GL_2(\k), \quad \l_\sigma = \dfrac{x_{11}^\sigma x_{22}^\sigma}{x_{21}^\sigma x_{12}^\sigma} \in \k^*-\{1\}. $$
\begin{rem}
For $\sigma , \sigma' \in \S_4$, the third and fourth columns in $z_\sigma P_{\sigma'} $ is equal to $\bmat{z_{13}^\sigma& z_{14}^\sigma\\ z_{23}^\sigma & z_{24}^\sigma}^{-1} \bmat{z_{13}^{\sigma\sigma'} & z_{14}^{\sigma\sigma'}\\ z_{23}^{\sigma\sigma'} & z_{24}^{\sigma\sigma'}}$. 
Thus, one shows $(z_\sigma)_{\sigma'} = z_{\sigma\sigma'}$ and $(x_\sigma)_{\sigma'}=x_{\sigma\sigma'}$.
\end{rem}
By  Theorems \ref{isom fw} and \ref{isom between X_D} \ref{isom Lg}, we have the isomorphism 
$$f_\sigma \colon X_{\D,z} \longrightarrow X_{\D,z_\sigma} \, ;\, (t,s) \mapsto (t*P_\sigma, s') \quad ( s':=  \bmat{ z_{13}^\sigma & z_{14}^\sigma \\ z_{23}^\sigma & z_{24}^\sigma} s).$$
Clearly, 
$$g_\sigma \colon X_{x} \longrightarrow X_{x_\sigma}\, ;\, (u_1 : u_2 : v_1 : v_2) \mapsto (u_1 : u_2 : v_1 : v_2)*P_\sigma$$
is an isomorphism and it satisfies that $\p \circ f_\sigma = g_\sigma \circ \p$.

Define
$$ Q_\sigma = (\theta_0 P_\sigma M + T)\theta^{-1} \in M(4,4;\Z),$$
where
$$ M= \bmat{1 & 0 & 0 & 0 \\ 0 & 1 & 0 & 0 \\ 0 & 0 & 1 & 0 \\ -1 & -1 & -1 & 0} .$$

\begin{lem}\label{lem of Qsigma}$ $
For any $\sigma, \sigma' \in \S_4$, we have 
$$ Q_\sigma Q_{\sigma'} = Q_{\sigma\sigma'}.$$
In particular, $Q_\sigma \in GL_4(\Z)$.
\end{lem}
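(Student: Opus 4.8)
The plan is to treat the claim as a pure matrix identity in $M(4,4;\Z)$ and to exploit the rank-one structure of $T$ and of $M-I_4$, which forces all the permutation-dependent terms to collapse. Writing $v:={}^t(1,1,-1,-1)$ and $\mathbf 1:={}^t(1,1,1,1)$, I would first record that $T=v\,{}^te_4$ and $M=I_4-e_4\,{}^t\mathbf 1$, so that the elementary relations $Me_4=0$, ${}^t\mathbf 1\,M=0$, $M^2=M$ (i.e. $M$ is idempotent), and $\theta_0e_4=0$ (the fourth column of $\theta_0$ vanishes) all hold. The last of these already gives $\theta_0M=\theta_0-(\theta_0e_4)\,{}^t\mathbf 1=\theta_0$, whence $Q_{\rm id}=(\theta_0M+T)\theta^{-1}=(\theta_0+T)\theta^{-1}=\theta\theta^{-1}=I_4$.

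The second step is to extract the only information about $\theta^{-1}$ that the argument needs, without inverting $\theta$ explicitly. Since $\theta=\theta_0+T$ and $\theta e_4=\theta_0e_4+Te_4=v$, I get $\theta^{-1}v=e_4$, and therefore $\theta^{-1}T=\theta^{-1}v\,{}^te_4=e_4\,{}^te_4$ and $\theta^{-1}\theta_0=\theta^{-1}(\theta-T)=I_4-e_4\,{}^te_4$. Setting $E:=e_4\,{}^te_4$, this is all I will use about $\theta^{-1}$.

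Writing $A_\sigma:=\theta_0P_\sigma M+T$ so that $Q_\sigma=A_\sigma\theta^{-1}$, it suffices to prove $A_\sigma\theta^{-1}A_{\sigma'}=A_{\sigma\sigma'}$ and then to multiply on the right by $\theta^{-1}$. Using $\theta^{-1}\theta_0=I_4-E$ and $\theta^{-1}T=E$, I would expand
\[
A_\sigma\theta^{-1}A_{\sigma'}=(\theta_0P_\sigma M+T)\big[(I_4-E)P_{\sigma'}M+E\big].
\]
The summand ending in $E$ simplifies to $T$, because $ME=0$ and $TE=T$; and $A_\sigma(I_4-E)=\theta_0P_\sigma M$, because $A_\sigma E=T$. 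Thus the left-hand side equals $\theta_0P_\sigma MP_{\sigma'}M+T$, and since $P_\sigma P_{\sigma'}=P_{\sigma\sigma'}$ the whole identity reduces to $\theta_0P_\sigma(M-I_4)P_{\sigma'}M=0$. Here the remaining rank-one facts finish the job: $(M-I_4)P_{\sigma'}=-e_4\,{}^t\mathbf 1\,P_{\sigma'}=-e_4\,{}^t\mathbf 1=M-I_4$, using ${}^t\mathbf 1\,P_{\sigma'}={}^t\mathbf 1$ for a permutation matrix, and then $(M-I_4)M=M^2-M=0$. Hence $A_\sigma\theta^{-1}A_{\sigma'}=\theta_0P_{\sigma\sigma'}M+T=A_{\sigma\sigma'}$, giving $Q_\sigma Q_{\sigma'}=Q_{\sigma\sigma'}$; finally $Q_\sigma Q_{\sigma^{-1}}=Q_{\rm id}=I_4$ with integer entries shows $Q_\sigma\in GL_4(\Z)$.

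I expect the only real obstacle to be organizational rather than conceptual: one must resist computing $\theta^{-1}$ and $MP_{\sigma'}M$ entrywise and instead isolate the four structural relations ($\theta e_4=v$, $Me_4=0$, ${}^t\mathbf 1\,M=0$ together with ${}^t\mathbf 1\,P_{\sigma'}={}^t\mathbf 1$, and $M^2=M$) that make $P_{\sigma'}$ drop out of $MP_{\sigma'}M$. A purely geometric alternative would be to identify $Q_\sigma$ with the matrix implementing the isomorphism ${}_2X_{2,\l}^\tau\to{}_2X_{2,\l_\sigma}^{\tau'}$ assembled from $\p_{\theta_0}^\tau$, $g_\sigma$ and the relation $(x_\sigma)_{\sigma'}=x_{\sigma\sigma'}$, and to invoke $g_{\sigma'}\circ g_\sigma=g_{\sigma\sigma'}$; but the self-contained matrix computation above is shorter.
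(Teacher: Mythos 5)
Your proof is correct and is essentially the paper's own argument: both expand $Q_\sigma Q_{\sigma'}$ from the definition $Q_\sigma=(\theta_0 P_\sigma M+T)\theta^{-1}$ and cancel via the same identities ($\theta^{-1}T=e_4\,{}^te_4$, equivalently $T\theta^{-1}\theta_0=O_4$ and $M\theta^{-1}\theta_0=M$; $MP_{\sigma'}M=P_{\sigma'}M$, which is your $(M-I_4)P_{\sigma'}=M-I_4$ together with $M^2=M$; $Q_\sigma T=T$, which is your $A_\sigma E=T$; and $Q_{\rm id}=\theta\theta^{-1}=I_4$). The only difference is presentational: your rank-one factorizations $T=v\,{}^te_4$ and $M=I_4-e_4\,{}^t\mathbf{1}$ give clean, entrywise-free verifications of the facts the paper cites as ``one shows.''
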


\begin{proof}
One shows $ T\theta^{-1} \theta_0 = O_4$, $ M \theta^{-1} \theta_0 = M$ and $ M P_\sigma M = P_\sigma M$, and hence, we have
\begin{equation*}\label{eq. in pf. of lem of Qsigma}
Q_\sigma \theta_0 P_{\sigma'} M = \theta_0 P_{\sigma\sigma'} M.
\end{equation*}
Furthermore, one shows $Q_\sigma T = T$ for any $\sigma\in\S_4$, where note that $\theta^{-1}T = \left( \begin{array}{c|c} O & e_4 \end{array} \right)$.
By them, we have
\begin{align*}
Q_\sigma Q_{\sigma'} = Q_\sigma (\theta_0 P_{\sigma'} M + T)\theta^{-1} = (\theta_0 P_{\sigma \sigma'} M+ T ) \theta^{-1} = Q_{\sigma \sigma'}. 
\end{align*}
In particular, $Q_\sigma^{-1} = Q_{\sigma^{-1}}$, where note that $Q_{\rm id} =(\theta_0 M +T)\theta^{-1}= \theta \theta^{-1}= I_4$.
\end{proof}

For $\tau \in \{ \tau \mid \tau^N=\l \}$ and $\sigma \in \S_4$, put
$$ \tau_\sigma = \tau \cdot \dfrac{\sqrt[N]{d_x}*{}^t(1,1,-1,-1)}{\sqrt[N]{d_{x_\sigma}}*{}^t(1,1,-1,-1)} \in \{ \tau \mid \tau^N = \l_\sigma\}. $$
Then, by the isomorphism in Remark \ref{rem of 2X2 and Xx}, we have the isomorphism 
$$h_\sigma^\tau := (\p_{\theta_0}^{\tau_\sigma})^{-1} \circ g_\sigma \circ \p_{\theta_0}^\tau \colon  {}_2X_{2,\l}^\tau \longrightarrow {}_2X_{2,\l_\sigma}^{\tau_\sigma}.$$
Let ${\rm Gal} = {\rm Gal}(\k_N/\k)$ and put $G = (\k^*)^4$.
The following theorem is a geometric interpretation of the 24 formulas for Gauss's functions over $\k$ mentioned in Remark \ref{rem of Phi(1,1,1,1)}.

\begin{thm}\label{isom between X22 and X22-sigma}
Let $\sigma \in \S_4$ and $\tau \in \{\tau\mid \tau^N = \l\}$.
   \begin{enumerate}
      \item The isomorphism $h_\sigma^\tau$ is given by
      \begin{align*}
(x_i,y_i) 
 \longmapsto & \sqrt[N]{d_{x_{\sigma}}}  \Big( \big( \sqrt[N]{d_{x}}^{-1} (x_1,x_2,y_1,y_2)\big)*Q_\sigma\Big)  \\
 & = \dfrac{\sqrt[N]{d_{x_{\sigma}}}}{\sqrt[N]{d_{x}}* Q_\sigma} \Big( (x_1,x_2, y_1, y_2) * Q_\sigma \Big) .
 \end{align*}
 By the same correspondence, we have the isomorphism
 $$h_\sigma \colon {}_2X_{2,\l} \otimes \k_N \longrightarrow {}_2X_{2,\l_{\sigma}} \otimes \k_N.$$
%      \begin{equation*}
%         \begin{diagram}
%          % \node{X_{\D,z} \otimes \k_N} \arrow{s,l}{\p} \arrow{e,t}{f_\sigma} \node{X_\D(z_\sigma) \otimes \k_N} \arrow{s,r}{\p} \\
%            \node{X_{ x} \otimes \k_N} \arrow{e,t}{g_\sigma} \node{X_{x_{\sigma}} \otimes \k_N} \\
%            \node{{}_2X_{2,\l} \otimes \k_N} \arrow{n,l}{\p_\theta} \arrow{e,t}{h_\sigma} \node{X_{2,2}(\l_{\sigma}) \otimes \k_N} \arrow{n,r}{\p_\theta}
%         \end{diagram}.
%      \end{equation*

      \item For any $\sigma'\in \S_4$, we have $h_{\sigma'}^{\tau_\sigma} \circ h_{\sigma}^\tau = h_{\sigma \sigma'}^\tau$.
      
       \item Put $d_\sigma := d_{x_\sigma}/  (d_x * Q_\sigma) \in G$. 
       Define the automorphism $\pi_\sigma$ of $G\times {\rm Gal}$ by
       $$ (g, e) \mapsto ( K_{d_\sigma}(e)(g*Q_\sigma), e).$$
Then, for $g \in G$ and $e \in {\rm Gal}$, 
\begin{align*}
h_\sigma \circ (g,e)  = \pi_\sigma(g, e) \circ h_\sigma.
\end{align*}

      \item \label{cor of isom between X22 and X22-sigma} 
      For $\chi \in \widehat {G}$, we have
      $$ \chi(d_\sigma) N({}_2X_{2,\l}; \chi* {}^tQ_\sigma ) = N({}_2X_{2,\l_\sigma} ; \chi). $$  
   \end{enumerate}
\end{thm}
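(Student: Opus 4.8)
The plan is to treat parts (i)--(iv) in sequence, since (ii)--(iv) are essentially formal once the explicit shape of $h_\sigma$ in (i) is in hand. The heart of the argument is (i): computing the composite $h_\sigma^\tau = (\p_{\theta_0}^{\tau_\sigma})^{-1}\circ g_\sigma \circ \p_{\theta_0}^\tau$ coordinate by coordinate and recognizing it as multiplication $*\,Q_\sigma$ up to an explicit scalar vector.

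For (i) I would chase a point $(x_i,y_i)\in {}_2X_{2,\l}^\tau$ through the three maps using the monomial calculus from the start of Section 5 (the rules $(x*A)*B = x*(AB)$ and $(xy)*A=(x*A)(y*A)$, together with \eqref{character*matrices}). By Remark \ref{rem of 2X2 and Xx}, $\p_{\theta_0}^\tau$ is $(x_i,y_i)\mapsto (1,1,1,\tau')\big((\sqrt[N]{d_x}^{-1}(x_i:y_i))*\theta\big)$; the map $g_\sigma$ is multiplication by $P_\sigma$; and $(\p_{\theta_0}^{\tau_\sigma})^{-1}$ first dehomogenizes (this is the role of $M$, whose last row yields the division normalizing the fourth coordinate to $1$) and then rescales by $\sqrt[N]{d_{x_\sigma}}$ and $\tau_\sigma$. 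Collecting the exponent matrices produces $\theta\,P_\sigma\,M\,\theta^{-1}$ together with the $T$-correction built into the definition $Q_\sigma=(\theta_0P_\sigma M+T)\theta^{-1}$, and the identities $T\theta^{-1}\theta_0=O_4$, $M\theta^{-1}\theta_0=M$, $MP_\sigma M=P_\sigma M$ and $Q_\sigma T=T$ recorded in the proof of Lemma \ref{lem of Qsigma} are exactly what collapses this to $*\,Q_\sigma$. The scalar vectors $\sqrt[N]{d_x}$, $\tau'$, $\tau_\sigma$ must be carried along simultaneously; the definition of $\tau_\sigma$ is arranged so that they combine into the single prefactor $\sqrt[N]{d_{x_\sigma}}/(\sqrt[N]{d_x}*Q_\sigma)$ and the image lands in ${}_2X_{2,\l_\sigma}^{\tau_\sigma}$. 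Since this formula is independent of $\tau$ and $\tau\mapsto\tau_\sigma$ is a bijection of $\{\tau^N=\l\}$ onto $\{\tau^N=\l_\sigma\}$, the $h_\sigma^\tau$ glue along the decomposition of Remark \ref{rem of 2X2 and Xx} into the global isomorphism $h_\sigma$ on ${}_2X_{2,\l}\otimes\k_N$, proving the second assertion of (i).

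For (ii) I would argue from the definition $h_\sigma^\tau=(\p_{\theta_0}^{\tau_\sigma})^{-1}g_\sigma\p_{\theta_0}^\tau$: the inner factors telescope, since $\p_{\theta_0}^{\tau_\sigma}(\p_{\theta_0}^{\tau_\sigma})^{-1}=\mathrm{id}$ and $g_{\sigma'}g_\sigma=g_{\sigma\sigma'}$ (from $P_\sigma P_{\sigma'}=P_{\sigma\sigma'}$), so $h_{\sigma'}^{\tau_\sigma}\circ h_\sigma^\tau = (\p_{\theta_0}^{(\tau_\sigma)_{\sigma'}})^{-1}g_{\sigma\sigma'}\p_{\theta_0}^\tau$, and it remains only to check $(\tau_\sigma)_{\sigma'}=\tau_{\sigma\sigma'}$ from the definition of $\tau_\sigma$ and the relation $(x_\sigma)_{\sigma'}=x_{\sigma\sigma'}$; alternatively this matches the explicit formula of (i) through $Q_\sigma Q_{\sigma'}=Q_{\sigma\sigma'}$ of Lemma \ref{lem of Qsigma}. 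For (iii) I would verify the commutation $h_\sigma\circ(g,e)=\pi_\sigma(g,e)\circ h_\sigma$ directly on coordinates, following the template of the proof of Theorem \ref{N of X_D(z)}\ref{N of X_D(z)-3}. The $G$-part is immediate from $(gx)*Q_\sigma=(g*Q_\sigma)(x*Q_\sigma)$, which supplies the factor $g*Q_\sigma$. The subtlety is the Galois part: applying $e\in{\rm Gal}$ to the $\k_N$-valued prefactor $\sqrt[N]{d_{x_\sigma}}/(\sqrt[N]{d_x}*Q_\sigma)=\sqrt[N]{d_\sigma}$ produces precisely the scalar $K_{d_\sigma}(e)$, which is the twist appearing in $\pi_\sigma(g,e)=(K_{d_\sigma}(e)(g*Q_\sigma),e)$; along the way I would confirm that $\pi_\sigma$ is a genuine automorphism of $G\times{\rm Gal}$.

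Finally, (iv) is a formal consequence of (iii), exactly as in the proof of Theorem \ref{N of X_D(z)}\ref{N of X_D(z)-3}. Applying Lemma \ref{Comparison of N by isom} to $h_\sigma$ with the automorphism $\pi_\sigma$ gives $N({}_2X_{2,\l}\otimes\k_N;\pi_\sigma^*(\chi,\rho))=N({}_2X_{2,\l_\sigma}\otimes\k_N;(\chi,\rho))$; computing $\pi_\sigma^*(\chi,\rho)(g,e)=\chi(K_{d_\sigma}(e))\,(\chi*{}^tQ_\sigma)(g)\,\rho(e)$ via \eqref{character*matrices} identifies it with $(\chi*{}^tQ_\sigma,\,\eta\rho)$, where $\eta(e)=\chi(K_{d_\sigma}(e))$; taking $\rho$ trivial and invoking \eqref{N of base change} together with $K_{d_\sigma}(F)=d_\sigma$ yields $\chi(d_\sigma)N({}_2X_{2,\l};\chi*{}^tQ_\sigma)=N({}_2X_{2,\l_\sigma};\chi)$. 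I expect the main obstacle to be the bookkeeping in (i)---keeping the several $N$th-root scalars mutually consistent through the dehomogenization while the exponent matrices collapse to $Q_\sigma$---together with correctly isolating the $K_{d_\sigma}(e)$ factor in (iii).
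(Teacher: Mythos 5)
Your proposal is correct and follows essentially the same route as the paper's proof: the same explicit chase of $(x_i,y_i)$ through $(\p_{\theta_0}^{\tau_\sigma})^{-1}\circ g_\sigma\circ \p_{\theta_0}^{\tau}$ using the monomial calculus and $\tau'=\tau_\sigma'$ for (i), the gluing over $\tau$-components, Lemma \ref{lem of Qsigma} (or the equivalent telescoping with $(\tau_\sigma)_{\sigma'}=\tau_{\sigma\sigma'}$) for (ii), the structure-sheaf commutation isolating $K_{d_\sigma}(e)$ for (iii), and Lemma \ref{Comparison of N by isom} with \eqref{character*matrices}, \eqref{N of base change} and $K_{d_\sigma}(F)=d_\sigma$ for (iv). One cosmetic remark: the identities $T\theta^{-1}\theta_0=O_4$, $M\theta^{-1}\theta_0=M$, $MP_\sigma M=P_\sigma M$, $Q_\sigma T=T$ are used in the paper only to prove the group law $Q_\sigma Q_{\sigma'}=Q_{\sigma\sigma'}$ needed in (ii), whereas in (i) the collapse to $*\,Q_\sigma$ is immediate from the definition $Q_\sigma=(\theta_0 P_\sigma M+T)\theta^{-1}$ once one uses $(u_i:v_i)*M=(u_i:v_i)$ on $\P^3$ and inserts $T$ via \eqref{tau' invariant}.
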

\begin{proof}
      (i) Noting $(u_i:v_i)*M=(u_i:v_i)$ for $(u_i:v_i) \in \P^3$ and \eqref{tau' invariant}, we have
      \begin{align*}
       g_\sigma \circ \p_{\theta_0}^\tau (x_i,y_i) 
       & = (\sqrt[N]{d_x} ^{-1}(x_i:y_i) )*\theta_0 P_\sigma M\\
       & = (1,1,1,\tau')\big( (\sqrt[N]{d_x}^{-1}(x_i:y_i) )*(\theta_0 P_\sigma M + T) \big),
      \end{align*}
      where $\tau'$ is as in Remark \ref{rem of 2X2 and Xx}.
      Therefore,
      \begin{align*}
       h_\sigma^\tau (x_i,y_i) 
       & = \dfrac{\sqrt[N]{d_{x_\sigma}}}{(1,1,1,\tau_\sigma')*\theta^{-1}} \cdot (1,1,1,\tau')*\theta^{-1}\cdot  \big( (\sqrt[N]{d_x}^{-1}(x_i,y_i) )*Q_\sigma \big),
      \end{align*}   
      where $\tau_\sigma' := \tau_\sigma ( \sqrt[N]{d_{x_\sigma}}*{}^t(1,1,-1,-1) )$.     
      Noting $\tau' = \tau_\sigma'$ by definition, we obtain 
      $$h_\sigma^\tau (x_i,y_i) = \sqrt[N]{d_{x_\sigma}}( (\sqrt[N]{d_x}^{-1}(x_i,y_i))*Q_\sigma ).$$
      Clearly, the isomorphisms $h_\sigma^\tau$ ($\tau \in \{ \tau \mid \tau^N=\l\}$) induce the isomorphism $h_\sigma$.
      
       (ii) 
       Noting $(x_{\sigma})_{\sigma'} = x_{\sigma \sigma'}$ and Lemma \ref{lem of Qsigma},
       we have
       \begin{align*}
       h_{\sigma'}^{\tau_\sigma} \circ h_{\sigma}^\tau (x_i,y_i)
       & = \sqrt[N]{d_{x_{\sigma\sigma'}}} \Big( \big(\sqrt[N]{d_{x_{\sigma}}}^{-1} h_{\sigma}^\tau(x_i,y_i) \big ) *Q_{\sigma'} \Big)\\
       & = \sqrt[N]{d_{x_{\sigma\sigma'}}} \Big( \big(\sqrt[N]{d_x}^{-1} (x_i,y_i)\big) *Q_{\sigma\sigma'} \Big) = h_{\sigma\sigma'}^{\tau}(x_i,y_i).
       \end{align*}

       (iii) We can prove this by computing on the structure sheaves.
%Here, note that $\sqrt[N]{d_\sigma}/ e^{-1}(\sqrt[N]{d_\sigma}) = e(\sqrt[N]{d_\sigma}/ e^{-1}(\sqrt[N]{d_\sigma}) ) = e(\sqrt[N]{d_\sigma})/\sqrt[N]{d_\sigma} = K_{d_\sigma}(e)$.

(iv) By Lemma \ref{Comparison of N by isom} and (iii), we have for $(\chi,\rho) \in \widehat{G\times {\rm Gal}}$,
$$ N({}_2X_{2,\l} \otimes \k_N ; \pi_\sigma^*(\chi, \rho) )  = N({}_2X_{2,\l_\sigma}\otimes \k_N ; (\chi, \rho)) .$$
Noting \eqref{character*matrices}, we have $\pi_\sigma^*(\chi,\rho) = (\chi*{}^tQ_\sigma, \eta\rho)$, where $\eta = \chi \circ K_{d_\sigma}$.
Thus, we complete the proof by \eqref{N of base change} and $K_{d_\sigma}(F) = d_\sigma$.
\end{proof}

For example, when $\sigma = (1 \, 3) \in \S_4$, we have
$$ Q_\sigma = \bmat{ 1 & 0 & 0 & 0 \\ 0 & 1 & 0 & 0 \\ -1 & -1 & -1 & 0 \\ 0 & 0 & 0 & 1 }. $$
Hence, for $\chi = (\a_1,\a_2,\b_1,\b_2)$, 
$$\chi * {}^tQ_\sigma = (\a_1, \a_2, \ol{\a_1\a_2\b_1}, \b_2).$$
If we take $x = \bmat{1 & 1 \\ -1 & -\l}$, then $x_\sigma = \bmat{1 & 1 \\ 1 & 1-\l}$ and $\l_\sigma = 1-\l$.
Therefore, 
$$ d_\sigma = \dfrac{d_{x_\sigma}}{d_x * Q_\sigma} = \dfrac{( 1, 1, 1, 1-\l)}{(-1, 1, 1, -\l)*Q_\sigma} = (-1, 1, 1, \dfrac{\l-1}{\l} ).$$
By Theorem \ref{N of X_mn} and Theorem \ref{isom between X22 and X22-sigma} \ref{cor of isom between X22 and X22-sigma}, we have 
$$  \hF{\a_1, \a_2}{\a_1\a_2\b_1, \ol{\b_2}}{\l} = \a_1(-1)\dfrac{j(\a_1,\b_1)}{j(\a_1, \ol{\a_1\a_2\b_1})} \b_2\Big( \dfrac{\l}{\l-1} \Big)\hF{\a_1,\a_2}{\ol{\b_1},\ol{\b_2}}{1-\l}, $$
when $\a_1\b_1,\a_2\b_2, \a_2\b_1  \neq \e$ (the identity is given in \cite[Theorem 3.15]{Otsubo} when $\b_2 = \e$).

\begin{rem}[The reducible case $\l=1$]
When $\l=1$, the following analogue of Euler-Gauss summation formula is well-known (cf. \cite[Theorem 4.3]{Otsubo}):
$$ \hFF{2}{1}{\a,\b}{\c}{1} = \dfrac{j(\a, \b\ol\c)}{j(\a,\ol\c)}\quad (\a \neq \c \mbox{ and } \b \neq \e).$$
We see a geometric interpretation of the relation, where note that \eqref{N of Fer}.
For $a,b \in \k^*$, put
$$ {}_2^{}X_{2,1}^{a,b}  := \{ (x_i,y_i) \in {}_2X_{2,1} \mid y_2=ax_1, y_1=bx_2\} \subset {}_2X_{2,1}.$$
If $(x_i,y_i) \in {}_2X_{2,1}$, then we have $x_1^N = y_2^N$ and $x_2^N = y_1^N$, and hence 
$${}_2X_{2,1} = \bigsqcup_{a,b \in \k^*} {}_2^{} X_{2,1}^{a,b}.$$
We have the isomorphism
$$ i^{a,b} \colon     Fer_2^* \longrightarrow {}_2^{} X_{2,1}^{a,b} \, ;\, (u,v) \longmapsto  ( u, v, bv, au ).$$
The subgroup $G' = \{ (\xi,\xi',\xi',\xi) \mid \xi,\xi' \in \k^* \} \cong (\k^*)^2$ of $G$ acts on ${}_2^{} X_{2,1}^{a,b}$.
We have, for $\chi \in \widehat G$,
\begin{align*}
N({}_2X_{2,1};\chi)
& = \dfrac{1}{\# G}\sum_{g \in G} \chi(g) \#\{ P \in {}_2X_{2,1}(\ol\k) \mid {\rm Frob} (P) = g \cdot P\} \\
 &= \dfrac{1}{\# G}\sum_{g \in G} \chi(g) \sum_{a,b \in \k^*} \#\{ P \in {}_2^{} X_{2,1}^{a,b}(\ol\k) \mid {\rm Frob}(P)= g\cdot P \} \\
 &= \dfrac{1}{\# G}\sum_{a,b } \sum_{g' \in G'} \chi|_{G'}(g') \#\{ P \in {}_2^{} X_{2,1}^{a,b} (\ol\k) \mid  {\rm Frob}(P) = g'\cdot P\}\\
 &= \dfrac{\# G'}{\# G} \sum_{a,b} N({}_2^{} X_{2,1}^{a,b}; \chi|_{G'}).
\end{align*}
Here, $\chi|_{G'} \in \widehat{G'}$ is the restriction of $\chi$.
Let $\pi \colon (\k^*)^2 \rightarrow G' \, ;\, (\xi , \xi' ) \mapsto (\xi, \xi', \xi', \xi)$ be an isomorphism.
Then, one shows $i^{a,b} \circ (\xi,\xi') = \pi(\xi,\xi') \circ i^{a,b}$ for all $(\xi,\xi') \in (\k^*)^2$.
Thus, by Lemma \ref{Comparison of N by isom} and the equalities above, we have 
$$N({}_2X_{2,1};\chi) = N(Fer_2^*; \pi^*(\chi|_{G'})).$$
If we put $\chi = (\a, \b, \ol\c,\e)$ ($\a\neq \c, \b \neq \e$), then the identity is equivalent to Euler-Gauss summation formula by $\pi^*(\chi|_{G'}) = (\a, \b\ol\c)$.
\end{rem}

\begin{rem}
Except for the part of the number of $\k$-rational points, the argument of this subsection works when ${\rm char}(\k) = 0$.
Then, we have to replace $\k_N$ with $\k(\{ \sqrt[N]{a} \mid a \in \k^* \})$.
The same is true for Subsections \ref{subsec. of X_D} and \ref{subsec. of X_A}.
\end{rem}

\subsection{Kummer's function ($k=2, n=4, \D=(1,1,2)$)}\label{subsec. of X12}
Recall that
$$W_\D = \left\{ {\rm diag}(P_\sigma, \mu(c)) \in GL_4(\k) \middle|\, \sigma \in \S_2, c \in \k^* \right\},\quad \mu(c) = \bmat{1 & 0 \\ 0 & c},$$
and that, for $z = (z_1, z_2, z_3, z_4) \in M(2,4;\k)$, $X_{\D,z} \subset \mathbbm{A}^6 = \{(t_1,t_2,t_3,u,s_1,s_2)\}$ is defined by 
$$ (t_1^N, t_2^N, t_3^N, t_3^N(u^q - u) ) = (sz_1, sz_2, sz_3, sz_4), \quad \prod_{i=1}^3 t_i \neq 0. $$
For $ \l \in \k^*$, fix an element $x=(x_{ij}) \in M(2,2;\k)$ such that $\l= \dfrac{x_{21}x_{12}}{x_{11}}$, and let $X_x \subset \P^2\times \mathbbm{A}^1 = \{(u_1:u_2:u_3, v)\}$ be a variety defined by
$$ \begin{cases} u_1^N = x_{11}u_2^N + x_{21}u_3^N \vspace{3pt} \\ u_3^N(v^q-v) = x_{12}u_2^N + x_{22}u_3^N \\ u_1u_2u_3 \neq 0. \end{cases}$$
For $z = \bmat{x_{11} & 1 & 0 & x_{12} \\ x_{21} & 0 & 1 & x_{22}}$, there is a morphism
$$ \p \colon X_{\D,z} \longrightarrow X_x \, ;\, ((t_i),u,s) \longmapsto ( t_1:t_2:t_3, u). $$
Recall
$$ {}_1X_{2,\l} \colon \begin{cases} x^N + y^N =1 \\ t^q-t = z^N \\ \l x^N = y^Nz^N \\ xyz \neq 0. \end{cases}$$
Define
$$ \theta_0 = \bmat{0 & 1 &0\\ -1 & -1 &0\\ 0 & 0&0}, \quad d_x = ( x_{11}, x_{21}, x_{12}).$$
Then, we have the morphism $ \p_{\theta_0} \colon {}_1X_{2,\l} \otimes \k_{pN} \longrightarrow X_x \otimes \k_{pN}$ given by 
\begin{align*}
(x,y,z,t) \longmapsto (u_1: u_2:u_3, v) 
& = \big( ( \sqrt[N]{d_x}^{-1} (x:y:z) )*\theta_0, t + r(x_{22}) \big).
%& = \Big( \dfrac{\sqrt[N]{x_{21}}}{ y}: \dfrac{\sqrt[N]{x_{21}}}{\sqrt[N]{x_{11}}}\cdot \dfrac{x}{y}:1, t + r(x_{22}) \Big).
\end{align*}

\begin{rem}\label{rem of 1X2 and Xx}
We have the decomposition
$$ {}_1X_{2,\l} \otimes \k_{pN} = \bigsqcup_{\tau^N=\l} {}_1X_{2,\l}^\tau,$$
where
$$ {}_1X_{2,\l}^\tau := \{ (x,y,z,t) \in {}_1X_{2,\l} \otimes \k_{pN} \mid \tau x = yz\}.$$
Similarly to Remark \ref{rem of 2X2 and Xx}, the restriction of $\p_{\theta_0}$ induces the isomorphism 
$$\p_{\theta_0}^\tau \colon {}_1X_{2,\l}^\tau \rightarrow X_x \otimes \k_{pN}\, ;\, (x,y,z,t) \mapsto (u_1:u_2:u_3,v),$$
where
$$(u_1:u_2:u_3) = (1,1,\tau')\big( (\sqrt[N]{d_x}^{-1} (x:y:z) )*\theta \big),\quad v = t + r(x_{22}).$$
Here, 
$$ \theta := \bmat{0 & 1 & 1 \\ -1 & -1 & -1 \\ 0 & 0 & -1}, \quad \tau' := \tau ( \sqrt[N]{d_x}*\bmat{1\\-1\\-1} ).$$
Then, $(\p_{\theta_0}^\tau)^{-1} (u_1:u_2:1, v) = (x,y,z,t)$, where
$$ (x,y,z) = \dfrac{\sqrt[N]{d_x}}{(1,1,\tau')*\theta^{-1}} ( (u_1,u_2,1)*\theta^{-1} ),\quad t = v - r(x_{22}).$$
\end{rem}

For $w = {\rm diag}(P_\sigma, \mu(c)) \in W_\D$, write $(z_{ij}^w)=zw$.
Let $x_w:=(x_{ij}^w)$ be the matrix given by 
$$z_w:=\bmat{z_{12}^w & z_{13}^w \\ z_{22}^w & z_{23}^w}^{-1}zw = \bmat{x_{11}^w & 1 & 0 & x_{12}^w \\ x_{21}^w & 0 & 1 & x_{22}^w}.$$
Note that $x_{11}^w, x_{21}^w, x_{12}^w \neq 0$.
By Theorem \ref{isom fw} and Theorem \ref{isom between X_D} \ref{isom Lg}, we have the isomorphism
$$ f_w \colon X_{\D,z} \longrightarrow X_{\D,z_w} \, ;\, ((t_i),u,s) \longmapsto ( (t_1,t_2)*P_\sigma, t_3, cu, s') \quad (s'= \bmat{z_{12}^w & z_{13}^w \\ z_{22}^w & z_{23}^w} s). $$
We also have the isomorphism
$$ g_w \colon X_x \longrightarrow X_{x_w}\, ;\,  (u_1:u_2:u_3, v) \longmapsto ( (u_1:u_2)*P_\sigma:u_3, cv),$$
and it satisfies that $g_w \circ \p = \p \circ f_w$.
Put
$$ \l_w := \dfrac{x_{21}^w x_{12}^w}{x_{11}^w} = {\rm sgn}(\sigma) c\l, \quad Q_\sigma = \begin{cases} I_3 & (\sigma = {\rm id}) \vspace{3pt} \\ \bmat{ -1 & -1 & -1 \\ 0 & 1 & 0 \\ 0 & 0 & 1} & (\sigma = (1\, 2)). \end{cases}$$

%
%For $(x,y,z,t) \in {}_1X_{2,\l}$, define $h_w(x,y,z,t)$ by
%$$ (x,y,z) \mapsto \dfrac{\sqrt[N]{d_{x_w}}}{\sqrt[N]{d_x} * Q_\sigma} \big( ( x,y,z )*Q_\sigma \big), \quad t \mapsto ct + r(c x_{22}-x_{22}^w).  $$
For $\tau \in \{ \tau \mid \tau^N=\l\}$ and $w \in W_\D$, put
$$ \tau_w = \tau \cdot \dfrac{\sqrt[N]{d_x}*{}^t (1,-1,-1)}{\sqrt[N]{d_{x_w}}*{}^t (1,-1,-1)} \in \{ \tau \mid \tau^N = \l_w\}.$$
By the isomorphism in Remark \ref{rem of 1X2 and Xx}, we have the isomorphism
$$ h_w^\tau := (\p_{\theta_0}^{\tau_w})^{-1} \circ g_w \circ \p_{\theta_0}^\tau \colon {}_1X_{2,\l}^\tau \longrightarrow {}_1X_{2,\l}^{\tau_w}.$$
Put $G = (\k^*)^3 \times \k$ and ${\rm Gal} = {\rm Gal}(\k_{pN}/\k)$.
The following theorem when $\sigma = (1\, 2)$ gives a geometric interpretation of the formula \eqref{Kummer prod}.
\begin{thm}\label{N of X12}
Let $w={\rm diag}(P_\sigma, \mu(c)) \in W_\D$ and $\tau \in \{\tau \mid \tau^N=\l\}$.
\begin{enumerate}
\item \label{N of X12 (i)} The isomorphism $h_w^\tau \colon (x,y,z,t) \mapsto (x',y',z',t')$ is given by
$$ (x',y',z') = \dfrac{\sqrt[N]{d_{x_w}}}{\sqrt[N]{d_x} * Q_\sigma} \big( ( x,y,z )*Q_\sigma \big), \quad t' = ct + r(c x_{22}-x_{22}^w).  $$
By the same correspondence, we have the isomorphism
$$ h_w \colon {}_1X_{2,\l} \otimes \k_{pN} \longrightarrow {}_1X_{2,\l_w} \otimes \k_{pN}.$$

\item For any $w' \in W_\D$, we have $h_{w'}^{\tau_w} \circ h_w^{\tau} = h_{ww'}^\tau$.

\item \label{N of X12 (iii)} Put $d_w = d_{x_w}/(d_x*Q_\sigma) \in (\k^*)^3$.
Define the automorphism $\pi_w $ of $G \times {\rm Gal}$ by 
$$ \big( \bm\xi,a , e \big) \longmapsto \big(   K_{d_w}(e)(\bm\xi*Q_\sigma), \, ca+A_{cx_{22}-x_{22}^w}(e) , e\big) \quad (\bm\xi \in (\k^*)^3, a \in \k).$$
Then, for $(g,e) \in G \times {\rm Gal}$,
$$ h_w \circ (g,e) = \pi_w(g,e) \circ h_w. $$

\item  \label{N of X12 (iv)} For $\chi = ((\a_i)_i,\psi) \in \widehat{G}$, put
$$ \chi_w = \big( (\a_i)_i*{}^t Q_\sigma, \psi_c \big).$$
Then, we have
$$ \chi(d_w, cx_{22}-x_{22}^w) N({}_1X_{2,\l}; \chi_w) = N({}_1X_{2,\l_w}; \chi). $$
\end{enumerate}
\end{thm}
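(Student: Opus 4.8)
The plan is to follow the template of Theorem~\ref{isom between X22 and X22-sigma}, since ${}_1X_{2,\l}$ plays the role of ${}_2X_{2,\l}$ and the only genuinely new feature is the Artin--Schreier coordinate $t$ carried by the block $\mu(c)$ of $\D=(1,1,2)$. For (i) I would compute the composite $h_w^\tau=(\p_{\theta_0}^{\tau_w})^{-1}\circ g_w\circ\p_{\theta_0}^\tau$ directly, separating the multiplicative triple $(x,y,z)$ from the additive coordinate $t$. On the projective part I use the explicit formulas for $\p_{\theta_0}^\tau$ and $(\p_{\theta_0}^\tau)^{-1}$ from Remark~\ref{rem of 1X2 and Xx} together with $(x*A)*B=x*(AB)$, so that the three maps contribute $\theta$, the extension of $P_\sigma$ fixing $u_3$, and $\theta^{-1}$; the key point is that this composite matrix equals $Q_\sigma$, a two-case check since $\sigma\in\S_2$. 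The normalizing scalar $\tau'$ introduced by $\p_{\theta_0}^\tau$ must cancel the $\tau_w'$ produced by $(\p_{\theta_0}^{\tau_w})^{-1}$, which is exactly why $\tau_w$ was defined with the quotient $\sqrt[N]{d_x}*{}^t(1,-1,-1)\big/\sqrt[N]{d_{x_w}}*{}^t(1,-1,-1)$; verifying $\tau'=\tau_w'$ is the analogue of the identity used in Theorem~\ref{isom between X22 and X22-sigma}(i), and it leaves precisely the factor $\sqrt[N]{d_{x_w}}/(\sqrt[N]{d_x}*Q_\sigma)$. For the additive coordinate, $v=t+r(x_{22})\mapsto cv=ct+r(cx_{22})\mapsto t'=cv-r(x_{22}^w)$, so $t'=ct+r(cx_{22}-x_{22}^w)$, using that $t\mapsto r(t)$ is additive and $c\,r(y)=r(cy)$.

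For (ii) I would use the relation $(x_w)_{w'}=x_{ww'}$ (the same reduction via the columns $2,3$ submatrix as in the remark preceding Theorem~\ref{isom between X22 and X22-sigma}) together with $Q_\sigma Q_{\sigma'}=Q_{\sigma\sigma'}$ on $\S_2$. On the multiplicative part the argument is identical to Theorem~\ref{isom between X22 and X22-sigma}(ii). On the additive part one composes $t\mapsto ct+r(cx_{22}-x_{22}^w)$ with the shift of $h_{w'}^{\tau_w}$, namely $t\mapsto c't+r(c'x_{22}^w-x_{22}^{ww'})$, and uses additivity of $r$ to telescope the shift to $r(cc'x_{22}-x_{22}^{ww'})$, matching $h_{ww'}^\tau$. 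For (iii) I would check $h_w\circ(g,e)=\pi_w(g,e)\circ h_w$ by computing on structure sheaves, as in Theorem~\ref{isom between X22 and X22-sigma}(iii): the factor $K_{d_w}(e)$ records the action of $e$ on the roots $\sqrt[N]{d_{x_w}}/(\sqrt[N]{d_x}*Q_\sigma)$ via \eqref{character*matrices}, while the new term $A_{cx_{22}-x_{22}^w}(e)$ records the action of $e$ on $r(cx_{22}-x_{22}^w)$ in the additive coordinate.

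Finally, for (iv) I would apply Lemma~\ref{Comparison of N by isom} to $\pi_w$ to obtain $N({}_1X_{2,\l}\otimes\k_{pN};\pi_w^*(\chi,\rho))=N({}_1X_{2,\l_w}\otimes\k_{pN};(\chi,\rho))$ for $(\chi,\rho)\in\widehat{G\times{\rm Gal}}$, and then unwind $\pi_w^*(\chi,\rho)$ using \eqref{character*matrices}. Writing $\chi=((\a_i)_i,\psi)$, the $(\bm\xi,a)$-dependence collapses to $\chi_w=((\a_i)_i*{}^tQ_\sigma,\psi_c)$, while the $e$-dependence collects into a character $\eta\in\widehat{\rm Gal}$ with $\eta(e)=((\a_i)_i\circ K_{d_w})(e)\cdot\psi(A_{cx_{22}-x_{22}^w}(e))$, so that $\pi_w^*(\chi,\rho)=(\chi_w,\eta\rho)$. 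Applying \eqref{N of base change} on both sides cancels the common factor $\rho(F)$ and leaves $\eta(F)\,N({}_1X_{2,\l};\chi_w)=N({}_1X_{2,\l_w};\chi)$; since $K_x(F)=x$ and $A_y(F)=y$, one has $\eta(F)=\chi(d_w,cx_{22}-x_{22}^w)$, which is the assertion.

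The main obstacle I anticipate is the bookkeeping in (i): tracking the $N$th-root scalars through the projective-to-affine passage while simultaneously keeping the Artin--Schreier shift consistent, and in particular confirming the cancellation $\tau'=\tau_w'$ that makes the two normalizations compatible. Once that is settled, (ii)--(iv) are a direct transcription of the Gauss case, with the only extra input being the additivity of $t\mapsto r(t)$ and the evaluations $K_x(F)=x$, $A_y(F)=y$ at Frobenius.
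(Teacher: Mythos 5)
Your proposal is correct and follows essentially the same route as the paper: the paper also proves (i) by composing the explicit maps of Remark \ref{rem of 1X2 and Xx} with $g_w$ and reducing the multiplicative part to the matrix identity $(\theta_0 P_\sigma' M + T)\theta^{-1} = Q_\sigma$ (a two-case check, with the cancellation $\tau' = \tau_w'$ built into the definition of $\tau_w$), while the additive coordinate gives $t' = ct + r(cx_{22}-x_{22}^w)$ exactly as you compute; (ii) via $(x_w)_{w'}=x_{ww'}$ and $Q_\sigma Q_{\sigma'}=Q_{\sigma\sigma'}$, (iii) on structure sheaves, and (iv) via Lemma \ref{Comparison of N by isom}, the pullback $\pi_w^*(\chi,\rho)=(\chi_w,\eta\rho)$, \eqref{N of base change}, and $K_x(F)=x$, $A_y(F)=y$. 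Your write-up is in fact somewhat more detailed than the paper's proof, which largely defers to the argument of Theorem \ref{isom between X22 and X22-sigma}.
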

\begin{proof}
(i) Put $T = \theta - \theta_0$ and put
$$ M = \bmat{1 & 0 & 0 \\ 0 & 1 & 0 \\ -1 & -1 & 0}, \quad P_\sigma' = \left( \begin{array}{c|c}
P_\sigma & \mat{0\\0} \\
\hline 
\mat{0 & 0} & 1
\end{array} \right) .$$
One shows $\theta_0 P_\sigma' M + T = Q_\sigma$ for each $\sigma \in \S_2$.
By this, we can prove similarly to the proof of Theorem \ref{isom between X22 and X22-sigma} (i).

(ii) This follows from $(x_w)_{w'} = x_{ww'}$ and $Q_\sigma Q_{\sigma'} = Q_{\sigma \sigma'}$ ($\sigma, \sigma' \in \S_2$).

(iii) We can check this  by computing on the structure sheaves.

(iv) By Lemma \ref{Comparison of N by isom} and (iii), we have
$$ N({}_1X_{2,\l} \otimes \k_{pN}; \pi_w^*(\chi,\rho)) = N({}_1X_{2,\l_w} \otimes \k_{pN};(\chi,\rho)) \quad ( (\chi,\rho) \in \widehat{G\times {\rm Gal}}).$$
Noting \eqref{character*matrices}, one shows
\begin{align*}
\pi_w^*(\chi,\rho) (g,e)
= \chi_w(g) \times \chi (K_{d_w}(e), A_{cx_{22}-x_{22}^w}(e))  \times \rho(e).
\end{align*}
Hence, if we put $\eta = \chi \circ (K_{d_w}, A_{cx_{22}-x_{22}^w}) \in \widehat{\rm Gal}$, then we have $\pi_w^*(\chi,\rho) = (\chi_w, \eta\rho)$.
Thus, we obtain the identity of the theorem by \eqref{N of base change}.
\end{proof}

\subsection{Lauricella's $F_D$ ($k=2, n\geq 5, \D=(1,\dots,1)$)}\label{subsec. of X_D}
We can generalize the result in Gauss' case ($k=2, n=4, \D=(1,1,1,1)$) to general $n \geq 5$, which corresponds to Lauricella's $F_D^{(n-3)}$. 
Now, recall that $W_\D  =  \S_{n}$ and the variety $X_{\D,z}$ is defined by
$$ \begin {cases} t_i^N = s_1z_{1i} + s_2z_{2i} & (i=1, \dots, n) \\ t_1\cdots t_n \neq 0. \end{cases}$$
For $\l_1, \dots, \l_m \in \k^*$ ($m:=n-3$), fix an element $x:= (x_{ij}) \in M(2, m+1;\k)$ such that
$$ \l_i = \dfrac{x_{11}x_{2(i+1)}}{x_{21}x_{1(i+1)}}\quad (i=1, \dots, m).$$
Put $z=\left( \begin{array}{c|c}x& I_2 \end{array}\right)$.
Define a projective variety $X_x \subset \mathbbm{P}^{n-1}$ by the equation
$$\begin{cases}
 u_i^N = x_{1i}v_1^N + x_{2i}v_2^N & (i=1, \dots, n-2) \vspace{3pt} \\ u_1\cdots u_{n-2}v_1v_2 \neq 0. 
 \end{cases}$$
Then, we have the morphism $\p \colon X_{\D,z} \rightarrow X_x$ given by
$$(t, s) \longmapsto (u_1 : \cdots : u_{n-2}:v_1:v_2) = (t_1:\cdots : t_{n}).$$
Define a matrix
%$$ \theta = -( e_1, e_{m+3}, e_{m+4}, \dots, e_{2m+2}, e_1-e_{m+2}, \bm 0) \in M(2m+2, m+3;\Z). $$
$$\theta_0 = \left(\begin{array}{c|c|c|c}
-1 & 0 \quad \cdots \quad  0& -1 & 0\\
\hline
\mat{0 \\ \vdots \\ 0} & O_m & \mat{0 \\ \vdots \\ 0} & \mat{0 \\ \vdots \\ 0} \\
\hline 
0 & 0 \quad \cdots \quad 0 & 1 & 0 \\
\hline 
\mat{0 \\ \vdots \\ 0} & -I_m & \mat{0 \\ \vdots \\ 0} & \mat{ 0 \\ \vdots \\ 0}
\end{array}\right) \in M(2m+2, n; \Z),$$
and put
$$ d_x = (x_{21},  x_{12}, \dots, x_{1(m+1)}, x_{11}, x_{22}, \dots, x_{2(m+1)}).$$
Then, we obtain a morphism $\p_{\theta_0} \colon  X_{F_D^m,\bm\l} \otimes \k_N \rightarrow X_x \otimes \k_N$ given by
\begin{align*}
 (x_i,y_i) 
 \longmapsto & \big(  \sqrt[N]{d_x}^{-1} (x_i : y_i) \big)*\theta_0 ,
%&  = ( \dfrac{\sqrt[N]{x_{21}}}{x_0} : \dfrac{\sqrt[N]{x_{22}}}{y_1} : \cdots : \dfrac{\sqrt[N]{x_{2(m+1)}}}{y_m} : \dfrac{\sqrt[N]{x_{21}}}{\sqrt[N]{x_{11}}}\cdot \dfrac{y_0}{x_0}:1),
\end{align*}
where we wrote $(x_i,y_i) = (x_0, \dots, x_m, y_0, \dots, y_m)$.
Similarly to Remark \ref{rem of 2X2 and Xx}, we have the decomposition
$$ X_{F_D^m, \bm\l} \otimes \k_N = \bigsqcup_{\bm \tau^N = \bm\l} X_{F_D^m,\bm\l}^{\bm\tau} \quad (\bm\tau=(\tau_1,\dots,\tau_m) ),$$
where
$$ X_{F_D^m,\bm\l}^{\bm\tau} := \{ (x_i,y_i) \in X_{F_D^m,\bm\l}\otimes\k_N \mid \tau_i x_0x_i = y_0y_i \mbox{ for all }i \}.$$
By restriction, $\p_{\theta_0}$ induces the isomorphism $ \p_{\theta_0}^\tau \colon X_{F_D^m,\bm\l}^{\bm\tau} \rightarrow X_x \otimes \k_N $, where the inverse morphism is as below.
For $j=1, \dots, m$,
$$ T_j :=\left( \begin{array}{c|c} 
   \mat{O} & \mat{1 \\ e_j \\ -1 \\ -e_j} 
    \end{array}\right) \in M(2m+2,n;\Z) , \quad \tau_j' := \tau_j \cdot ( \sqrt[N]{d_x} * \bmat{ 1 \\ e_j \\ -1 \\ -e_j}),$$
and put 
$$\theta_j = \theta_0 + T_j,\quad T_0=O,\quad \tau_0'=1.$$
Define $\rho_j \in M(n;2m+2;\Z)$ ($j=0,\dots, m$) by
$$\small \rho_0 =\left( \begin{array}{c|c|c|c}
-1 & \mat{0 & \cdots & 0} & -1 & \mat{0 & \cdots & 0 }\\
\hline
\mat{0 \\ \vdots \\ 0} & -I_m & \mat{0 \\ \vdots \\ 0} & -I_m \\
\hline
\mat{0 \\ 0} & \mat{0 & \cdots & 0\\ 0& \cdots & 0} & \mat{1 \\ 0} &  \mat{0 & \cdots & 0\\ 0 & \cdots & 0}
\end{array} \right),\quad 
\rho_j = \left( \begin{array}{c|c|c|c}
0 & \mat{0 & \cdots & 0} & 0 & \mat{0 & \cdots & 0 }\\
\hline
\mat{0 \\ \vdots \\ 0} & O_m & \mat{0 \\ \vdots \\ 0} & O_m \\
\hline
\mat{0 \\ 0} & \mat{{}^t e_j \\ {}^t e_j} & \mat{0 \\ 0} &  \mat{0 & \cdots & 0\\ 0 & \cdots & 0}
\end{array} \right), $$
and put 
$$\rho= \sum_{j=0}^m \rho_j.$$
The inverse morphism is given by
$$ (\p_{\theta_0}^{\bm\tau})^{-1} (u_1:\cdots : u_{n-2}:v_1:1) = \dfrac{\sqrt[N]{d_x}}{\prod_{j=1}^m (  (1,\dots, 1, \tau_j')*\rho_j)} ( (u_1,\dots,u_{n-2},v_1,1 )*\rho ).$$
We can show this as follows.
Noting that
$
(1,\dots,1,\tau_j') \big( (\sqrt[N]{d_x}^{-1}(x_i,y_i))*T_j \big) = (1,\dots,1)
$
for $(x_i,y_i) \in X_{F_D^m,\bm\l}^{\bm\tau}$,
 we have
 \begin{equation}
(  \sqrt[N]{d_x}^{-1} (x_i,y_i) ) *A = (1, \dots, 1, \tau_j') \big( (\sqrt[N]{d_x}^{-1}(x_i,y_i) )*(A+T_j) \big) \label{eq for Tj}
 \end{equation}
 for $A \in M(2m+2, n;\Z) $.
Therefore, one shows
\begin{align*}
( \sqrt[N]{d_x}^{-1} (x_i, y_i))*\theta_0\rho
& = \prod_{j=0}^m ( \sqrt[N]{d_x}^{-1} (x_i, y_i))*\theta_0\rho_j\\
& = \prod_{j=0}^m \Big( (1,\dots, 1,\tau_j')\big( (\sqrt[N]{d_x}^{-1}(x_i, y_i))*\theta_j\big) \Big)* \rho_j \\
& = (\prod_{j=1}^m  (1,\dots, 1,\tau_j')*\rho_j) \cdot \big( (\sqrt[N]{d_x}^{-1}(x_i, y_i) )* \sum_{j=0}^m \theta_j \rho_j\big).
\end{align*}
By this and noting that $\sum_{j=0}^m \theta_j \rho_j = I_{2m+2}$, we see $(\p_{\theta_0}^{\bm \tau})^{-1} \circ \p_{\theta_0}^{\bm \tau} = {\rm id}$.
On the other hand, noting $\rho \theta_0 = I_n-\left(\begin{array}{c|c} O & e_n\end{array}\right)$ and $\rho_j\theta_0 = O_n$ for any $1\leq j \leq m$, we see $\p_{\theta_0}^{\bm \tau}  \circ (\p_{\theta_0}^{\bm \tau})^{-1} = {\rm id}$.

Suppose that $\l_i \neq 1$ and $\l_i \neq \l_j$ for any $i$ and $j \neq  i$.
Then, $[i\, j] \neq 0$ for $z = \left( \begin{array}{c|c}
x & I_2
\end{array} \right)$ when $ i \neq j $.
For $\sigma \in \S_n$, write $(z_{ij}^\sigma) = zP_\sigma$ and put
$$ z_\sigma = \bmat{z_{1(n-1)}^\sigma & z_{1n}^\sigma  \\ z_{2(n-1)}^\sigma & z_{2n}^\sigma }^{-1} zP_\sigma.$$
It can be written as
$$ z_\sigma = \left( \begin{array}{c|c}
x_\sigma  & I_2
\end{array} \right), \quad x_\sigma := (x_{ij}^\sigma) \in M(2, n-2;\k),$$
where $x^\sigma_{ij} \in \k^*$.
Put
$$ \l_{\sigma,i} = \dfrac{x_{11}^\sigma x_{2(i+1)}^\sigma}{x_{21}^\sigma x_{1(i+1)}^\sigma}, \quad \bm\l_\sigma =(\l_{\sigma,i})_i.$$
By Theorem \ref{isom fw} and Theorem \ref{isom between X_D} \ref{isom Lg}, we have the isomorphism
$$ f_\sigma \colon X_{\D,z} \longrightarrow X_{\D,z_\sigma}\, ;\,  (t,s) \longmapsto (t*P_\sigma , s') \quad (s' = \bmat{z_{1(n-1)}^\sigma & z_{1n}^\sigma  \\ z_{2(n-1)}^\sigma & z_{2n}^\sigma }s).$$
Clearly, we have the isomorphism
$$ g_\sigma \colon X_x \longrightarrow X_{x_\sigma} \, ;\, (u_1 : \cdots : v_2) \longmapsto (u_1 : \cdots : v_2)*P_\sigma,$$
and it satisfies that $\p \circ f_\sigma = g_\sigma \circ \p$.

Define
$$Q_\sigma = \sum_{j=0}^m ( \theta_0 P_\sigma M + T_j)\rho_j ,$$
where
$$ M := \left( \begin{array}{c|c}
I_{n-1} & \mat{0 \\ \vdots \\ 0} \\
\hline
\mat{-1 & \cdots & -1} & 0
\end{array}\right) \in M(n,n;\Z).$$
\begin{lem}\label{lem of Q_sigma for FD}
For any $\sigma, \sigma' \in \S_n$, we have 
$$ Q_\sigma Q_{\sigma'}=Q_{\sigma\sigma'}.$$
In particular, $Q_\sigma \in GL_{2m+2}(\Z)$.
\end{lem}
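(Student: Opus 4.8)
The plan is to imitate the proof of Lemma \ref{lem of Qsigma} (the Gauss case), the only genuinely new feature being the summation over $j$. Writing $\rho=\sum_{j=0}^m\rho_j$ and using $T_0=O$, I first record the alternative form
$$ Q_\sigma=\theta_0 P_\sigma M\rho+S,\qquad S:=\sum_{j=1}^m T_j\rho_j, $$
obtained by pulling the $j$-independent factor $\theta_0 P_\sigma M$ out of the defining sum. Distributing $Q_\sigma$ over the defining sum for $Q_{\sigma'}$ gives
$$ Q_\sigma Q_{\sigma'}=\sum_{j=0}^m\bigl(Q_\sigma\theta_0 P_{\sigma'}M+Q_\sigma T_j\bigr)\rho_j, $$
so the lemma reduces to the two identities $Q_\sigma\theta_0 P_{\sigma'}M=\theta_0 P_{\sigma\sigma'}M$ and $Q_\sigma T_j=T_j$ (the case $j=0$ being trivial since $T_0=O$). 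Granting these, the right-hand side collapses to $\sum_j(\theta_0 P_{\sigma\sigma'}M+T_j)\rho_j=Q_{\sigma\sigma'}$.

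For the first identity I would mirror the three facts used in Lemma \ref{lem of Qsigma}. From the relations $\rho\theta_0=I_n-(O|e_n)$ and $\rho_j\theta_0=O_n$ $(1\le j\le m)$ established earlier in this subsection, together with $Me_n=0$, one gets $M\rho\theta_0=M$ and $S\theta_0=O$, hence $Q_\sigma\theta_0=\theta_0 P_\sigma M\rho\theta_0+S\theta_0=\theta_0 P_\sigma M$. Combining this with $MP_{\sigma'}M=P_{\sigma'}M$ — valid because $M$ is the idempotent projection onto the hyperplane $\{\sum_i x_i=0\}$, which $P_{\sigma'}$ preserves — and with $P_\sigma P_{\sigma'}=P_{\sigma\sigma'}$ yields $Q_\sigma\theta_0 P_{\sigma'}M=\theta_0 P_\sigma M P_{\sigma'}M=\theta_0 P_{\sigma\sigma'}M$.

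The identity $Q_\sigma T_j=T_j$ is where the real work lies, and it is the step I expect to be the main obstacle, since it is the place that genuinely uses the explicit shape of $T_j$, $\rho_k$ and the columns of $T_j$ rather than the formal relations above. I would exploit that $T_j$ has a single nonzero column, $T_j=v_j e_n^{t}$ with $v_j=e_1+e_{1+j}-e_{m+2}-e_{m+2+j}\in\Z^{2m+2}$; then $Q_\sigma T_j=(Q_\sigma v_j)e_n^{t}$, so it suffices to prove the eigenrelation $Q_\sigma v_j=v_j$. The two computations to carry out, directly from the explicit matrices, are $\rho v_j=e_n$ (remarkably independent of both $j$ and $\sigma$) and $\rho_k v_j=\delta_{kj}(e_{m+2}+e_{m+3})$ in $\Z^n$. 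The first makes the leading term vanish, as $M\rho v_j=Me_n=0$; the second gives $Sv_j=T_j(e_{m+2}+e_{m+3})=v_j$, because $n=m+3$ forces the $n$-th coordinate of $e_{m+2}+e_{m+3}$ to be $1$ and $T_j$ reads off exactly that coordinate. Hence $Q_\sigma v_j=0+v_j=v_j$.

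Finally, for the ``in particular'' clause I would observe that $\theta_0 M=\theta_0$ (each column of $M-I_n$ is a multiple of $e_n$, and $\theta_0 e_n=0$), so that
$$ Q_{{\rm id}}=\sum_{j=0}^m(\theta_0 M+T_j)\rho_j=\sum_{j=0}^m(\theta_0+T_j)\rho_j=\sum_{j=0}^m\theta_j\rho_j=I_{2m+2}, $$
using $\sum_{j=0}^m\theta_j\rho_j=I_{2m+2}$. Taking $\sigma'=\sigma^{-1}$ in the main identity then gives $Q_\sigma Q_{\sigma^{-1}}=Q_{{\rm id}}=I_{2m+2}$, so $Q_\sigma\in GL_{2m+2}(\Z)$ with inverse $Q_{\sigma^{-1}}$.
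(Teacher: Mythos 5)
Your proof is correct and follows essentially the same route as the paper's: the paper likewise reduces the lemma to the two identities $Q_\sigma \theta_0 P_{\sigma'}M = \theta_0 P_{\sigma\sigma'}M$ (via $M\rho_0\theta_0 = M$, $T_j\rho_j\theta_0 = O$ and $MP_\sigma M = P_\sigma M$) and $Q_\sigma T_j = T_j$ (via $\rho_k T_j = O$ for $k \notin \{0,j\}$ and $(\rho_0+\rho_j)T_j = \left(\begin{array}{c|c} O & e_n \end{array}\right)$), and your single-column computation with $v_j$ is the same calculation in vector form. One harmless slip: $\rho_k v_j = \delta_{kj}(e_{m+2}+e_{m+3})$ holds only for $k \geq 1$ (in fact $\rho_0 v_j = -e_{m+2}$, which is exactly what makes your other claim $\rho v_j = e_n$ come out right), but since you invoke it only inside $S = \sum_{k\geq 1} T_k\rho_k$ and compute $\rho v_j = e_n$ separately, the argument stands.
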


\begin{proof}
One shows
$M \rho_0 \theta_0 = M$, $MP_\sigma M=P_\sigma M$ and $T_j\rho_j\theta_0=O$, and hence, 
$$ Q_\sigma \sum_j \theta_0 P_{\sigma'}M \rho_j = \sum_j \theta_0 P_{\sigma\sigma'}M\rho_j.$$
We have $Q_\sigma T_i= T_i$ for any $i = 0, \dots, m$, where note that
$ \rho_j T_i = O$ for $j \not\in\{ 0,i\}$ and that $(\rho_0 + \rho_i)T_i = \left(\begin{array}{c|c} O & e_n \end{array} \right).$
By them, we have 
\begin{align*}
Q_\sigma Q_{\sigma'} 
& = Q_\sigma \sum_j \theta_0 P_{\sigma'}M \rho_j + Q_\sigma  \sum_j T_j \rho_j 
 = \sum_j \theta_0 P_{\sigma\sigma'}M\rho_j + \sum_j T_j\rho_j =  Q_{\sigma\sigma'}.
\end{align*} 
In particular, $Q_{\sigma^{-1}} = Q_\sigma^{-1}$.
\end{proof}

For $\bm\tau = (\tau_j)_j \in \{ \bm\tau \mid \bm\tau^N = \bm\l \}$ and $\sigma \in \S_n$, put $ \bm\tau_\sigma = (\tau_{\sigma, j})_j \in \{ \bm\tau \mid \bm\tau^N = \bm\l_\sigma\}$, where
$$ \tau_{\sigma,j} := \tau_j \cdot \dfrac{ \sqrt[N]{d_x}* {}^t(1, {}^t e_j , -1, -{}^t e_j) }{\sqrt[N]{d_{x_\sigma}}* {}^t(1, {}^t e_j , -1, -{}^t e_j)}.$$
Then, we have the isomorphism
$$h_\sigma^{\bm\tau} :=  (\p_{\theta_0}^{\bm\tau_\sigma})^{-1} \circ g_\sigma \circ \p_{\theta_0}^{\bm\tau} \colon X_{F_D^m,\bm\l}^{\bm\tau} \longrightarrow X_{F_D^m,\bm\l_\sigma}^{\bm\tau_\sigma}.$$
Let $G = (\k^*)^{2m+2}$ and ${\rm Gal} = {\rm Gal}(\k_N/\k)$.
\begin{thm}\label{isom between X_FD}
Let $\sigma \in \S_n$ and $\bm\tau \in \{ \bm\tau \mid \bm\tau^N = \bm\l\}$.
\begin{enumerate}
\item The isomorphism $h_\sigma^{\bm\tau}$ is given by
$$ (x_i,y_i) \longmapsto \dfrac{\sqrt[N]{d_{x_\sigma}}}{\sqrt[N]{d_x} * Q_\sigma} \big( (x_i, y_i ) * Q_\sigma \big).$$
By the same correspondence, we have the isomorphism
$$h_\sigma \colon X_{F_D^m,\bm\l} \otimes \k_N \longrightarrow X_{F_D^m,\bm\l_\sigma} \otimes \k_N.$$

\item For any $\sigma' \in \S_n$, we have $h_{\sigma'}^{\bm\tau_{\sigma}} \circ h_{\sigma}^{\bm\tau}=h_{\sigma \sigma'}^{\bm\tau}$.

\item Put $d_\sigma = d_{x_\sigma}/ (d_x * Q_\sigma) \in G$.
Define the automorphism $\pi_\sigma$ of $G\times {\rm Gal}$ by
$$ (g,e) \mapsto (K_{d_\sigma}(e) (g*Q_\sigma) , e).$$
Then, we have
$$ h_\sigma \circ (g,e) = \pi_\sigma(g,e) \circ h_\sigma.$$

\item For $\chi \in \widehat{ G}$, 
$$ \chi(d_\sigma) N(X_{F_D^m,\bm\l}; \chi * {}^tQ_\sigma ) = N(X_{F_D^m,\bm\l_\sigma}; \chi). $$
\end{enumerate}
\end{thm}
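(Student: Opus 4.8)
The plan is to transcribe the proof of Theorem~\ref{isom between X22 and X22-sigma} to the present setting, since the statement is its direct generalization from $\D=(1,1,1,1)$ to $\D=(1,\dots,1)$ with $n\geq 5$. All structural inputs are already in place: the isomorphisms $f_\sigma$ and $g_\sigma$ with $\p\circ f_\sigma=g_\sigma\circ\p$, the fibrewise isomorphisms $\p_{\theta_0}^{\bm\tau}$ together with the explicit inverse recorded just before the statement, and the cocycle law $Q_\sigma Q_{\sigma'}=Q_{\sigma\sigma'}$ of Lemma~\ref{lem of Q_sigma for FD}. The only genuinely new feature relative to the Gauss case is that the scaling data now involves the $m$ parameters $\tau_1,\dots,\tau_m$ rather than a single $\tau$.

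For part~(i) I would compute $h_\sigma^{\bm\tau}=(\p_{\theta_0}^{\bm\tau_\sigma})^{-1}\circ g_\sigma\circ\p_{\theta_0}^{\bm\tau}$ directly. Since $g_\sigma$ acts by $*P_\sigma$ and the matrix $M$ fixes homogeneous coordinates (so $*M$ may be inserted freely), one first obtains $g_\sigma\circ\p_{\theta_0}^{\bm\tau}(x_i,y_i)=(\sqrt[N]{d_x}^{-1}(x_i:y_i))*\theta_0 P_\sigma M$, exactly as in the Gauss case. Then I would insert the factorisation $Q_\sigma=\sum_{j=0}^m(\theta_0 P_\sigma M+T_j)\rho_j$ and apply \eqref{eq for Tj} once for each $j$; combined with the inverse-morphism formula and the identity $\sum_{j=0}^m\theta_j\rho_j=I_{2m+2}$, this rewrites the composite as $\sqrt[N]{d_{x_\sigma}}\big((\sqrt[N]{d_x}^{-1}(x_i,y_i))*Q_\sigma\big)$ multiplied by the ratio of the scaling factor $\prod_{j=1}^m\big((1,\dots,1,\tau_j')*\rho_j\big)$ coming from $\p_{\theta_0}^{\bm\tau}$ and the analogous factor $\prod_{j=1}^m\big((1,\dots,1,\tau_{\sigma,j}')*\rho_j\big)$ coming from $\p_{\theta_0}^{\bm\tau_\sigma}$. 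These cancel because $\bm\tau_\sigma$ was defined precisely so that $\tau_j'=\tau_{\sigma,j}'$ for every $j$, a one-line check from the definitions of $\tau_j'$ and $\tau_{\sigma,j}$; rewriting $\sqrt[N]{d_x}^{-1}*Q_\sigma=(\sqrt[N]{d_x}*Q_\sigma)^{-1}$ then yields the displayed quotient. Ranging over the decomposition $X_{F_D^m,\bm\l}\otimes\k_N=\bigsqcup_{\bm\tau^N=\bm\l}X_{F_D^m,\bm\l}^{\bm\tau}$ produces $h_\sigma$. This bookkeeping of the several $\tau_j$ is the step I expect to require the most care; the rest is formal.

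Parts~(ii) and~(iii) are transcriptions of the Gauss argument. For~(ii) I would substitute the explicit formula of~(i) twice and invoke $(x_\sigma)_{\sigma'}=x_{\sigma\sigma'}$ together with Lemma~\ref{lem of Q_sigma for FD}. For~(iii) I would verify $h_\sigma\circ(g,e)=\pi_\sigma(g,e)\circ h_\sigma$ by computing on structure sheaves: the twist $(g,e)\mapsto(K_{d_\sigma}(e)(g*Q_\sigma),e)$ records how the scalar $\sqrt[N]{d_{x_\sigma}}/(\sqrt[N]{d_x}*Q_\sigma)$ appearing in~(i), whose $N$th power is $d_\sigma$, varies under ${\rm Gal}$, while $*Q_\sigma$ reflects the monomial substitution on coordinates.

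Finally, for part~(iv) I would apply Lemma~\ref{Comparison of N by isom} to the isomorphism $h_\sigma$ and the automorphism $\pi_\sigma$ of $G\times{\rm Gal}$ from~(iii), obtaining $N(X_{F_D^m,\bm\l}\otimes\k_N;\pi_\sigma^*(\chi,\rho))=N(X_{F_D^m,\bm\l_\sigma}\otimes\k_N;(\chi,\rho))$ for $(\chi,\rho)\in\widehat{G\times{\rm Gal}}$. By \eqref{character*matrices} one computes $\pi_\sigma^*(\chi,\rho)=(\chi*{}^tQ_\sigma,\eta\rho)$ with $\eta=\chi\circ K_{d_\sigma}$; substituting into \eqref{N of base change} and using $K_{d_\sigma}(F)=d_\sigma$ collapses the base change and gives $\chi(d_\sigma)N(X_{F_D^m,\bm\l};\chi*{}^tQ_\sigma)=N(X_{F_D^m,\bm\l_\sigma};\chi)$, as required.
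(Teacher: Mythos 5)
Your proposal is correct and follows essentially the same route as the paper: part (i) is proved by the identical computation (composing $g_\sigma\circ\p_{\theta_0}^{\bm\tau}$, inserting $M$, applying \eqref{eq for Tj} for each $j$ against the factorisation $Q_\sigma=\sum_j(\theta_0P_\sigma M+T_j)\rho_j$, and cancelling via $\tau_j'=\tau_{\sigma,j}'$), while the paper disposes of (ii)--(iv) exactly as you do, by transcribing the argument of Theorem \ref{isom between X22 and X22-sigma} (ii)--(iv) together with Lemma \ref{lem of Q_sigma for FD}. No gaps; your spelled-out versions of (ii)--(iv) are precisely what the paper's citation of the Gauss case abbreviates.
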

\begin{proof}
(i) Note that
\begin{align*}
g_\sigma \circ \p_{\theta_0}^{\bm\tau}(x_i,y_i)
& = ( \sqrt[N]{d_x}^{-1} (x_i : y_i) ) * \theta_0 P_\sigma M.
%& = ( \sqrt[N]{d_x}^{-1} (x_i : y_i) ) * Q_\sigma \theta_0.
%& = (1,\dots,1,\bm\tau') \big( (\sqrt[N]{d_x}^{-1} (x_i: y_i) ) * (Q_\sigma \theta_0 + T_j) \big)\quad (j=1,\dots,m).
\end{align*} 
Thus, (put $\tau_{\sigma , j}' = \tau_{\sigma ,j} \cdot (\sqrt[N]{d_{x_\sigma}}*{}^t(1, {}^t e_j , -1 , -{}^t e_j)$)
\begin{align*}
 h_\sigma^{\bm\tau} (x_i,y_i) 
& = \dfrac{\sqrt[N]{d_{x_\sigma}}}{\prod_j (1,\dots,1,\tau_{\sigma,j}')*\rho_j}  \big( (\sqrt[N]{d_x}^{-1}(x_i,y_i))*\theta_0 P_\sigma M \rho \big).
\end{align*}
Noting \eqref{eq for Tj}, we have
\begin{align*}
(\sqrt[N]{d_x}^{-1}(x_i,y_i))*\theta_0 P_\sigma M \rho
& = \prod_{j=0}^m ( \sqrt[N]{d_x}^{-1}(x_i,y_i) )*\theta_0 P_\sigma M \rho_j\\
& = \prod_{j=0}^m \Big( (1,\dots,1,\tau_j')  \big( ( \sqrt[N]{d_x}^{-1}(x_i,y_i) )*(\theta_0 P_\sigma M + T_j) \big) \Big)*\rho_j \\
& = ( \prod_{j=1}^m (1,\dots,1,\tau_j')*\rho_j )\cdot \big( (\sqrt[N]{d_x}^{-1}(x_i,y_i))*Q_\sigma.
\end{align*}
Noting $\tau_j' = \tau_{\sigma,j}'$, we obtain (i).
(ii)--(iv) can be proved by the same argument of the proof of Theorem \ref{isom between X22 and X22-sigma} (ii)--(iv) and using Lemma \ref{lem of Q_sigma for FD}.
\end{proof}

By (iv) above and Theorem \ref{N of X_Lauricella} \ref{N of X_Lauricella-D}, we can obtain $n!$ transformation formulas for $F_D^{(m)}$.
%\begin{exa}[$n=5, m=2, \sigma = (1\, 2\, 3\, 4\, 5)$]
%We have
%$$ Q_\sigma = \bmat{ -1 & 1 & 2 & 0 & -1 & 0 \\ 0 & 1 & 0 & 0 & 0 & 0  \\ 0 & 0 & 1 & 0 & 0 & 0 \\ 0 & -1 & -2 & 0 & 0 & -1 \\ 1 & -1 & 0 & 1 & 0 & 0 \\ 0 & 1 & -1 & 0 & 1 & 0 }.$$
%If we take $x = \bmat{1 & 1 & 1 \\ -1 & -\l_1 & -\l_2}$, then $x_\sigma = \bmat{ 1 - \l_1 & 1 - \l_2 & 1 \\ 1 & 1 & 1}$,  and hence, we have 
%$$  \bm \l_\sigma = ( \dfrac{1-\l_1}{1-\l_2} , 1-\l_1), \quad d_\sigma = (  \dfrac{1}{\l_1}, \dfrac{\l_1(\l_2-1)}{\l_2}, -\l_2, \dfrac{\l_1-1}{\l_1}, \dfrac{1}{\l_2}, 1).$$
%By Theorem \ref{N of X_Lauricella} \ref{N of X_Lauricella-D} and Theorem \ref{isom between X_D} (iii), we have
% 
%\end{exa}

\begin{rem}
Let us consider the reducible case $\l_{m-1} =  \l_m $.
Then, $F_D^{(m)} (\bm\l)$ reduces to $F_D^{(m-1)}(\bm\l')$ (cf. \cite[Theorem 3.23 (i)]{N2}), where $\bm\l' = (\l_1,\dots,\l_{m-1})$.
Analogously, $X_{F_D^m,\bm\l}$ can be reduced to $X_{F_D^{m-1},\bm\l'}$ as follows.
For $a,b \in \k^*$, put
$$ X_{F_D^m,\bm\l}^{a,b} := \left\{ (x_i, y_i) \in X_{F_D^m,\bm\l} \middle| \, (x_m, y_m) = (ax_{m-1}, by_{m-1}) \right\}.$$
Now,  for $(x_i,y_i) \in X_{F_D^m,\bm\l}$, noting that $y_{m-1}^N /x_{m-1}^N  = y_m^N / x_m^N$ and $x_i^N + y_i^N = 1$, we have $x_{m-1}^N = x_m^N$ and $y_{m-1}^N = y_m^N$.
Hence, we have the decomposition
$$ X_{F_D^m,\bm\l} = \bigsqcup_{a,b \in \k^*} X_{F_D^m,\bm\l}^{a,b}.$$
Clearly, we have the isomorphism $ i^{a,b} \colon X_{F_D^{m-1},\bm\l'}  \rightarrow X_{F_D^{m},\bm\l}^{a,b} $ by
$$ (x_i, y_i) \longmapsto (x_0, \dots, x_{m-1}, ax_{m-1}, y_0, \dots, y_{m-1}, by_{m-1}).$$
The reduction formula \cite[Theorem 3.23 (i)]{N2} can be restored as follows.
The subgroup $G' = \{ (a_0, \dots, a_{m-1}, a_{m-1}, b_0, \dots, b_{m-1}, b_{m-1})\} \subset (\k^*)^{2m+2}$ acts on $X_{F_D^m,\bm\l}^{a,b}$.
Let $\pi \colon (\k^*)^{2m}  \rightarrow G'$ be the natural isomorphism.
By Lemma \ref{Comparison of N by isom}, for $\chi \in \widehat{(\k^*)^{2m+2}}$, we have
\begin{align*}
N(X_{F_D^m,\bm\l}; \chi) 
&= \dfrac{1}{(q-1)^2} \sum_{a,b \in \k^*} N(X_{F_D^m,\bm\l}^{a,b} ; \chi |_{G'})\\
& =  N(X_{F_D^{m-1},\bm\l'}; \pi^*(\chi|_{G'})).
\end{align*}
If we write $\chi = ( (\a_i)_i, (\b_i)_i)$, then 
$$\pi^*(\chi|_{G'}) = (\a_0, \dots, \a_{m-2}, \a_{m-1}\a_m, \b_0,\dots,\b_{m-2},\b_{m-1}\b_m).$$
By Theorem \ref{N of X_Lauricella} \ref{N of X_Lauricella-D}, if we take $\b_1=\cdots=\b_m=\e$, we can restore the reduction formula.
\end{rem}

\subsection{Humbert's $\Phi_1$ ($k=2, n= 5, \D=(1,1,1,2)$)}\label{subsec. of X_Humbert}
For $z = (z_1, \dots, z_5) \in M(2,5;\k)$, recall that the variety $X_{\D,z}$ defined by
$$ \begin{cases} t_i^N = sz_i & (i=1, \dots, 4) \\ t_4^N(u^q-u) = sz_5 \\ t_1\cdots t_4 \neq 0. \end{cases}$$
For $\bm\l=(\l_1, \l_2) \in (\k^*)^2$, fix an element $x=(x_{ij}) \in M(2,3;\k)$ such that
$$ (\l_1, \l_2) = \big( \frac{x_{21}x_{12}}{x_{11}x_{22}}, \frac{x_{21}x_{13}}{x_{11}} \big).$$
Put
$$ z = \bmat{ x_{11} & x_{12} & 1 & 0 & x_{13} \\ x_{21} & x_{22} & 0 & 1 & x_{23}}. $$
Note that $[i\, 4]\neq 0$ ($i = 1,2,3$) for $z$.
Define a variety $X_x \subset \P^3 \times \mathbbm{A}^1 =\{ (u_1:\cdots:u_4, v) \}$ by
$$ \begin{cases} u_1^N = x_{11}u_3^N + x_{21}u_4^N\\ u_2^N = x_{12}u_3^N + x_{22}u_4^N \\ u_4^N(v^q-v) = x_{13}u_3^N + x_{23}u_4^N \\ u_1u_2u_3u_4 \neq 0. \end{cases}$$
There is the morphism $\p \colon X_{\D,z} \rightarrow X_x; ((t_i),u,s) \mapsto  (t_1:t_2:t_3:t_4, u)$.
Put $d_x = (x_{11}, x_{22}, x_{21},x_{12}, x_{13})$ and
$$ \theta_0 = \bmat{0&0&1&0\\ 0&-1&0&0 \\ -1&0&-1&0\\ 0&0&0&0 \\ 0&0&0&0}.$$
We have the morphism $ \p_{\theta_0} \colon X_{\Phi_1,\bm\l} \otimes \k_{pN} \rightarrow X_x \otimes \k_{pN}$ by
$$ (x_i, y_i, z, t) \longmapsto \big( (\sqrt[N]{d_x}^{-1}(x_i:y_i:z))*\theta_0, t + r(x_{23}) \big).$$
Similarly to Remark \ref{rem of 2X2 and Xx}, $X_{\Phi_1,\bm\l} \otimes \k_{pN}$ decomposes into the $\bm\tau$-component 
$$X_{\Phi_1, \bm\l}^{\bm \tau} := \{ (x_i,y_i,z,t) \in X_{\Phi_1,\bm\l}\otimes \k_{pN} \mid \tau_1 x_1x_2 = y_1y_2, \tau_2 x_1 = y_1z\}\quad (\bm\tau^N=\bm\l).$$
The restriction of $\p_{\theta_0}$ induces the isomorphism $\p_{\theta_0}^{\bm\tau} \colon  X_{\Phi_1,\bm\l}^{\bm\tau} \rightarrow X_x\otimes \k_{pN}$.
Put $\tau_j' = \tau_j( \sqrt[N]{d_x} * (T_j e_4))$ and $\theta_j = \theta_0 + T_j$, where
$$ T_1 := \left(\begin{array}{c|c}
\mat{O} & \mat{1\\1\\-1\\-1\\0}
\end{array}\right), \quad
T_2 := \left(\begin{array}{c|c}
\mat{O} & \mat{1\\0\\-1\\0\\-1}
\end{array}\right) \quad \in M(5,4;\Z).$$
Put $\rho = \sum_{j=0}^2 \rho_j \in M(4,5;\Z)$, where
$$ \rho_0= \left( \begin{array}{c|c}
\mat{-1&0&-1\\ 0 & -1 & 0 \\ 1 & 0 & 0 \\ 0 & 0 & 0} & O
\end{array}\right),\quad 
\rho_1 =  \left( \begin{array}{c|c}
\mat{O} & \mat{0 & 0 \\ -1 & 0 \\ 1 & 0 \\ -1 & 0 }
\end{array}\right),\quad
\rho_2 =  \left( \begin{array}{c|c}
\mat{O} & \mat{0 & 0 \\ 0 & 0 \\ 0 & 1 \\ 0 & -1 }
\end{array}\right).$$
Noting $\sum_{j=0}^2 \theta_j \rho_j = I_5$, $\rho \theta_0 = I_4-\left(\begin{array}{c|c} O&e_4\end{array}\right)$ and $\rho_j\theta_0 = O_4$ for each $j=1,2$, one shows that the inverse morphism of $\p_{\theta_0}^{\bm\tau}$ is given by $(\p_{\theta_0}^{\bm \tau})^{-1}(u_1:u_2:u_3:1, v)=(x_i,y_i,z,t)$, where 
$$ (x_i,y_i,z) = \dfrac{\sqrt[N]{d_x}}{\prod_{j=1}^2 (1,1,1,\tau_j')*\rho_j} \big( (u_1,u_2,u_3,1)*\rho \big), \quad t = v-r(x_{23}).$$

Now, $W_\D = \S_3 \times W(2)$.
For $w = {\rm diag}(P_\sigma, \mu(c)) \in W_\D$, write $(z_{ij}^w) = zw$. 
Let $x_w = (x_{ij}^w) \in M(2,3;\k)$ be the matrix given by
$$ z_w := \bmat{z_{13}^w & z_{14}^w \\ z_{23}^w & z_{24}^w}^{-1}zw = \bmat{x_{11}^w & x_{12}^w & 1 & 0 & x_{13}^w \\ x_{21}^w & x_{22}^w & 0 & 1 & x_{23}^w}.$$
Suppose that $\l_1 \neq 1$, then $x_{ij}^w \neq 0$ for other than $x_{23}^w$.
Put
$$ \bm\l_w = \big( \frac{x_{21}^w x_{12}^w}{x_{11}^w x_{22}^w}, \frac{x_{21}^w x_{13}^w }{x_{11}^w} \big).$$
By  Theorems \ref{isom fw} and \ref{isom between X_D} \ref{isom Lg}, we have the isomorphism
$$f_w \colon X_{\D,z} \rightarrow X_{\D,z_w}\, ; \, (t,s) \mapsto \big( (t_1,t_2,t_3)*P_\sigma, t_4, cu, s') \big), \ (s' = \bmat{z_{13}^w & z_{14}^w \\ z_{23}^w & z_{24}^w}s), $$
and clearly we also have the isomorphism
$$ g_w \colon X_x \longrightarrow X_{x_w}\, ;\, (u_1:u_2:u_3:u_4,v) \longmapsto ((u_1:u_2:u_3:u_4)*P_\sigma', cv),$$
where
$$P_\sigma' = \left( \begin{array}{c|c}
P_\sigma & \mat{0\\0\\0} \\
\hline 
\mat{0 & 0 & 0} & 1
\end{array} \right) .$$
Define 
$$ Q_\sigma = \sum_{j=0}^2 (\theta_0 P_\sigma' M + T_j) \rho_j,$$
where $M$ is as in Subsection \ref{subsec. of X22}.
We can prove the following lemma by the same argument of the proof of Lemma \ref{lem of Q_sigma for FD}.
\begin{lem}\label{lem for Qsigma of Phi_1}
For any $\sigma, \sigma' \in \S_3$, we have
$$ Q_\sigma Q_{\sigma'} = Q_{\sigma\sigma'}.$$
In particular, $Q_\sigma \in GL_5(\Z)$.
%\begin{enumerate}
%\item  $Q_\sigma \in GL_5(\Z)$.
%
%\vspace{3pt}
%\item \label{eq of Q-sigma for Humbert-1}
%$\theta P_\sigma = Q_\sigma \theta$.
%
%\vspace{3pt}
%\item \label{eq of Q-sigma for Humbert-2}
%$
%\left\{ \begin{array}{c}
%Q_\sigma {}^t(1, 1, -1,-1, 0) = {}^t(1, 1, -1,-1, 0) ,\vspace{7pt}\\
%Q_\sigma{}^t(1,0,-1,0,-1)  = {}^t(1,0,-1,0,-1).
%\end{array} \right.
%$
%\end{enumerate}
\end{lem}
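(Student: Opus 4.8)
The plan is to follow verbatim the argument used for Lemma~\ref{lem of Q_sigma for FD}, now with $\sigma \in \S_3$ and with $P_\sigma'$ (the $4\times 4$ permutation matrix fixing the last coordinate) in place of $P_\sigma$, and with the matrices $\theta_0, T_1, T_2, \rho_0, \rho_1, \rho_2$ of the present subsection in place of the $F_D$-data. First I would record the four component identities that drive the computation: (a) $M\rho_0\theta_0 = M$, where $M$ is the matrix of Subsection~\ref{subsec. of X22}; (b) $M P_\sigma' M = P_\sigma' M$ for every $\sigma \in \S_3$; (c) $T_j\rho_j\theta_0 = O$ for $j = 1,2$; and (d) $\rho_j T_i = O$ whenever $j \notin \{0,i\}$, together with $(\rho_0 + \rho_i)T_i = \left(\begin{array}{c|c} O & e_4 \end{array}\right)$ for $i = 1,2$. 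Each is a finite identity among explicit integer matrices, checked by direct multiplication; they are the exact analogues of the identities used for $F_D$, and they rest on the relations $\sum_{j=0}^2 \theta_j\rho_j = I_5$, $\rho\theta_0 = I_4 - \left(\begin{array}{c|c} O & e_4 \end{array}\right)$ and $\rho_j\theta_0 = O_4$ $(j=1,2)$ already recorded before the lemma.

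From these I would extract the two structural consequences, exactly as for $F_D$. Using (a), (c) and $\rho_j\theta_0 = O_4$ $(j\geq 1)$ one finds $Q_\sigma\theta_0 = \theta_0 P_\sigma' M$; combining this with the multiplicativity $P_\sigma' P_{\sigma'}' = P_{\sigma\sigma'}'$ and (b) gives $Q_\sigma\, \theta_0 P_{\sigma'}' M\rho_j = \theta_0 P_{\sigma\sigma'}' M\rho_j$ for each $j$. Using (d) together with the vanishing $M\left(\begin{array}{c|c} O & e_4 \end{array}\right) = O$ (the last column of $M$ is zero), the cross terms cancel and one obtains $Q_\sigma T_i = T_i$ for $i = 0,1,2$. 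Splitting $Q_{\sigma'} = \sum_{j=0}^2 \theta_0 P_{\sigma'}' M\rho_j + \sum_{j=0}^2 T_j\rho_j$ and applying the two facts termwise then yields
\[
Q_\sigma Q_{\sigma'} = \sum_{j=0}^2 \theta_0 P_{\sigma\sigma'}' M\rho_j + \sum_{j=0}^2 T_j\rho_j = Q_{\sigma\sigma'}.
\]

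For invertibility I would settle the base case $Q_{\rm id} = I_5$ directly: from $\sum_{j=0}^2\theta_j\rho_j = I_5$ one gets $Q_{\rm id} = \sum_j(\theta_0 M + T_j)\rho_j = I_5 + \theta_0(M - I_4)\rho$, and since $M - I_4$ has only its last row nonzero while the last column of $\theta_0$ vanishes, the correction term $\theta_0(M-I_4)\rho$ is zero. Then $Q_\sigma Q_{\sigma^{-1}} = Q_{\rm id} = I_5$ shows $Q_\sigma \in GL_5(\Z)$ with $Q_\sigma^{-1} = Q_{\sigma^{-1}}$. I expect the only genuine work to be the bookkeeping in (a)--(d): the present variety carries an extra Artin--Schreier coordinate, so $\theta_0$ and the $T_j$ are $5\times 4$ while the $\rho_j$ are $4\times 5$, rather than the square blocks of the $F_D$ case. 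One must therefore keep straight the roles of $I_4$ and $I_5$ and of the truncation matrix $\left(\begin{array}{c|c} O & e_4 \end{array}\right)$ across this rectangular block structure; once the four identities are confirmed, the algebra is identical to the proof of Lemma~\ref{lem of Q_sigma for FD}.
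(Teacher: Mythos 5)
Correct, and this is precisely the paper's own route: the paper proves this lemma simply by invoking the same argument as for Lemma~\ref{lem of Q_sigma for FD}, and your identities (a)--(d), the derived facts $Q_\sigma\theta_0=\theta_0 P_\sigma' M$ and $Q_\sigma T_i=T_i$, and the base case $Q_{\rm id}=I_5$ all check out against the explicit $5\times4$ matrices $\theta_0, T_j$ and $4\times5$ matrices $\rho_j$ of this subsection (including the rectangular bookkeeping you flag). The one step you leave tacit, $\sum_j T_j\rho_j T_i = T_i\rho_i T_i = T_i$, follows immediately from your (d) since the last row of $\rho_0$ vanishes, so it falls within your announced ``direct multiplication'' checks.
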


We have the isomorphism $h_w^{\bm\tau} := (\p_{\theta_0}^{\bm\tau_w})^{-1} \circ g_w \circ \p_{\theta_0}^{\bm\tau} \colon X_{\Phi_1,\bm\l}^{\bm\tau} \rightarrow X_{\Phi_1,\bm\l}^{\bm\tau_w} $, where
$$ \bm\tau_w := (\tau_{w,1}, \tau_{w,2}), \quad  \tau_{w,j} := \tau_j \cdot \dfrac{\sqrt[N]{d_x}*(T_je_4)}{\sqrt[N]{d_{x_w}}*(T_je_4)}.$$
Put $G=(\k^*)^5 \times \k$ and ${\rm Gal} = {\rm Gal}(\k_{pN}/\k)$.
\begin{thm}
Let $w = {\rm diag}(P_\sigma, \mu(c)) \in W_\D$.
\begin{enumerate}
\item The isomorphism $h_w^{\bm\tau}$ is given by 
$$ (x_i,y_i,z,t) \longmapsto \big( \dfrac{\sqrt[N]{d_{x_w}}}{\sqrt[N]{d_x} * Q_\sigma}( (x_i,y_i,z)*Q_\sigma), ct+r(cx_{23}-x_{23}^w)  \big).$$
By the same correspondence, we have the isomorphism
$$ h_w \colon X_{\Phi_1,\bm\l} \otimes \k_{pN} \longrightarrow X_{\Phi_1,\bm\l_w} \otimes \k_{pN}.$$

\item We have $h_{w'}^{\bm\tau_w} \circ h_{w}^{\bm\tau} = h_{ww'}^{\bm\tau}$ for any $w' \in W_\D$.

\item Put $d_w := d_{x_w}/(d_x*Q_\sigma) \in (\k^*)^5$.
Define the automorphism $\pi_w$ of $G \times {\rm Gal}$ by
$$(\bm \xi, a ,e) \mapsto (K_{d_w}(e) (\bm\xi *Q_\sigma), ca+A_{cx_{23}-x_{23}^w}(e), e) \quad (\bm\xi \in (\k^*)^5, a \in \k).$$
Then, for $(g,e) \in G \times {\rm Gal}$, 
$$h_w \circ (g,e) = \pi_w(g,e) \circ h_w.$$

\item For $\chi = ( (\a_i)_i, \psi) \in \widehat{G}$, we have
$$ \chi(d_w, cx_{23}-x_{23}^w) N(X_{\Phi_1,\bm\l}; \chi_w) = N(X_{\Phi_1,\bm\l_w};\chi), $$
where $\chi_w := ( (\a_i)*{}^t Q_\sigma, \psi_c)$.
\end{enumerate}
\end{thm}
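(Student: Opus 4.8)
The plan is to follow verbatim the strategy of Theorems~\ref{isom between X22 and X22-sigma}, \ref{N of X12} and especially \ref{isom between X_FD}: the projective Fermat coordinates are handled exactly as in the Lauricella $F_D$ case, while the single Artin--Schreier coordinate $t$ is handled exactly as in the Kummer case (Theorem~\ref{N of X12}). The only genuinely new feature is that the multiplicative part (governed by $\theta_0$, $P_\sigma'$, $M$, $T_j$, $\rho_j$ and the roots $\sqrt[N]{\ }$) and the additive part (governed by $r(\cdot)$) must be carried along simultaneously.

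For (i), I would first compute
$$ g_w \circ \p_{\theta_0}^{\bm\tau}(x_i,y_i,z,t) = \big( (\sqrt[N]{d_x}^{-1}(x_i:y_i:z))*\theta_0 P_\sigma' M,\ c(t+r(x_{23})) \big),$$
using that $g_w$ permutes and rescales the homogeneous coordinates by $P_\sigma'$ (the projective coordinates are unchanged by $M$, just as in the earlier proofs) and multiplies $v$ by $c$. Applying $(\p_{\theta_0}^{\bm\tau_w})^{-1}$ and invoking the relation $(\sqrt[N]{d_x}^{-1}(x_i,y_i,z))*A = (1,1,1,\tau_j')\big((\sqrt[N]{d_x}^{-1}(x_i,y_i,z))*(A+T_j)\big)$ on $X_{\Phi_1,\bm\l}^{\bm\tau}$ (the analogue of \eqref{eq for Tj}) together with $\sum_j (\theta_0 P_\sigma' M + T_j)\rho_j = Q_\sigma$, the projective part collapses to $\frac{\sqrt[N]{d_{x_w}}}{\sqrt[N]{d_x}*Q_\sigma}((x_i,y_i,z)*Q_\sigma)$ after the factors $\tau_j'$ cancel, since $\tau_j' = \tau_{w,j}'$ by the definition of $\bm\tau_w$. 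For the Artin--Schreier coordinate, the $\k$-linearity of $r$ (i.e. $r(ct)=c\,r(t)$) gives the chain $t \mapsto t+r(x_{23}) \mapsto ct + r(cx_{23}) \mapsto ct + r(cx_{23}) - r(x_{23}^w) = ct + r(cx_{23}-x_{23}^w)$, which is exactly the claimed formula. That $h_w^{\bm\tau}$ is an isomorphism is inherited from $\p_{\theta_0}^{\bm\tau}$ and $g_w$, and gluing over $\bm\tau^N=\bm\l$ produces $h_w$.

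For (ii), I would combine $(x_w)_{w'} = x_{ww'}$ with Lemma~\ref{lem for Qsigma of Phi_1} for the multiplicative part exactly as in Theorem~\ref{isom between X_FD}(ii); for the additive part the constants add correctly, $c'\big(cx_{23}-x_{23}^w\big)+\big(c'x_{23}^w-x_{23}^{ww'}\big) = c'cx_{23}-x_{23}^{ww'}$, again by $\k$-linearity of $r$. Part (iii) is a direct computation on the structure sheaves: acting by $g=(\bm\xi,a)$ and then applying $e\in{\rm Gal}$ introduces the Kummer factor $K_{d_w}(e)$ from the $\sqrt[N]{d_w}$ in the multiplicative coordinates and the Artin--Schreier summand $A_{cx_{23}-x_{23}^w}(e)$ from $r(cx_{23}-x_{23}^w)$, which is precisely the definition of $\pi_w$. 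Finally, for (iv), I would apply Lemma~\ref{Comparison of N by isom} to $h_w$ over $\k_{pN}$ to get $N(X_{\Phi_1,\bm\l}\otimes\k_{pN};\pi_w^*(\chi,\rho)) = N(X_{\Phi_1,\bm\l_w}\otimes\k_{pN};(\chi,\rho))$, rewrite $\pi_w^*(\chi,\rho) = (\chi_w,\eta\rho)$ with $\eta = \chi\circ(K_{d_w},A_{cx_{23}-x_{23}^w})$ using \eqref{character*matrices}, descend by \eqref{N of base change}, and evaluate $\eta(F) = \chi(d_w, cx_{23}-x_{23}^w)$ via $K_x(F)=x$ and $A_y(F)=y$.

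The main obstacle, as in the $F_D$ case, is the bookkeeping behind Lemma~\ref{lem for Qsigma of Phi_1} and the simultaneous cancellation of the root factors $\tau_j'=\tau_{w,j}'$ in the multiplicative coordinates while keeping the single Artin--Schreier coordinate aligned. Once the identity $\sum_j(\theta_0 P_\sigma' M+T_j)\rho_j = Q_\sigma$ and the $T_j$-relation are in hand, the remainder is the same formal argument already used for Gauss, Kummer and $F_D$.
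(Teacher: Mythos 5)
Your proposal is correct and takes essentially the same route as the paper, whose proof of this theorem consists precisely of citing the arguments of Theorem \ref{isom between X_FD} (i) for the Fermat part, Lemma \ref{lem for Qsigma of Phi_1} together with $(x_w)_{w'}=x_{ww'}$ for (ii), and Theorem \ref{N of X12} (iii)--(iv) for the Galois-equivariance and point-count statements --- exactly the computations you spell out, including the identity $\sum_{j=0}^{2}(\theta_0 P_\sigma' M + T_j)\rho_j = Q_\sigma$, the cancellation $\tau_j'=\tau_{w,j}'$, and the $\k$-linearity of $r$ giving $ct+r(cx_{23}-x_{23}^w)$. No gaps; your expansion is a faithful unwinding of the paper's cross-references.
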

\begin{proof}
(i) We can prove by the similar argument to the proof of Theorem \ref{isom between X_FD} (i).

(ii) We can prove similarly to Theorem \ref{isom between X22 and X22-sigma} (ii) by Lemma \ref{lem for Qsigma of Phi_1}.

(iii) and (iv) We can prove by the same argument of the proof of Theorem \ref{N of X12} \ref{N of X12 (iii)} and \ref{N of X12 (iv)}.
\end{proof}

\subsection{Humbert's $\Phi_3$ ($d=2, n=5, \D=(1,2,2)$)}
For $X_{\Phi_3,\bm\l}$, we can similar observation as the previous subsection.
Recall $W_\D \cong W(2)^2 \rtimes \S_2$ and $ w\in W_\D$ can be written as 
$$w=(c_1, c_2, \sigma):= \left( \begin{array}{c|c} 1 & \mat{0 & \cdots & 0} \\ \hline \mat{0 \\ \vdots \\ 0} & {\rm diag}(\mu(c_1), \mu(c_2)) \widetilde P_\sigma \end{array}\right) \quad ( c_i \in \k^*, \, \sigma \in \S_2).$$
For $\bm\l = (\l_1, \l_2) \in (\k^*)^2$, fix a matrix $x=(x_{ij}) \in M(2,3;\k)$ such that
$$ \l_1 = \dfrac{x_{11} x_{22}}{x_{21}}, \quad \l_2 = x_{22} x_{13},$$
and put $d_x = (x_{21}, x_{11}, x_{22}, x_{13})$.
For 
$$z = \bmat{x_{11} & 1 & x_{12} & 0 & x_{13} \\ x_{21} & 0 & x_{22} & 1 & x_{23}}$$
and $w = (c_1, c_2, \sigma) \in W_\D$, one shows that $P_\sigma z w$ can be written as
$$ P_\sigma z w = \bmat{x_{11}^w & 1 & x_{12}^w & 0 & x_{13}^w \\ x_{21}^w & 0 & x_{22}^w & 1 & x_{23}^w } \quad (x_{11}^w, x_{21}^w, x_{22}^w, x_{13}^w \neq 0).$$
Put 
$$ x_w = (x_{ij}^w), \quad \l_{w,1} = \dfrac{x_{11}^w x_{22}^w}{x_{21}^w}, \quad \l_{w,2}= x_{22}^w x_{13}^w.$$
We have the decomposition
$$ X_{\Phi_3,\bm\l} \otimes \k_N = \bigsqcup_{\bm\tau^N  = \bm\l} X_{\Phi_3}^{\bm\tau},$$
where
$$  X_{\Phi_3}^{\bm\tau} = \{ (x,y,z_i,t_i) \in  X_{\Phi_3,\bm\l} \otimes \k_N \mid \tau_1 x=yz_1, \tau_2= z_1z_2\}. $$
For $\bm\tau= (\tau_1, \tau_2) \in \{ \bm\tau \mid \bm\tau^N=\bm\l\}$, put $\bm\tau_w = (\tau_{w,1},\tau_{w,2}) \in \{\bm\tau \mid \bm\tau^N=\bm\l_w\}$, where
$$ \tau_{w,1} = \tau_1 \cdot \dfrac{ \sqrt[N]{d_x} *{}^t (1,-1,-1,0)}{\sqrt[N]{d_{x_w}} *{}^t (1,-1,-1,0)}, \quad \tau_{w,2} = \tau_2 \cdot \dfrac{ \sqrt[N]{d_x} *{}^t (0,0,-1,-1)}{\sqrt[N]{d_{x_w}} * {}^t(0,0,-1,-1)}.$$
\begin{rem}
One shows
$$ \bm\tau_w^N = \bm\l_w  = \begin{cases} (c_1 \l_1, c_2\l_2) & (\sigma = {\rm id}) \\ (c_2 \dfrac{\l_2}{\l_1}, c_1c_2\l_2) & (\sigma = (1\, 2)). \end{cases}$$
\end{rem}

For $\sigma \in \S_2$, put
$$Q_\sigma =  \left( \begin{array}{c|c} P_\sigma & O_2 \\ \hline O_2 & P_\sigma \end{array} \right) \in GL_4(\Z).$$
Put $G := (\k^*)^4 \times \k^2$ and ${\rm Gal} = {\rm Gal}(\k_N/\k)$.
\begin{thm}
Let  $ w = (c_1, c_2, \sigma) \in W_\D $ and $\bm\tau \in \{ \bm\tau \mid \bm\tau ^N=\bm\l\}$.
\begin{enumerate}
\item We have the isomorphism
$$ h_w^{\bm\tau} \colon X_{\Phi_3,\bm\l}^{\bm\tau} \longrightarrow X_{\Phi_3,\bm\l_w}^{\bm\tau_w}\, ;\, (x,y,z_i,t_i) \mapsto (x', y', z_i', t_i'),$$
where
$$ (x',y',z_i') = \dfrac{\sqrt[N]{d_{x_w}}}{\sqrt[N]{d_x} * Q_\sigma} ( (x,y,z_i) *Q_\sigma) , \quad (t_1', t_2') = (c_1t_1, c_2t_2) * P_\sigma.$$
By the same correspondence, we have the isomorphism
$$ h_w \colon X_{\Phi_3,\bm\l} \otimes \k_N \longrightarrow X_{\Phi_3,\bm\l_w} \otimes \k_N.$$

\item We have $h_{w'}^{\bm\tau_w} \circ h_{w}^{\bm\tau} = h_{ww'}^{\bm\tau}$ for $w' \in W_\D$.

\item Put $d_w := d_{x_w}/(d_x*Q_\sigma) \in (\k^*)^4$.
Define the automorphism $\pi_w$ of $G \times {\rm Gal}$ by, for $\bm\xi \in (\k^*)^4, \bm a =(a_i)\in \k^2$,
$$ ( \bm\xi, \bm a, e) \mapsto ( K_{d_w}(e) (\bm\xi * Q_\sigma), (a_1c_1, a_2c_2)*P_\sigma, e ).$$
Then, for $(g,e) \in G\times {\rm Gal}$, 
$$ h_w \circ (g,e) = \pi_w(g,e)\circ h_w.$$

\item For $\chi = ( (\a_i)_i, \psi, \psi) \in \widehat{G}$, we have
$$ \chi(d_w, 0, 0) N(X_{\Phi_3,\bm\l};\chi_w) = N(X_{\Phi_3,\bm\l_w};\chi),$$
where $\chi_w := ( (\a_i)*Q_\sigma, \psi_{c_{1}}, \psi_{c_{2}})$.
\end{enumerate}
\end{thm}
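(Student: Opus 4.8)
The plan is to reuse the four-step template from the proofs of Theorem \ref{isom between X22 and X22-sigma} (Gauss), Theorem \ref{isom between X_FD} (Lauricella's $F_D$) and Theorem \ref{N of X12} (Kummer), adapted to $X_{\Phi_3,\bm\l}$. First I would build the auxiliary mixed variety $X_x \subset \P^2 \times \mathbbm{A}^2$ carrying the Fermat relation and the two Artin--Schreier relations read off from $x$, together with the projection $\p \colon X_{\D,z} \to X_x$, the monomial rescaling $\p_{\theta_0}$, and its restriction $\p_{\theta_0}^{\bm\tau}$ to each $\bm\tau$-component, exactly as in the $\Phi_1$ case; on the two Artin--Schreier coordinates $\p_{\theta_0}$ introduces the shifts $r(x_{12})$ and $r(x_{23})$ coming from the two free entries of $x$. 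The compatible isomorphism $g_w \colon X_x \to X_{x_w}$ permutes the homogeneous coordinates according to $\sigma$ and scales the two Artin--Schreier coordinates by $c_1,c_2$; by Theorem \ref{isom fw} and Theorem \ref{isom between X_D} \ref{isom Lg} it satisfies $\p \circ f_w = g_w \circ \p$, and one sets $h_w^{\bm\tau} := (\p_{\theta_0}^{\bm\tau_w})^{-1} \circ g_w \circ \p_{\theta_0}^{\bm\tau}$.

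To prove (i) I would evaluate this composite in coordinates. On the four multiplicative coordinates $(x,y,z_1,z_2)$ the argument copies Theorem \ref{isom between X_FD} (i): the successive projective rescalings telescope, the intermediate factors cancel by the defining relation between $\bm\tau$ and $\bm\tau_w$, and the monomial exponents collapse to the block-permutation matrix $Q_\sigma = \mathrm{diag}(P_\sigma,P_\sigma)$ (the $W(2)$-scalings being absorbed into the overall factor), giving $(x,y,z_1,z_2) \mapsto \frac{\sqrt[N]{d_{x_w}}}{\sqrt[N]{d_x}*Q_\sigma}\big((x,y,z_1,z_2)*Q_\sigma\big)$. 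On the two Artin--Schreier coordinates $g_w$ scales by $(c_1,c_2)$ and permutes by $\sigma$, so each $t_i$ would a priori pick up a shift of the form $c\,r(x_\ast) - r(x_\ast^w)$; the decisive point is that the two free entries scale exactly, $x_{12}^w = c_1 x_{12}$ and $x_{23}^w = c_2 x_{23}$ for $\sigma = \mathrm{id}$ together with the swapped analogues for $\sigma = (1\,2)$, which I read off from $P_\sigma z w$. Since $r$ is additive with $r(cx) = c\,r(x)$, every such shift cancels and $(t_1',t_2') = (c_1 t_1, c_2 t_2)*P_\sigma$ carries no additive correction --- the structural reason $\Phi_3$ is cleaner than the Kummer and $\Phi_1$ cases.

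Part (ii) follows as in Theorem \ref{isom between X22 and X22-sigma} (ii) from $(x_w)_{w'} = x_{ww'}$ and $Q_\sigma Q_{\sigma'} = Q_{\sigma\sigma'}$, the latter being immediate because $Q_\sigma$ is a block-diagonal permutation, so, unlike Lemmas \ref{lem of Q_sigma for FD} and \ref{lem for Qsigma of Phi_1}, no separate matrix lemma is needed. For (iii) I would check $h_w \circ (g,e) = \pi_w(g,e) \circ h_w$ on structure sheaves as in Theorem \ref{N of X12} (iii): the multiplicative coordinates acquire the $K_{d_w}(e)$-twist through the roots $\sqrt[N]{d_{x_w}}/(\sqrt[N]{d_x}*Q_\sigma)$, while the Artin--Schreier coordinates, moving by pure scaling and permutation, acquire no $A_t(e)$-twist, which is exactly why the additive slot of $\pi_w$ carries no Galois dependence. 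Finally (iv) combines (iii) with Lemma \ref{Comparison of N by isom}: by \eqref{character*matrices} one gets $\pi_w^*(\chi,\rho) = (\chi_w, \eta\rho)$ with $\chi_w = ((\a_i)*Q_\sigma, \psi_{c_1}, \psi_{c_2})$ absorbing the $c_i$-scalings and $\eta = \big(\prod_i \a_i\big)\circ K_{d_w}$ carrying only the multiplicative twist; then \eqref{N of base change} and $K_{d_w}(F) = d_w$ give $\eta(F) = \chi(d_w,0,0)$, which is the asserted identity.

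I expect the only genuine obstacle to be the Artin--Schreier bookkeeping in (i): verifying that the residue shifts created by $\p_{\theta_0}$ and removed by $(\p_{\theta_0}^{\bm\tau_w})^{-1}$ annihilate identically. This hinges on the exact scaling behaviour of the free entries $x_{12},x_{23}$ under $P_\sigma z w$ and on the symmetric role of the two $J(2)$-blocks under $\S_2$; once it is in place, parts (ii)--(iv) are formal consequences of the template.
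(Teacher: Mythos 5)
Your proposal is correct, and it proves the right statement, but on part (i) it takes a genuinely more constructive route than the paper. The paper's $\Phi_3$ subsection never actually introduces the auxiliary variety $X_x \subset \P^2\times\mathbbm{A}^2$, the matrices $\theta_0, T_j, \rho_j$, or the maps $\p_{\theta_0}^{\bm\tau}$; it writes down $h_w^{\bm\tau}$ by the explicit formula and disposes of (i) with a direct check that the formula is well-defined, with $(h_w^{\bm\tau})^{-1}=h_{w^{-1}}^{\bm\tau_w}$. That shortcut is available precisely because of the structural feature you isolate: the transformation is a pure monomial/permutation map with no Artin--Schreier shifts, and your verification of this from $P_\sigma z w$ (namely $x_{12}^w=c_1x_{12}$, $x_{23}^w=c_2x_{23}$ for $\sigma={\rm id}$ and $x_{12}^w=c_2x_{23}$, $x_{23}^w=c_1x_{12}$ for the transposition, with $r(cx)=c\,r(x)$) is exactly right, as is your identification of the shifts $r(x_{12})$, $r(x_{23})$ introduced by $\p_{\theta_0}$. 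What your derivation through $(\p_{\theta_0}^{\bm\tau_w})^{-1}\circ g_w\circ \p_{\theta_0}^{\bm\tau}$ buys is an explanation of where the formula comes from and the compatibility with $f_w$ on $X_{\D,z}$ via Theorems \ref{isom fw} and \ref{isom between X_D}; its cost is a detour through $\k_{pN}$, since $r(x_{12}),r(x_{23})\in\k_p$, so you should add one sentence observing that after the cancellation the composite has all coefficients in $\k_N$ --- this matters because the theorem's base field is $\k_N$ and (iii)--(iv) use ${\rm Gal}(\k_N/\k)$, whereas the paper's direct check never leaves $\k_N$. Parts (ii)--(iv) of your argument agree with the paper's: (ii) from $(x_w)_{w'}=x_{ww'}$ and the evident multiplicativity of $Q_\sigma={\rm diag}(P_\sigma,P_\sigma)$ (the paper additionally notes $\sigma\sigma'=\sigma'\sigma$ in $\S_2$, which is needed because the normalization puts $P_\sigma$ on the \emph{left} of $zw$, and that ${}^tQ_\sigma=Q_\sigma$ for (iv)); (iii) on structure sheaves with no $A_t$-twist in the additive slot; (iv) via Lemma \ref{Comparison of N by isom}, \eqref{character*matrices} and \eqref{N of base change}.

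Two small imprecisions to repair. First, your $\eta=\bigl(\prod_i\a_i\bigr)\circ K_{d_w}$ does not typecheck, since $K_{d_w}$ is $\mu_N^4$-valued componentwise; it should read $\eta=\prod_i\bigl(\a_i\circ K_{(d_w)_i}\bigr)$, i.e.\ $\eta=\chi\circ(K_{d_w},0,0)$, after which $\eta(F)=\chi(d_w,0,0)$ as you state. Second, the $\bm\tau$-component bookkeeping you dismiss as "cancelling by the defining relation" is the one place requiring genuine computation when $\sigma=(1\,2)$: the image of a point of $X_{\Phi_3,\bm\l}^{\bm\tau}$ satisfies $\tau_1'x'=y'z_1'$ with $\tau_1'=\dfrac{\sqrt[N]{c_2x_{13}}}{\sqrt[N]{x_{13}}}\cdot\dfrac{\tau_2}{\tau_1}$, i.e.\ the new first multiplier mixes $\tau_1$ and $\tau_2$ (consistently with $\l_{w,1}=c_2\l_2/\l_1$), and matching this $N$th root against the stated $\tau_{w,1}$ uses \emph{both} relations $\tau_1x=yz_1$ and $\tau_2=z_1z_2$, unlike the verbatim $F_D$ pattern. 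The full-variety isomorphism $h_w$ and parts (iii)--(iv) are insensitive to these component labels, so this does not threaten the theorem, but your sketch (like the paper's "easily check") leaves it implicit.
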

\begin{proof}
(i) We can easily check that the morphism is well-defined and $(h_w^{\bm\tau})^{-1} = h_{w^{-1}}^{\bm\tau_w}$.

(ii) We can easily prove since $(x_w)_{w'} = x_{ww'}$, where note that $\sigma \sigma' = \sigma'\sigma$ for $\sigma,\sigma' \in \S_2$.

(iii) and (iv)  We can prove by the same argument of the proof of Theorem \ref{isom between X22 and X22-sigma} (iii) and \ref{cor of isom between X22 and X22-sigma}, where note that ${}^t Q_\sigma = Q_\sigma$.
\end{proof}

\subsection{ Lauricella's $F_A$ ($d \geq 3, n=2d, \D=(1,\dots, 1)$)}\label{subsec. of X_A}
For $\bm\l:=(\l_i) \in (\k^*)^m$, fix a matrix
$$  x = \bmat{ x_{00} & x_{01} &  \cdots & x_{0m} \\ 
                             x_{10} & x_{11} &               & \\
                             \vdots  &            & \ddots &     \\
                             x_{m0} &            & & x_{mm}} \quad (x_{ij}=0\mbox{ if } 1\leq i \neq j \leq m), $$
where
$$ \l_i = \dfrac{x_{0 i} x_{i 0}}{x_{00}{x_{ii}}}.$$
Let $z = \left( \begin{array}{c|c}
x & I_{m+1} 
\end{array} \right) \in M(m+1, 2m+2;\k)$.
When this non-full variable case, the function $\Phi_\D(\chi;z)$ can be written by Lauricella's $F_A^{(m)}(\bm\l)$.
In this subsection, we consider this case.
Let $X_x \subset \P^{2m+1}=\{ (u_i:v_i):=(u_0 : \cdots : u_m : v_0 : \cdots : v_m)\}$ be the projective variety defined by the equation
$$ \begin{cases}
u_0^N = x_{00} v_0^N +\cdots + x_{m0} v_m^N \vspace{3pt} \\
u_i ^N = x_{0i} v_0^N + x_{ii} v_i ^N & (i = 1, \dots, m) \vspace{3pt}\\
u_0 \cdots u_m v_0 \cdots v_m \neq 0. \end{cases}$$
Put $d_x = (x_{00}, \dots , x_{m0}, x_{11} , \dots, x_{mm}, x_{01}, \dots, x_{0m}) \in (\k^*)^{3m+1}$.
Define matrices $\theta_j, T_j \in M(3m+1, 2m+2; \Z)$ and $\rho_j, \rho \in M(2m+2, 3m+1;\Z)$ $(0\leq j \leq  m)$ as follows:
$$\theta_0 = \left( \begin{array}{c|c|c|c}
\mat{0\\\vdots\\0} & {}^t \Lambda & I_m & \mat{0\\\vdots\\0} \\
\hline
-1 & \mat{-1 & \cdots & -1 & 0} & \mat{-1 & \cdots & -1 } & 0 \\
\hline
\mat{0\\\vdots\\0} & -I_m & O_m &\mat{0\\\vdots\\0}\\
\hline
\mat{0\\\vdots\\0} & O_m & O_m & \mat{0\\\vdots\\0}
\end{array}\right), \quad  
T_j := \left( \begin{array}{c|c}
O & \mat{ 1 \\ -e_j \\ e_j \\ -e_j }
\end{array}\right),$$
($\Lambda$ is the shift matrix of size $m$ as in Section 3), 
$ T_0 = O , \quad \theta_j := \theta_0 + T_j$, and $\rho = \sum_{j=0}^m \rho_j$, where
$$\rho_0 = \left( \begin{array}{c|c|c}
-1  \ \cdots \  -1 & 0 \  \cdots \  0 & 0\   \cdots \ 0 \\
\hline 
O & -I_m & -I_m \\
\hline
I_{m+1} & \mat{0 \ \cdots \ 0\\ I_m} & \mat{1\  \cdots \ 1\\ O_m}
\end{array}\right), \quad 
\rho_j = \left( \begin{array}{c|c}
O_{2m+1} & O \\
\hline
0\ \cdots \ 0&  -{}^t e_j
\end{array}\right)\ (j \geq 1).$$
By the similar argument with the case when Lauricella's $F_D$, we have the isomorphism
$$\p_{\theta_0}^{\bm \tau} \colon X_{F_A^m,\bm\l}^{\bm \tau} \longrightarrow X_x \otimes \k_N\, ;\, (x_i, y_i: z_i) \longmapsto ( \sqrt[N]{d_x}^{-1} (x_i:y_i:z_i) )*\theta_0,$$
where $X_{F_A^m,\bm\l}^{\bm \tau} := \{ (x_i, y_i, z_i) \in X_{F_A^m,\bm\l} \otimes \k_N \mid \tau_i x_0 y_i = x_i z_i (\mbox{for all }i) \}$ for $\bm\tau^N = \bm\l$.
The inverse morphism is given by
$$(\p_{\theta_0}^{\bm \tau})^{-1} (u_i:v_i) = \dfrac{\sqrt[N]{d_x}}{\prod_j (1, \dots, 1, \tau_j')*\rho_j} ( (u_i,v_i) * \rho).$$
%{}^t \Lambda =  \bmat{ 0  & & & & \\ 1 & \ddots & & & \\ 0 & \ddots & \ddots & & \\ \vdots & \ddots & \ddots & \ddots & \\ 0 & 0 & \cdots & 1 & 0}.$$

Put $\S' := \langle (2\ m+3), (3\ m+4), \dots , (m+1\ 2m+2) \rangle \subset \S_{2m+2}$.
Suppose that $1 \not\in \{ \sum_j \e_j \l_j \mid \e_j = 0, 1\}$.
Then, for any $\sigma \in \S'$, one shows that the right half submatrix $g$ in $zP_\sigma$ is invertible and 
$$ g^{-1} zP_\sigma = \left( \begin{array}{c|c} x_\sigma & I_{m+1} \end{array}\right), \quad 
x_\sigma = \bmat{ x_{00}^\sigma & x_{01}^\sigma &  \cdots & x_{0m}^\sigma \\ 
                             x_{10}^\sigma & x_{11}^\sigma &               & \\
                             \vdots  &            & \ddots &     \\
                             x_{m0}^\sigma &            & & x_{mm}^\sigma},$$
where $x_{0j}^\sigma, x_{i0}^\sigma, x_{ii}^\sigma \in \k^*$ and the other $x_{ij}^\sigma$ are $0$.
Put $\bm\l_\sigma = (\l_{\sigma,i})$, where
$$ \l_{\sigma,i} = \dfrac{x_{0i}^\sigma x_{i0}^\sigma}{x_{00}^\sigma x_{ii}^\sigma}. $$
\begin{rem}
For $\sigma = (i\  \, m+i+1) \in \S'$, one can easily check that
$$ \l_{\sigma, i-1} = \dfrac{-\l_{i-1}}{1-\l_{i-1}}, \quad \l_{\sigma, j} = \dfrac{\l_j}{1-\l_{i-1}}\ (j \neq i-1).$$
Thus, we have $1 \not\in \{ \sum_j \e_j \l_{\sigma,j} \mid \e_j = 0,1\}$ and it is true for any $\sigma \in \S'$ by the composition and $(x_\sigma)_{\sigma'} = x_{\sigma\sigma'}$.
\end{rem}
Similarly to the previous subsections, we have the isomorphism
$$ g_\sigma \colon X_x \longrightarrow X_{x_\sigma}\, ;\, (u_i : v_i) \longmapsto (u_i:v_i)*P_\sigma.$$
Let $M$ be as in Subsection \ref{subsec. of X_D} of size $2m+2$, and define
$$ Q_\sigma = \sum_{j=0}^m (\theta_0 P_\sigma M + T_j)\rho_j.$$
By the similar argument of the proof of Lemma \ref{lem of Q_sigma for FD}, we have
\begin{equation}
Q_{\sigma}Q_{\sigma'} = Q_{\sigma\sigma'} \quad (\sigma, \sigma' \in \S'), \label{eq. of Qsigma for FA}
\end{equation}
and $Q_\sigma \in GL_{3m+1}(\Z)$.
Put $\bm\tau_\sigma = (\tau_{\sigma,j})$, where 
$$ \tau_{\sigma, j} := \tau_j \cdot \dfrac{\sqrt[N]{d_x} * {}^t (1, -{}^t e_j, {}^t e_j, -{}^t e_j)}{\sqrt[N]{d_{x_\sigma}} * {}^t (1, -{}^t e_j, {}^t e_j, -{}^t e_j)}.$$

The following is a geometric interpretation of well-known transformation formulas for $F_2$ over $\k$ (cf. \cite[Corollary 1.7]{TSB}) and their $m$-variable generalizations.
\begin{thm}
Let $\sigma \in \S'$ and $\bm\tau \in \{ \bm\tau \mid \bm\tau^N = \bm\l\}$.
\begin{enumerate}
\item We have the isomorphism 
$$ h_\sigma^{\bm\tau} \colon X_{F_A^m,\bm\l}^{\bm\tau} \longrightarrow X_{F_A^m,\bm\l_\sigma}^{\bm\tau_\sigma}\, ;\, (x_i,y_j,z_j) \longmapsto \dfrac{\sqrt[N]{d_{x_\sigma}}}{\sqrt[N]{d_{x}}*Q_\sigma}\big((x_i, y_j,z_j)*Q_\sigma \big).$$
Furthermore, we have the isomorphism $h_\sigma \colon X_{F_A^m,\bm\l} \otimes \k_N \rightarrow X_{F_A^m,\bm\l_\sigma} \otimes \k_N$ by the same correspondence.

\item We have $h_{\sigma'}^{\bm\tau_\sigma} \circ h_{\sigma}^{\bm\tau} = h_{\sigma \sigma'}^{\bm\tau}$ for $ \sigma' \in \S' $.

\item Put $d_\sigma = d_{x_\sigma}/ (d_x * Q_\sigma) \in (\k^*)^{3m+1}$. 
Define the automorphism $\pi_\sigma$ of $(\k^*)^{3m+1} \times {\rm Gal}$ by 
$$(g,e) \mapsto (K_{d_\sigma}(e) (g*Q_\sigma), e).$$
Then, for $(g,e) \in (\k^*)^{3m+1} \times {\rm Gal}$, 
$$ h_\sigma \circ (g,e) = \pi_\sigma(g,e) \circ h_\sigma.$$

\item For $\chi \in \widehat{(\k^*)^{3m+1}}$,
$$ \chi(d_\sigma) N(X_{F_A^m,\bm\l} ; \chi * {}^t Q_\sigma) = N(X_{F_A^m,\bm\l_\sigma};\chi).$$
\end{enumerate}
\end{thm}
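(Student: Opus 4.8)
The plan is to follow the proof of Theorem~\ref{isom between X_FD} essentially verbatim, since the present assertion is the $F_A$-analogue of the $F_D$-case: both concern $\D=(1,\dots,1)$, a symmetry group of permutation type, and the extension $\k_N$ (no Artin--Schreier factor occurs, as $X_{F_A^m,\bm\l}$ is built only from Fermat hypersurfaces, so only the $K$-twist will appear). The four statements split into one genuine computation, namely (i), together with three formal consequences (ii)--(iv). Throughout I would use that $\varphi_{\theta_0}^{\bm\tau}$ is already an isomorphism with the inverse recorded above, that $g_\sigma$ is visibly an isomorphism, and that the multiplicativity $Q_\sigma Q_{\sigma'}=Q_{\sigma\sigma'}$ of \eqref{eq. of Qsigma for FA} is available.

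For (i) I would evaluate $h_\sigma^{\bm\tau}=(\varphi_{\theta_0}^{\bm\tau_\sigma})^{-1}\circ g_\sigma\circ\varphi_{\theta_0}^{\bm\tau}$ on a point of $X_{F_A^m,\bm\l}^{\bm\tau}$. Since $\varphi_{\theta_0}^{\bm\tau}$ sends $(x_i,y_i,z_i)$ to $(\sqrt[N]{d_x}^{-1}(x_i:y_i:z_i))*\theta_0$ and $g_\sigma$ is $*P_\sigma$, the composite equals $(\sqrt[N]{d_x}^{-1}(x_i:y_i:z_i))*\theta_0 P_\sigma M$, where $M$ is inserted freely because $*M$ acts trivially on homogeneous coordinates. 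Applying $(\varphi_{\theta_0}^{\bm\tau_\sigma})^{-1}$, which is $*\rho$ normalized by $\prod_j(1,\dots,1,\tau_{\sigma,j}')*\rho_j$, I would use the $T_j$-invariance of points of $X_{F_A^m,\bm\l}^{\bm\tau}$ (the exact analogue of \eqref{eq for Tj}) to insert each $T_j$ into $\theta_0 P_\sigma M$ at the cost of a scalar $(1,\dots,1,\tau_j')$, so that $\theta_0 P_\sigma M\rho$ becomes $Q_\sigma=\sum_{j=0}^m(\theta_0 P_\sigma M+T_j)\rho_j$ up to these scalars. Since $\tau_j'=\tau_{\sigma,j}'$ by the definition of $\bm\tau_\sigma$, the two normalizing products cancel, leaving the asserted formula $\tfrac{\sqrt[N]{d_{x_\sigma}}}{\sqrt[N]{d_x}*Q_\sigma}\big((x_i,y_i,z_i)*Q_\sigma\big)$. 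As this expression is independent of $\bm\tau$, the maps $h_\sigma^{\bm\tau}$ glue over the decomposition $X_{F_A^m,\bm\l}\otimes\k_N=\bigsqcup_{\bm\tau^N=\bm\l}X_{F_A^m,\bm\l}^{\bm\tau}$ into $h_\sigma$.

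Parts (ii)--(iv) are then formal, and I would argue exactly as in Theorem~\ref{isom between X22 and X22-sigma}(ii)--(iv). For (ii) I would substitute the explicit formula twice and invoke $(x_\sigma)_{\sigma'}=x_{\sigma\sigma'}$ together with \eqref{eq. of Qsigma for FA}. For (iii) I would compare $h_\sigma\circ(g,e)$ with $\pi_\sigma(g,e)\circ h_\sigma$ on the structure sheaves; the twist $K_{d_\sigma}(e)$ is precisely the discrepancy produced by the $N$th-root prefactor $\sqrt[N]{d_{x_\sigma}}/(\sqrt[N]{d_x}*Q_\sigma)$, whose image in $\mu_N$ under the Galois action is $d_\sigma$. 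For (iv), combining (iii) with Lemma~\ref{Comparison of N by isom} gives $N(X_{F_A^m,\bm\l}\otimes\k_N;\pi_\sigma^*(\chi,\varrho))=N(X_{F_A^m,\bm\l_\sigma}\otimes\k_N;(\chi,\varrho))$; reading off $\pi_\sigma^*(\chi,\varrho)=(\chi*{}^tQ_\sigma,\eta\varrho)$ with $\eta=\chi\circ K_{d_\sigma}$ via \eqref{character*matrices}, then applying \eqref{N of base change} and $K_{d_\sigma}(F)=d_\sigma$, yields $\chi(d_\sigma)N(X_{F_A^m,\bm\l};\chi*{}^tQ_\sigma)=N(X_{F_A^m,\bm\l_\sigma};\chi)$.

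The main obstacle is the block of integral matrix identities underlying (i) and \eqref{eq. of Qsigma for FA}: one needs $\sum_{j}\theta_j\rho_j=I_{3m+1}$, $\rho\theta_0=I_{2m+2}-(\,O\mid e_{2m+2})$ and $\rho_j\theta_0=O$ for $j\geq1$ (already used to invert $\varphi_{\theta_0}^{\bm\tau}$), together with $M\rho_0\theta_0=M$, $MP_\sigma M=P_\sigma M$ and $T_j\rho_j\theta_0=O$. The bookkeeping is more delicate than in the $F_D$-case because the shift matrix $\Lambda$ in the top block of $\theta_0$ couples adjacent coordinates and because $\S'$ is generated only by the transpositions $(i\ m+i+1)$; the real content is checking that $\theta_0 P_\sigma M$ and the $T_j$ still assemble into an integral matrix $Q_\sigma\in GL_{3m+1}(\Z)$, but this is entirely parallel to Lemma~\ref{lem of Q_sigma for FD}.
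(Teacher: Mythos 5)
Your proposal is correct and follows the paper's own route essentially verbatim: the paper proves (i) by the same computation as Theorem \ref{isom between X_FD} (i), with $h_\sigma^{\bm\tau} := (\p_{\theta_0}^{\bm\tau_\sigma})^{-1}\circ g_\sigma\circ \p_{\theta_0}^{\bm\tau}$, deduces (ii) directly from \eqref{eq. of Qsigma for FA}, and proves (iii) and (iv) by the argument of Theorem \ref{isom between X22 and X22-sigma} (iii) and (iv) via Lemma \ref{Comparison of N by isom}, \eqref{character*matrices}, \eqref{N of base change} and $K_{d_\sigma}(F)=d_\sigma$. The integral matrix identities you flag (the analogues of $\sum_j \theta_j\rho_j = I$, $\rho_j\theta_0=O$, $M\rho_0\theta_0=M$, $MP_\sigma M=P_\sigma M$, $T_j\rho_j\theta_0=O$) are precisely what the paper invokes implicitly when it says the arguments of Theorem \ref{isom between X_FD} and Lemma \ref{lem of Q_sigma for FD} carry over, so nothing is missing.
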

\begin{proof}
(i) We can prove by the same argument of the proof of Theorem \ref{isom between X_FD} (i), where $h_{\sigma}^{\bm \tau} := (\p_{\theta_0}^{\bm \tau_\sigma})^{-1} \circ g_\sigma \circ \p_{\theta_0}^{\bm \tau}$.

(ii)  This is clear by \eqref{eq. of Qsigma for FA}.

(iii) and (iv) We can prove by the same argument of the proof of Theorem \ref{isom between X22 and X22-sigma} (iii) and \ref{cor of isom between X22 and X22-sigma}.
\end{proof}

\begin{rem} Let us consider the reducible case $1 \in \{ \l_1, \l_2, \l_1+\l_2\}$.
Then, recall that Appell's $F_2 = F_A^{(2)}$ can be reduced to ${}_3F_2$-function (cf. \cite[Theorem 3.13]{N2}).
We can obtain a geometric interpretation of these reduction formulas.
We have 
$$ X_{F_2,(\l,1)} = \bigsqcup_{a \in \k^*} X_{F_2,(\l,1)}^a,$$
where $ X_{F_2,(\l,1)}^a := \left\{ (x_i,y_j,z_j) \in X_{F_2,(\l,1)} \middle| \, ax_0y_2=x_2z_2 \right\}.$
We have the isomorphism
$$ i^a \colon {}_3X_{3,\l}\otimes \k_N \longrightarrow X_{F_2,(\l,1)}^a\otimes \k_N \, ;\, (x_i,y_i) \longmapsto \sqrt[N]{d}^a ( (x_i,y_i)*Q),$$
where $d := (-1, 1, 1, 1, 1, 1, -1)$, $\sqrt[N]{d}^a := (\sqrt[N]{-1}, 1, 1, 1, 1, 1, a \sqrt[N]{-1})$ and 
$$Q := \bmat{1&0&1&0&0&0&0\\ 0&0&0&1&0&0&0 \\ 1&0&0&0&0&0&1 \\ 0&1&0&0&0&0&0 \\ 0&0&0&0&0&1&0 \\ -1&0&-1&0&-1&0&-1}.$$
The subgroup 
$$G' = \{ ( (\xi_i)_{i=0}^2, (\zeta_j)_{j=1}^2, (\zeta_j')_{j=1}^2) \in (\k^*)^7 |\, \xi_0\zeta_2=\xi_2\zeta_2' \} \subset (\k^*)^7$$
naturally acts on $X_{F_2,(\l,1)}^a$.
Recall that the group $G := (\k^*)^6$ acts on ${}_3X_{3,\l}$.
Define an isomorphism $ \pi \colon G \times {\rm Gal} \rightarrow G' \times {\rm Gal}$ by $ (g,e) \mapsto (K_d(e)(g*Q), e)$, then, we have $i^a  \circ (g,e) = \pi(g,e)\circ i^a$ for any $(g,e) \in G \times {\rm Gal}$.
Thus, by Lemma \ref{Comparison of N by isom}, we have, for $\chi \in (\k^*)^7$ and $\rho \in \widehat{\rm Gal}$, 
\begin{align*}
N(X_{F_2,(\l,1)} \otimes \k_N ; (\chi,\rho) ) 
& = \dfrac{1}{N} \sum_{a \in \k^*} N(X_{F_2,(\l,1)}^a \otimes \k_N ; (\chi|_{G'},\rho))\\
& = N({}_3X_{3,\l} \otimes \k_N ; \pi^*(\chi|_{G'}, \rho)).
\end{align*}
Since
$$ \pi^*(\chi|_{G'},\rho) = (\chi*{}^t Q, \eta \rho) \quad (\eta := \chi \circ K_d),$$
we have, by \eqref{N of base change}, 
$$N(X_{F_2,(\l,1)}; \chi) = \chi(d) N({}_3X_{3,\l} ; \chi*{}^tQ).$$
If we put $\chi=(\a, \e,\e, \b_1,\b_2, \ol{\c_1}, \ol{\c_2})$, then $\chi*{}^t Q = (\a,\b_1,\a\ol{\c_2}, \e, \ol{\c_1}, \ol{\a\b_2}\c_2)$, and hence the identity above is equivalent to the reduction formula \cite[Theorem 3.13 (i)]{N2} by Theorems \ref{N of X_Lauricella} \ref{N of X_Lauricella-A} and \ref{N of X_mn}.

When $\l_1=1$ and $\l_1+\l_2=1$, we can similar observations.
\end{rem}

\section*{Acknowledgements}
The author would like to thank Noriyuki Otsubo for his useful discussions and suggestions, and to thank Fang-Ting Tu and Ryojun Ito for their helpful comments.
Special thanks to Noriyuki Otsubo who suggested how to construct varieties related with confluent type functions to the author.
The author would also like to thank Atsuhira Nagano and Shunya Adachi for their helpful comments about general hypergeometric functions and confluent hypergeometric functions over the complex numbers.
This work was supported by JST FOREST Program (JPMJFR2235).

\end{document}